\numberwithin{equation}{section}
\providecommand{\env@tikzpicture@save@env}{}
\providecommand{\env@tikzpicture@process}{}
\definecolor{myred}{rgb}{0.75,0,0}
\definecolor{mygreen}{rgb}{0,0.5,0}
\definecolor{myblue}{rgb}{0,0,0.65}
  \def\hg{{\mathfrak h}}
    \def\RM{{\mathbb{R}}}
    \def\ZM{{\mathbb{Z}}}
    \def\FC{{\mathcal{F}}}
  \def\yb{{\mathbf y}}
\def\HS{{\EuScript H}}
\def\a{\alpha}
\def\b{\beta}
\def\g{\gamma}
\def\d{\delta}
\def\e{\varepsilon}
\def\l{\lambda}
\def\r{\rho}
\def\s{\sigma}
\def\z{\zeta}
\newcommand{\nc}{\newcommand} \newcommand{\renc}{\renewcommand}
\newcommand{\rdots}{\mathinner{ \mkern1mu\raise1pt\hbox{.}
    \mkern2mu\raise4pt\hbox{.}
    \mkern2mu\raise7pt\vbox{\kern7pt\hbox{.}}\mkern1mu}}
\def\un{\underline}
\def\to{\rightarrow}
\def\longto{\longrightarrow}
\nc{\triright}{\stackrel{[1]}{\to}}
\nc{\longtriright}{\stackrel{[1]}{\longto}}
\nc{\Hb}{H^\bullet}
\nc{\Br}{\mathcal{B}}
\nc{\HotRR}{{}_R\mathcal{K}_R}
\nc{\HotR}{\mathcal{K}_R}
\nc{\excise}[1]{}
\nc{\h}[1]{\underline{H}_{#1}}
\nc{\Ga}{\mathbb{G}_a} 
\nc{\Gm}{\mathbb{G}_m} 
\nc{\Perv}{{\mathbf{P}}}
\nc{\IH}{{\mathrm{IH}}}
\nc{\ic}{\mathbf{IC}}
\nc{\gl}{{\mathfrak{gl}}}
\renc{\sl}{{\mathfrak{sl}}}
\renc{\sp}{{\mathfrak{sp}}}
\renc{\Im}{\textrm{Im}}
\nc{\HBM}{H^{BM}}
 \DeclareMathOperator{\Hom}{Hom}
\DeclareMathOperator{\End}{End} 
\DeclareMathOperator{\id}{id}
\newtheorem{thm}{Theorem}[section]
\newtheorem{lem}[thm]{Lemma}
\newtheorem{prop}[thm]{Proposition}
\newtheorem{cor}[thm]{Corollary}
\newtheorem{claim}[thm]{Claim}
\theoremstyle{definition}
\newtheorem{defi}[thm]{Definition}
\theoremstyle{remark}
\newtheorem{remark}[thm]{Remark}
\nc{\simto}{\stackrel{\sim}{\to}}
\nc{\SD}{\mathbb S \mathcal D}
\DeclareMathOperator{\defect}{df}
\DeclareMathOperator{\Kar}{Kar}
\title{Soergel calculus and Schubert calculus}
\author{Xuhua He}
\address{Department of Mathematics, University of Maryland, College Park, MD 20742, USA and department of Mathematics, HKUST, Hong Kong}
\email{xuhuahe@math.umd.edu}
\author{Geordie Williamson}
\address{Max-Planck-Institut f\"ur Mathematik, Vivatsgasse 7, 53111,
  Bonn, Germany.}
\email{geordie@mpim-bonn.mpg.de}
\begin{document}

\begin{abstract} We reduce some key calculations of compositions of morphisms between
  Soergel bimodules (``Soergel calculus'')
  to calculations in the nil Hecke ring (``Schubert calculus''). This
  formula has several applications in modular representation
  theory.
\end{abstract}

\maketitle

\section{Introduction}

Determining the modular irreducible characters of the symmetric groups and of
simple groups of Lie types is a major open problem in representation
theory. Recently it has been discovered that
there are certain hidden monoidal categories lurking ``behind'' or
``inside'' these categories of
representations. The two most
prominent examples of such categories are the
Khovanov-Lauda-Rouquier categorifications of modules over Kac-Moody Lie
algebras and module categories for Soergel bimodules (which
categorify modules for Weyl groups or their associated Hecke algebras).

In this paper we concentrate on the monoidal category of Soergel
bimodules. Although the definition of the category of
Soergel bimodules is elementary (as a full subcategory of bimodules
over a polynomial ring), calculations can be prohibitively
difficult. Performing such calculations is important, as they often
provide a means of calculating an irreducible character or a
decomposition number. (This is a theorem for rational
representations of algebraic groups via results of
Soergel \cite{So00} and Fiebig \cite{Fie11}. A conjecture of Riche and the second
author \cite{RW} would imply that these calculations also gives all
characters of tilting modules for algebraic groups, and hence all
decomposition numbers for symmetric groups.)

Progress on the problem of performing calculations with Soergel
bimodules has been made by Libedinsky \cite{Li08}, Elias-Khovanov \cite{EK}, Elias
\cite{EliasCathedral} and
Elias-Williamson \cite{EW2} which culminates in a presentation of the monoidal
category of Soergel bimodules by generators and relations. The
description is diagrammatic and has led to progress on
understanding Soergel bimodules and its module categories.
Though much simpler than calculations in
bimodules, these diagrammatic calculations can still be very difficult.

In particular, it is desirable to find additional simplifications that
make calculations more feasible. In this paper we take the first step
in this direction: we show how to calculate a subset of certain constants governing
the behaviour  of Soergel bimodules (entries in ``intersection
forms'') by a simple formula in the nil Hecke ring (see Theorem \ref{thm:main}). This is a significant
simplification, and we believe our formula will have other
applications. In the final section of this paper we show that our
formula can be used to easily rederive interesting examples
discovered by Kashiwara-Saito and Braden. In \cite{W2} the
second author uses this formula to construct many more such examples, and
deduces that the exceptional characteristics occurring in Lusztig's
conjectured character formula grow exponentially in the rank.

The authors have the optimistic hope that one should be able to reduce
all essential calculations amongst Soergel bimodules to calculations
in the nil Hecke ring. That this is in principle possible is evidenced
by the work of Dyer. Our goal is explicit formulas that
allow us to calculate the characters of the indecomposable Soergel
bimodules algorithmically. This paper can be seen as a first step in
this direction.

\subsection{Acknowledgements} We would like to thank
Ben Elias, Thorge Jensen, Nicolas Libedinsky and Britta Sp\"ath for useful remarks.

\section{Background}

\subsection{} Let $S$ be a finite set and $(m_{s t})_{s, t \in S}$ be a matrix with entries in $\mathbb N \cup \{\infty\}$ such that $m_{s s}=1$ and $m_{s t}=m_{t s} \ge 2$ for all $s \neq t$. Let $W$ be a group generated by $S$ with relations $(s t)^{m_{s t}}=1$ for $s, t \in S$ with $m_{s, t}< \infty$. We say that $(W, S)$ is a {\it Coxeter system} and $W$ a {\it Coxeter group}. The Coxeter group $W$ is equipped with the length function $\ell: W \to \mathbb N$ and the Bruhat order $\le$. 

An {\it expression} is a finite sequence of elements of $S$. We denote by $Ex(S)=\sqcup_{i \in \mathbb N} S^{ i}$ the set of all expressions of $(W, S)$. Let $\un{w}=(s_1, s_2, \cdots, s_m) \in Ex(S)$. The length $\ell(\un{w})$ of $\un{w}$ is $m$. A {\it subexpression} of $\un{w}$ is a sequence $(s_1^{e_1}, s_2^{e_2}, \cdots, s_m^{e_m})$, where $e_i \in \{0, 1\}$ for all $i$. We call $\un{e}$ the associated {\it $01$-sequence} and simply write $(s_1^{e_1}, s_2^{e_2}, \cdots, s_m^{e_m})$ as $\un{w}^{\un{e}}$. 

The group multiplication gives a natural map 
\[
Ex(S) \to W, \qquad \un{w} \mapsto \un{w}_{\bullet},
\]
where $\un{w}_{\bullet}=s_1 s_2 \cdots s_m$ for $\un{w}=(s_1, s_2,
\cdots, s_m)$. Notice that $\ell(\un{w}_{\bullet}) \le
\ell(\un{w})$. If equality holds, then we call $\un{w}$ a {\it reduced
  expression}.

Given a subexpression $\un{w}^{\un{e}}$ of $\un{w}$ 
we set $(\un{w}^{\un{e}})_\bullet = s_1^{e_1}s_2^{e_2} \dots s_m^{e_m}$.

\subsection{} Now we recall the Demazure product.  \label{demprod}

Let $x, y \in W$. By \cite[Lemma 1]{He08}, the set $\{u v; u \le x, v \le y\}$ contains a unique maximal element. We denote this element by $x*y$ and call it the {\it Demazure product} of $x$ and $y$. Then 
\[
\{u v; u \le x, v \le y\}=\{w \in W; w \le x*y\}.
\]

In particular, if $x' \le x$ and $y' \le y$, then $(x')*(y') \le x*y$. 

The operator $*$ gives a monoidal structure on $W$. This monoidal structure gives another natural map 
\[
Ex(S) \to W, \qquad \un{w} \mapsto \un{w}_*,
\]
where $\un{w}_*=s_1*s_2*\cdots*s_m$ for $\un{w}=(s_1, s_2, \cdots, s_m)$. 

By definition, $(\un{w}^{\un{e}})_\bullet \le \un{w}_*$ for any $01$-sequence $\un{e}$. In particular $\un{w}_{\bullet} \le \un{w}_*$. 

\subsection{} Given an expression $\un{w}=(s_1, s_2, \cdots, s_m)$, a $01$-sequence $\un{e}=(e_1, e_2, \cdots, e_m)$ and $0 \le k \le m$, 
we set $\un{w}_{\le k}=(s_1, \cdots, s_k)$, $\un{e}_{\le k} =(e_1, \cdots, e_k)$ and $w_k=(\un{w}_{\le k}^{\un{e}_{\le k}})_{\bullet}$. 

We define the decorated sequence $(d_1 e_1, d_2 e_2, \cdots, d_m e_m)$ associated to $(\un{w}, \un{e})$. Here $d_i \in \{U, D\}$ is the decoration to $e_i$ (U stands for ``Up'' and D stands for ``Down''). The decoration is defined as follows. For any $i$, 
\[
d_i = \begin{cases} U &\text{if $w_{i-1}s_i > w_{i-1}$,} \\
D &\text{if $w_{i-1}s_i < w_{i-1}$.} \end{cases}
\]
It will often be convenient to view $\un{e}$ as the string
\[(d_1e_1,d_2e_2, \dots, d_me_m)\] in the symbols $U0, U1, D0, D1$. As the decoration is
determined by $\un{e}$ and $\un{w}$, this is no more information. The
\emph{defect} of $\un{e}$ is defined to be
\[ \defect(\un{e}) := \sharp\{i; d_i e_i=U0\}-\sharp\{i; d_i e_i=D0\}.\]

\subsection{} The Hecke algebra $\HS$ of $W$ is the free $\mathbb Z[v, v^{-1}]$-algebra with basis $H_w$ for $w \in W$ and multiplication given by 
\begin{gather*}
H_x H_y=H_{x y}, \qquad \text{ if } \ell(x y)=\ell(x)+\ell(y); \\
(H_s+v) (H_s-v^{-1})=0, \qquad \text{ for } s \in S. 
\end{gather*}

For $\un{w}=(s_1, \cdots, s_m) \in Ex(S)$, we set $H_{\un{w}}=H_{s_1}
\cdots H_{s_m}$. Then $H_{\un{w}}=H_{\un{w}_\bullet}$ if $\un{w}$ is a
reduced expression. 

The Hecke algebra $\HS$ has a $\mathbb Z$-linear {\it bar involution}
$\bar \, : \HS \to \HS$ sending $v$ to $v^{-1}$ and $H_w$ to
$H_{w^{-1}}^{-1}$ for all $w \in W$. For any $w \in W$, the
Kazhdan-Lusztig element $\un{H}_w$ is the unique bar-invariant
(i.e. $\overline{h} = h$) element in $H_w+\sum_{x<w} v \mathbb Z[v] H_x$. The elements $\{\un{H}_w \mid w \in W\}$ form a basis of $\HS$. 

For $\un{w}=(s_1, \cdots, s_m) \in Ex(S)$, we set $\un{H}_{\un{w}}=\un{H}_{s_1}
\cdots \un{H}_{s_m}$. 


\subsection{} \label{real}
We fix a realization (in the sense of \cite[\S 3.1]{EW2}) $\hg$
of $W$ over a commutative ring $\Bbbk$. Recall that this consists of a
free and finitely generated $\Bbbk$-module $\hg$ together with subsets
\[
\{ \a_s \}_{s \in S} \subset \hg^* \quad \text{and} \quad \{ \a^\vee_s \}_{s \in S} \subset \hg
\]
of ``roots'' and ``coroots'' such that $\langle \alpha_s,
\alpha_s^\vee \rangle = 2$ for all $s \in S$ and 
  the formulas (for $s \in S$ and
$v \in \hg$)
\[
s(v) := v - \langle \alpha_s , v \rangle \alpha^\vee_s
\]
define an action of $W$ on $\hg$.

For simplicity in this paper we will assume that one of the following
two assumptions is satisfied:
\begin{enumerate}
\item $\Bbbk = \RM$ and $\hg$ is the geometric representation of $W$
  (defined for example in \cite[\S5.3]{Hu});
\item $\hg$ is obtained by extension of scalars from a realization
  defined over $\ZM$ for which the matrix $(\langle \alpha_s^\vee,
  \alpha_t \rangle)_{s, t \in S}$ is a generalized Cartan matrix (in
  the sense of \cite[Chapters 1 \& 3]{Ka}).
\end{enumerate}
Additionally, we assume that the maps
$\alpha_s : \hg \to \Bbbk$ and $\alpha_s^\vee : \hg^* \to \Bbbk$ are
surjective. (This condition is called \emph{Demazure surjectivity} in
\cite{EW2}. It is automatic if $2$ is invertible in $\Bbbk$.)
 
\begin{remark}
  It is possible (and can be interesting, see \cite{EliasSatake}) to consider more
  general realizations, however this can introduce extra
  subtleties. For example one may not have a good notion of positive
  roots and Demazure operators
  need not satisfy the braid relations. It is for this reason that
  we make the assumptions above.
\end{remark}

We denote by $R =S(\hg^*)$ the symmetric algebra of $\hg^*$ over
$\Bbbk$. We view $R$ as a graded $\Bbbk$-algebra with $\deg \hg^*  =
2$. Because $W$ acts on $\hg^*$, it also acts on $R$ by functoriality.
For any $s \in S$ we define the \emph{Demazure operator} $\partial_s :
R \to R[-2]$ by
\[
\partial_s(f) = \frac{f - sf}{\alpha_s}.
\]
(This is well defined under our assumptions on $\hg$, see \cite[\S
3.3]{EW2}.)

\subsection{} \label{subsec:diags}
An {\it $S$-graph} is a finite, decorated, planar
graph with boundary properly embedded in the planar strip $\mathbb R \times [0,1]$
whose edges are colored by $S$ and all of
whose vertices are of the following types:
\begin{enumerate}
\item univalent vertices (``dots''):
 $ \begin{array}{c}
\tikz[scale=0.5]{\draw[dashed] (3,0) circle (1cm);
\draw[color=blue] (3,-1) to (3,0);
\node[circle,fill,draw,inner sep=0mm,minimum size=1mm,color=blue] at (3,0) {};}
\end{array}$
\item trivalent vertices:
  $\begin{array}{c}
\tikz[scale=0.5]{\draw[dashed] (0,0) circle (1cm);
\draw[color=blue] (-30:1cm) -- (0,0) -- (90:1cm);
\draw[color=blue] (-150:1cm) -- (0,0);}
\end{array}$
\item $2m_{st}$-valent vertices:
$\begin{array}{c}\tikz[scale=0.5,rotate=-11]{\draw[dashed] (0,0) circle (1cm);
\draw[color=blue] (0,0) -- (22.5:1cm);
\draw[color=blue] (0,0) -- (67.5:1cm);
\draw[color=blue] (0,0) -- (112.5:1cm);
\draw[color=blue] (0,0) -- (157.5:1cm);
\draw[color=blue] (0,0) -- (-22.5:1cm);
\draw[color=blue] (0,0) -- (-67.5:1cm);
\draw[color=blue] (0,0) -- (-112.5:1cm);
\draw[color=blue] (0,0) -- (-157.5:1cm);
\draw[color=red] (0,0) -- (0:1cm);
\draw[color=red] (0,0) -- (45:1cm);
\draw[color=red] (0,0) -- (90:1cm);
\draw[color=red] (0,0) -- (135:1cm);
\draw[color=red] (0,0) -- (180:1cm);
\draw[color=red] (0,0) -- (-45:1cm);
\draw[color=red] (0,0) -- (-90:1cm);
\draw[color=red] (0,0) -- (-135:1cm);
}\end{array}$
\\
(We require that there are exactly
$2m_{st}$ edges originating from the vertex, and that they are
alternately colored $s$ and $t$ around the vertex. For example, the
pictured example has $m_{{\color{red}s}{\color{blue}t}} = 8$.)
\end{enumerate}
Additionally any
$S$-graph may have its regions (the connected components of the complement
of the graph in $\mathbb R \times [0,1]$) decorated by boxes
containing homogenous elements of $R$.

Here is an example of an $S$-graph (with $m_{{\color{blue}s},
  {\color{red}t}} = 5$, $m_{{\color{blue}s},
  {\color{green}u}} = 2$, $m_{{\color{green}u},
  {\color{red}t}} = 3$):
\[
\begin{tikzpicture}[scale=0.8]
  \coordinate (b) at (2.7,1);
  \coordinate (a) at (2,2);
\coordinate (c) at (3.7,2.5);
  \coordinate (d) at (0.5,3);
\coordinate (dd) at (-0,2);
  \coordinate (e) at (3,3.5);
  \coordinate (ed) at (3.5,3);
\coordinate (f) at (4.8,3.4);
\coordinate (z0) at (0.6,2.3);
\coordinate (z1) at (0.6,1.8);

\node[rectangle,draw,scale=0.8] at (1.5,0.5) {$f_1$};
\node[rectangle,draw,scale=0.8] at (4.8,2.5) {$f_2$};

\draw[blue] (0,0) to[out=90,in=-162] (a);\draw[blue] (2,0) to (a);
\draw[blue] (4,0) to[out=90,in=-18] (a);  
\draw[blue] (a) to[out=126,in=-90] (1,4);\draw[blue] (a) to[out=54,in=-120] (e);
\draw[blue] (e) to (3,4);
\draw[blue] (ed) to[out=120, in=-60] (e);
 \node[circle,fill,draw,inner
      sep=0mm,minimum size=1mm,color=blue] at (ed) {};

\draw[red] (1,0) to [out=90,in=-126] (a) to[out=162,in=-60] (d) to[out=90,in=-90] (0,4);
\draw[red] (2.5,0) to[out=90,in=-120] (b) to[out=90,in=-54] (a) to[out=90,in=-90] (2,4);
\draw[red] (3,0) to[out=90,in=-60] (b);
\draw[red] (a) to[out=18,in=-150] (c) to[out=90,in=-90] (4,4);
\draw[red] (5,0) to[out=90,in=-30] (c);
\draw[red] (dd) to[out=60, in=-120] (d);
 \node[circle,fill,draw,inner
      sep=0mm,minimum size=1mm,color=red] at (dd) {};

\draw[green] (4.5,0) to[out=90,in=-90] (c) to[out=150,in=-90] (2.5,4);
\draw[green] (c) to[out=30,in=-120] (f) to[out=90,in=-90] (4.7,4);
\draw[green] (f) to (5.5,4);
\draw[green] (z0) to (z1);
 \node[circle,fill,draw,inner
      sep=0mm,minimum size=1mm,color=green] at (z0) {};
 \node[circle,fill,draw,inner
      sep=0mm,minimum size=1mm,color=green] at (z1) {};
\draw[blue] (5.5,1) to[out=135,in=-90] (5,1.5) to[out=90,in=180] (5.5,2) to[out=0,in=90] (6,1.5)
to[out=-90,in=45] (5.5,1) to[out=-90,in=90] (5.5,0.5);
 \node[circle,fill,draw,inner
      sep=0mm,minimum size=1mm,color=blue] at (5.5,0.5) {};
\end{tikzpicture}
\]
where $f_i \in R$ are homogenous polynomials.
We define the \emph{degree} of an $S$-graph as the sum over the degrees of
its vertices and boxes, where each box has degree equal to the degree of the
corresponding element of $R$, and the vertices have degrees given by
the following rule: dots have degree 1, trivalent vertices have degree
-1 and $2m_{st}$-valent vertices have degree 0. For example, the degree
of the $S$-graph above is $-1 + 1 -1 + 1 -1 -1 + 1 + 1+ \deg f_2 + \deg f_2 =
\deg f_1 + \deg f_2$.

 The boundary points of any $S$-graph on $\mathbb R \times\{0\}$
 and on $\mathbb R \times \{1\}$ gives two sequences of colored points, and
hence two elements in $Ex(S)$. We call these two sequences the
\emph{bottom boundary} and \emph{top boundary}. For example, the
bottom (resp. top) boundary of the $S$-graph above is $(s,t,s,t,t,s,u,t)$
(resp. $(t,s,t,u,s,t,u,u)$).

\subsection{} We now define the diagrammatic category of Soergel
bimodules. Much greater detail and generality can be found in
\cite{EliasCathedral, EW2}. The important case of $W$ of type $A$ is
discussed in detail in \cite{EK}. Our intention is to give the reader a
 summary.

 Let $\SD$ be the monoidal category whose objects are
$\un{w} \in Ex(S)$. For any $\un{x}, \un{y} \in Ex(S)$,
$\Hom_{\SD}(\un{x}, \un{y})$ is defined to be the free $R$-module
generated by isotopy classes of $S$-graphs with bottom boundary $\un{x}$ and top boundary
$\un{y}$, modulo the local relations below. The structure of a
monoidal category is induced by horizontal and vertical concatenation
of diagrams.

Here are the relations. We use the coloring ${\color{red}s}$ and
{\color{blue}$t$}.

\subsubsection{Frobenius unit.}\label{2.7.1}
\begin{gather*}
  \begin{array}{c}
    \tikz[scale=0.7]{\draw[dashed] (0,0) circle (1cm);
\draw[color=red] (-90:1cm) -- (0,0) -- (90:1cm);
\draw[color=red] (-0:0.5cm) -- (0,0);
\node[circle,fill,draw,inner sep=0mm,minimum size=1mm,color=red] at (-0:0.5cm) {};
}
  \end{array}
=
  \begin{array}{c}
    \tikz[scale=0.7]{\draw[dashed] (0,0) circle (1cm);
\draw[color=red] (-90:1cm) -- (0,0) -- (90:1cm);}
  \end{array}.
\end{gather*}
 
\subsubsection{Frobenius associativity.}\label{2.7.2}
\begin{gather*}
  \begin{array}{c}
    \tikz[scale=0.7]{\draw[dashed] (0,0) circle (1cm);
\draw[color=red] (-45:1cm) -- (0.3,0) -- (45:1cm);
\draw[color=red] (135:1cm) -- (-0.3,0) -- (-135:1cm);
\draw[color=red] (-0.3,0) -- (0.3,0);
}
  \end{array}
=
  \begin{array}{c}
    \tikz[scale=0.7,rotate=90]{\draw[dashed] (0,0) circle (1cm);
\draw[color=red] (-45:1cm) -- (0.3,0) -- (45:1cm);
\draw[color=red] (135:1cm) -- (-0.3,0) -- (-135:1cm);
\draw[color=red] (-0.3,0) -- (0.3,0);
}
  \end{array}.
\end{gather*}

\subsubsection{Needle relation.}
\begin{gather*}
  \begin{array}{c}
    \begin{tikzpicture}[scale=0.7]
      \draw[dashed] (0,0) circle (1cm);
      \draw[red] (0,0) circle (0.6cm);
      \draw[red] (0,0.6) --(0,1);
\draw[red] (0,-0.6) --(0,-1);
    \end{tikzpicture}
  \end{array}= 0.
\end{gather*}
\subsubsection{Barbell relation.}
\begin{gather*}
  \begin{array}{c}
    \tikz[scale=0.7]{\draw[dashed] (0,0) circle (1cm);
\draw[color=red] (0,-0.6) -- (0,0.6);
\node[circle,fill,draw,inner sep=0mm,minimum size=1mm,color=red] at (0,0.6) {};
\node[circle,fill,draw,inner sep=0mm,minimum size=1mm,color=red] at (0,-0.6) {};
}
  \end{array}
=
  \begin{array}{c}
    \tikz[scale=0.7,rotate=90]{\draw[dashed] (0,0) circle (1cm);
\draw (-0.5,-0.5) rectangle (0.5,0.5);
\node at (0,0) {$\alpha_{\color{red} s}$};
}
  \end{array}.
\end{gather*}

\subsubsection{Nil Hecke relation.} \label{eq:nilHecke}
\begin{gather*}
  \begin{array}{c}
    \begin{tikzpicture}[scale=0.7]
      \draw[dashed] (0,0) circle (1cm);
      \draw[red] (0,1) --(0,-1);
\draw (0.2,-0.4) rectangle (0.7,0.4);
\node at (0.4,0) {$f$};
    \end{tikzpicture}
  \end{array}= 
  \begin{array}{c}
    \begin{tikzpicture}[scale=0.7]
      \draw[dashed] (0,0) circle (1cm);
      \draw[red] (0,1) --(0,-1);
\node at (-0.4,0) {${\color{red} s}f$};
\draw (-0.1,-0.4) rectangle (-0.7,0.4);
    \end{tikzpicture}
  \end{array}
+ 
  \begin{array}{c}
    \begin{tikzpicture}[scale=0.7]
      \draw[dashed] (0,0) circle (1cm);
      \draw[red] (0,1) --(0,0.6);\node[circle,fill,draw,inner
      sep=0mm,minimum size=1mm,color=red] at (0,0.6) {}; 
      \draw[red] (0,-1) --(0,-0.6);\node[circle,fill,draw,inner
      sep=0mm,minimum size=1mm,color=red] at (0,-0.6) {}; 
\node at (0,0) {$\partial_{\color{red} s}f$};
\draw (-0.5,-0.4) rectangle (0.5,0.4);
    \end{tikzpicture}
  \end{array}
\end{gather*}
(See \S \ref{real} for the definition of $\partial_s$.)

\subsubsection{Two-color associativity.}\label{2.7.6} We give the first three cases i.e.
$m_{{\color{red}s}{\color{blue}t}} =2, 3, 4$. The reader
can probably guess the general form (see
\cite{EliasCathedral} (6.12) for all the details).

$m_{st} = 2$ (type $A_1 \times A_1$):
\begin{gather*}
  \begin{array}{c}
    \begin{tikzpicture}
      \draw[dashed] (0,0) circle (1cm);
\draw[red] (-45:1) -- (135:1);
\draw[blue] (-135:1) -- (45:0.3);
\draw[blue] (45:0.3) -- (20:1);
\draw[blue] (45:0.3) -- (80:1); 
    \end{tikzpicture}
  \end{array}
=
  \begin{array}{c}
    \begin{tikzpicture}
      \draw[dashed] (0,0) circle (1cm);
\draw[red] (-45:1) -- (135:1);
\draw[blue] (-135:1) -- (-135:0.4);
\draw[blue] (-135:0.4) -- (20:1);
\draw[blue] (-135:0.4) -- (80:1); 
    \end{tikzpicture}
  \end{array}
\end{gather*}

$m_{st} = 3$ (type $A_2$):
\[ \begin{array}{c}
  \begin{tikzpicture}
      \draw[dashed] (0,0) circle (1cm);
\coordinate (a1) at (30:1);\coordinate (a2) at (60:1);\coordinate (a3) at (90:1);
\coordinate (a4) at (135:1);\coordinate (a5) at (-135:1);\coordinate
(a6) at (-90:1); \coordinate (a7) at (-45:1);

\coordinate (l1) at (45:0.4); \coordinate (l0) at (0:0);
\draw[red] (a5) -- (l0) -- (a3);
\draw[red] (l0) -- (a7);

\draw[blue] (a1) -- (l1) -- (a2);
\draw[blue] (l1) -- (l0) -- (a4);
\draw[blue] (l0) -- (a6);
  \end{tikzpicture}
\end{array}
= 
\begin{array}{c}
  \begin{tikzpicture}
      \draw[dashed] (0,0) circle (1cm);
\coordinate (a1) at (30:1);\coordinate (a2) at (60:1);\coordinate (a3) at (90:1);
\coordinate (a4) at (135:1);\coordinate (a5) at (-135:1);\coordinate
(a6) at (-90:1); \coordinate (a7) at (-45:1);

\coordinate (r1) at (120:0.4); \coordinate (r2) at (-60:0.4); \coordinate (r3) at (-135:0.7);

\draw[blue] (a2) to (r1) to (a4);
\draw[blue] (r1) to[out=-90,in=150] (r2) to (a6);
\draw[blue] (r2) to (a1);

\draw[red] (a3) to (r1) to[out=-150,in=90] (r3) to (a5);
\draw[red] (r1) to[out=-30,in=90] (r2) to (a7);
\draw[red] (r2) to[out=-150,in=0] (r3);

  \end{tikzpicture}
\end{array}
\]

$m_{st} = 3$ (type $B_2$):
\[ \begin{array}{c}
  \begin{tikzpicture}
      \draw[dashed] (0,0) circle (1cm);
\coordinate (a0) at (30:1); \coordinate (a1) at (55:1);
\coordinate (a2) at (75:1);\coordinate (a3) at (105:1); \coordinate (a4) at (135:1);
\coordinate (a5) at (-45:1);\coordinate (a6) at (-75:1);
\coordinate (a7) at (-105:1); \coordinate (a8) at (-135:1);

\coordinate (l1) at (40:0.4); \coordinate (l0) at (0:0);

\draw[blue] (a0) -- (l1) -- (a1);
\draw[blue] (l1) -- (l0) -- (a3);
\draw[blue] (a6) -- (l0) -- (a8);

\draw[red] (a5) -- (l0) -- (a7);
\draw[red] (a2) -- (l0) -- (a4);

  \end{tikzpicture}
\end{array}
= 
 \begin{array}{c}
  \begin{tikzpicture}
      \draw[dashed] (0,0) circle (1cm);
\coordinate (a0) at (30:1); \coordinate (a1) at (55:1);
\coordinate (a2) at (75:1);\coordinate (a3) at (105:1); \coordinate (a4) at (135:1);
\coordinate (a8) at (-45:1);\coordinate (a7) at (-75:1);
\coordinate (a6) at (-105:1); \coordinate (a5) at (-135:1);

\coordinate (r1) at (120:0.4); \coordinate (r2) at (-60:0.4); \coordinate (r3) at (-135:0.7);

\draw[blue] (a1) -- (r1) -- (a3);
\draw[blue] (r1) to[out=-135,in=90] (r3) to (a5);
\draw[blue] (r3) to[out=0,in=-135] (r2) to (a7);
\draw[blue] (r1) to[out=-75,in=105] (r2);
\draw[blue] (r2) to (a0);

\draw[red] (a2) -- (r1) -- (a4);
\draw[red] (r1) to[out=-105,in=135] (r2)  to[out=75,in=-45] (r1);
\draw[red] (a6) to (r2) to (a8);

  \end{tikzpicture}
\end{array}
\]

\subsubsection{The Jones-Wenzl relation.}\label{2.7.7} Elias' Jones-Wenzl
relation expresses a dotted $2m_{st}$-vertex
\[
\begin{array}{c}
    \tikz[scale=0.7,rotate=-11]{\draw[dashed] (0,0) circle (1cm);
\draw[color=blue] (0,0) -- (22.5:1cm);
\draw[color=blue] (0,0) -- (67.5:1cm);
\draw[color=blue] (0,0) -- (112.5:1cm);
\draw[color=blue] (0,0) -- (157.5:1cm);
\draw[color=blue] (0,0) -- (-22.5:1cm);
\draw[color=blue] (0,0) -- (-67.5:1cm);
\draw[color=blue] (0,0) -- (-112.5:1cm);
\draw[color=red] (0,0) -- (0:1cm);
\draw[color=red] (0,0) -- (45:1cm);
\draw[color=red] (0,0) -- (90:1cm);
\draw[color=red] (0,0) -- (135:1cm);
\draw[color=red] (0,0) -- (180:1cm);
\draw[color=red] (0,0) -- (-45:1cm);
\draw[color=red] (0,0) -- (-90:1cm);
\draw[color=red] (0,0) -- (-135:1cm);
\draw[color=blue] (0,0) -- (-157.5:0.7);
 \draw[blue] (0,-1) --(0,-0.6);\node[circle,fill,draw,inner
      sep=0mm,minimum size=1mm,color=blue] at (-157.5:0.7) {}; 
}
\end{array}
\]
as a linear combination of diagrams consisting only of dots and
trivalent vertices. We will not give the general form of the relation
here, as the determination of the coefficients is complicated. (To
understand the results of this paper, explicit knowledge of
only the simplest coefficients is necessary.) We give the
details when $m_{{\color{red}s}{\color{blue}t}} = 2,
3, 4$ and refer the reader to \cite{EliasCathedral} for more information.

$m_{st} = 2$ (type $A_1 \times A_1$):
\begin{gather*}
  \begin{array}{c}
    \begin{tikzpicture}[scale=0.7]
      \draw[dashed] (0,0) circle (1cm);
\draw[red] (-45:1) -- (135:1);
\draw[blue] (45:1) -- (-135:0.6);
\node[circle,fill,draw,inner
      sep=0mm,minimum size=1mm,color=blue] at (-135:0.6) {}; 
    \end{tikzpicture}
  \end{array}
=
\begin{array}{c}
    \begin{tikzpicture}[scale=0.7]
      \draw[dashed] (0,0) circle (1cm);
\draw[red] (-45:1) -- (135:1);
\draw[blue] (45:1) -- (-135:-0.4);
\node[circle,fill,draw,inner
      sep=0mm,minimum size=1mm,color=blue] at (-135:-0.4) {}; 
    \end{tikzpicture}
  \end{array}
\end{gather*}

$m_{st} = 3$ (type $A_2$):
\begin{gather*}
  \begin{array}{c}
    \begin{tikzpicture}[scale=0.7]
      \draw[dashed] (0,0) circle (1cm);
\foreach \r in {30,150,-90}
      \draw[red] (0,0) -- (\r:1cm);
\foreach \r in {90,-30}
      \draw[blue] (0,0) -- (\r:1cm);
\draw[blue] (0,0) -- (-150:0.7);
\node[circle,fill,draw,inner
      sep=0mm,minimum size=1mm,color=blue] at (-150:0.7) {}; 
    \end{tikzpicture}
  \end{array}
=
  \begin{array}{c}
    \begin{tikzpicture}[scale=0.7]
      \draw[dashed] (0,0) circle (1cm);
\foreach \r in {30,150,-90}
      \draw[red] (0,0) -- (\r:1cm);
\foreach \r in {90,-30}
{ \draw[blue] (\r:0.5) -- (\r:1cm);
\node[circle,fill,draw,inner sep=0mm,minimum size=1mm,color=blue] at (\r:0.5) {};}
    \end{tikzpicture}
  \end{array} + 
  \begin{array}{c}
    \begin{tikzpicture}[scale=0.7]
      \draw[dashed] (0,0) circle (1cm);
\draw[red] (-90:1) to[out=90,in=-30] (150:1);
\draw[blue] (90:1) to[out=-90,in=150] (-30:1);
\draw[red] (30:0.5) -- (30:1cm);
\node[circle,fill,draw,inner sep=0mm,minimum size=1mm,color=red] at (30:0.5) {};
    \end{tikzpicture}
  \end{array} 
\end{gather*}

$m_{st} = 4$ (type $B_2$). Here there are two natural choices of
realization, and this affects the coefficients in the Jones-Wenzl
relation.  Assume first that we are using the symmetric ``geometric''
realization, so that 
\[
\langle \alpha_s^\vee, \alpha_t \rangle =
\langle \alpha_t^\vee, \alpha_s \rangle = -\sqrt{2}.\]
Here a Jones-Wenzl relation takes the form
\begin{gather*}
  \begin{array}{c}
    \begin{tikzpicture}[scale=0.6,rotate=-22.5]
      \draw[dashed] (0,0) circle (1cm);
\foreach \r in {0,90,180,-90}
      \draw[red] (0,0) -- (\r:1cm);
\foreach \r in {45,135,-45}
      \draw[blue] (0,0) -- (\r:1cm);
\draw[blue] (0,0) -- (-135:0.6);
\node[circle,fill,draw,inner
      sep=0mm,minimum size=1mm,color=blue] at (-135:0.6) {}; 
    \end{tikzpicture}
  \end{array} = 
  \begin{array}{c}
    \begin{tikzpicture}[scale=0.6]
      \def\r{0.6}
      \draw[dashed] (0,0) circle (1cm);
 \foreach\a in {-90,45,135}
\draw[blue] (\a:1) to (\a:\r);
 \foreach\a in {90,-45}
\draw[red] (\a:1) to (\a:\r);
\draw[red] (-120:1) to[out=80,in=-45] (157.75:1);
\draw[blue] (-90:\r) to[out=90,in=-45] (135:\r);
\draw[red] (90:\r) to[out=-90,in=135] (-45:\r);
\node[circle,fill,draw,inner
      sep=0mm,minimum size=1mm,color=blue] at (45:\r) {}; 
    \end{tikzpicture}
  \end{array}
+ 
  \begin{array}{c}
    \begin{tikzpicture}[scale=0.6]
      \def\r{0.6}
      \draw[dashed] (0,0) circle (1cm);
 \foreach\a in {-90,45,135}
\draw[blue] (\a:1) to (\a:\r);
 \foreach\a in {90,-45}
\draw[red] (\a:1) to (\a:\r);
\draw[red] (-120:1) to (157.75:1);
\draw[red] (-135:\r) to (-135:0.85);
\draw[blue] (-90:\r) to[out=90,in=-135] (45:\r);
\draw[red] (-135:\r) to[out=45,in=-90] (90:\r);
\node[circle,fill,draw,inner
      sep=0mm,minimum size=1mm,color=blue] at (135:\r) {}; 
\node[circle,fill,draw,inner
      sep=0mm,minimum size=1mm,color=red] at (-45:\r) {}; 
    \end{tikzpicture}
  \end{array}
+  
\begin{array}{c}
    \begin{tikzpicture}[scale=0.6]
      \def\r{0.6}
      \draw[dashed] (0,0) circle (1cm);
 \foreach\a in {-90,45,135}
\draw[blue] (\a:1) to (\a:\r);
 \foreach\a in {90,-45}
\draw[red] (\a:1) to (\a:\r);
\draw[red] (-120:1) to (157.75:1);
\draw[red] (-135:\r) to (-135:0.85);
\draw[blue] (135:\r) to[out=-45,in=-135] (45:\r);
\draw[red] (-135:\r) to[out=45,in=135] (-45:\r);
\node[circle,fill,draw,inner
      sep=0mm,minimum size=1mm,color=red] at (90:\r) {}; 
\node[circle,fill,draw,inner
      sep=0mm,minimum size=1mm,color=blue] at (-90:\r) {}; 
    \end{tikzpicture}
  \end{array}
+  
\sqrt{2} \begin{array}{c}
    \begin{tikzpicture}[scale=0.6]
      \def\r{0.6}
      \draw[dashed] (0,0) circle (1cm);
 \foreach\a in {-90,45,135}
\draw[blue] (\a:1) to (\a:\r);
 \foreach\a in {90,-45}
\draw[red] (\a:1) to (\a:\r);
\draw[red] (-120:1) to (157.75:1);
\draw[red] (-135:\r) to (-135:0.85);
\foreach\a in {-135,-45,90}
\draw[red] (\a:\r) to (0,0);
\node[circle,fill,draw,inner
      sep=0mm,minimum size=1mm,color=blue] at (45:\r) {}; 
\node[circle,fill,draw,inner
      sep=0mm,minimum size=1mm,color=blue] at (-90:\r) {}; 
\node[circle,fill,draw,inner
      sep=0mm,minimum size=1mm,color=blue] at (135:\r) {}; 
    \end{tikzpicture}
  \end{array}
+  
\sqrt{2}\begin{array}{c}
    \begin{tikzpicture}[scale=0.6]
      \def\r{0.6}
      \draw[dashed] (0,0) circle (1cm);
 \foreach\a in {-90,45,135}
\draw[blue] (\a:1) to (\a:\r);
 \foreach\a in {90,-45}
\draw[red] (\a:1) to (\a:\r);
\draw[red] (-120:1) to[out=80,in=-45]  (157.75:1);
\foreach\a in {135,45,-90}
\draw[blue] (\a:\r) to (0,0);
\foreach\a in {-45,90}
\node[circle,fill,draw,inner
      sep=0mm,minimum size=1mm,color=red] at (\a:\r) {}; 
    \end{tikzpicture}
  \end{array}
\end{gather*}
and one obtains the other relation by swapping red and
blue.

On the other hand if we take the ``Cartan matrix'' realization with
\[
\langle \alpha_t^\vee, \alpha_s \rangle = -2 \quad \text{and} \quad
\langle \alpha_s^\vee, \alpha_t \rangle = -1
\]
(so that the non-simple positive roots are $\alpha_{\color{red} s} + \a_{\color{blue} t}$ and
$\alpha_{\color{red} s} + 2\a_{\color{blue} t}$) then the relation is
not stable under interchanging red and blue, and the Jones-Wenzl
relations are
\begin{gather*}
  \begin{array}{c}
    \begin{tikzpicture}[scale=0.6,rotate=-22.5]
      \draw[dashed] (0,0) circle (1cm);
\foreach \r in {0,90,180,-90}
      \draw[red] (0,0) -- (\r:1cm);
\foreach \r in {45,135,-45}
      \draw[blue] (0,0) -- (\r:1cm);
\draw[blue] (0,0) -- (-135:0.6);
\node[circle,fill,draw,inner
      sep=0mm,minimum size=1mm,color=blue] at (-135:0.6) {}; 
    \end{tikzpicture}
  \end{array} = 
  \begin{array}{c}
    \begin{tikzpicture}[scale=0.6]
      \def\r{0.6}
      \draw[dashed] (0,0) circle (1cm);
 \foreach\a in {-90,45,135}
\draw[blue] (\a:1) to (\a:\r);
 \foreach\a in {90,-45}
\draw[red] (\a:1) to (\a:\r);
\draw[red] (-120:1) to[out=80,in=-45] (157.75:1);
\draw[blue] (-90:\r) to[out=90,in=-45] (135:\r);
\draw[red] (90:\r) to[out=-90,in=135] (-45:\r);
\node[circle,fill,draw,inner
      sep=0mm,minimum size=1mm,color=blue] at (45:\r) {}; 
    \end{tikzpicture}
  \end{array}
+ 
  \begin{array}{c}
    \begin{tikzpicture}[scale=0.6]
      \def\r{0.6}
      \draw[dashed] (0,0) circle (1cm);
 \foreach\a in {-90,45,135}
\draw[blue] (\a:1) to (\a:\r);
 \foreach\a in {90,-45}
\draw[red] (\a:1) to (\a:\r);
\draw[red] (-120:1) to (157.75:1);
\draw[red] (-135:\r) to (-135:0.85);
\draw[blue] (-90:\r) to[out=90,in=-135] (45:\r);
\draw[red] (-135:\r) to[out=45,in=-90] (90:\r);
\node[circle,fill,draw,inner
      sep=0mm,minimum size=1mm,color=blue] at (135:\r) {}; 
\node[circle,fill,draw,inner
      sep=0mm,minimum size=1mm,color=red] at (-45:\r) {}; 
    \end{tikzpicture}
  \end{array}
+  
\begin{array}{c}
    \begin{tikzpicture}[scale=0.6]
      \def\r{0.6}
      \draw[dashed] (0,0) circle (1cm);
 \foreach\a in {-90,45,135}
\draw[blue] (\a:1) to (\a:\r);
 \foreach\a in {90,-45}
\draw[red] (\a:1) to (\a:\r);
\draw[red] (-120:1) to (157.75:1);
\draw[red] (-135:\r) to (-135:0.85);
\draw[blue] (135:\r) to[out=-45,in=-135] (45:\r);
\draw[red] (-135:\r) to[out=45,in=135] (-45:\r);
\node[circle,fill,draw,inner
      sep=0mm,minimum size=1mm,color=red] at (90:\r) {}; 
\node[circle,fill,draw,inner
      sep=0mm,minimum size=1mm,color=blue] at (-90:\r) {}; 
    \end{tikzpicture}
  \end{array}
+  
2 \begin{array}{c}
    \begin{tikzpicture}[scale=0.6]
      \def\r{0.6}
      \draw[dashed] (0,0) circle (1cm);
 \foreach\a in {-90,45,135}
\draw[blue] (\a:1) to (\a:\r);
 \foreach\a in {90,-45}
\draw[red] (\a:1) to (\a:\r);
\draw[red] (-120:1) to (157.75:1);
\draw[red] (-135:\r) to (-135:0.85);
\foreach\a in {-135,-45,90}
\draw[red] (\a:\r) to (0,0);
\node[circle,fill,draw,inner
      sep=0mm,minimum size=1mm,color=blue] at (45:\r) {}; 
\node[circle,fill,draw,inner
      sep=0mm,minimum size=1mm,color=blue] at (-90:\r) {}; 
\node[circle,fill,draw,inner
      sep=0mm,minimum size=1mm,color=blue] at (135:\r) {}; 
    \end{tikzpicture}
  \end{array}
+  
\begin{array}{c}
    \begin{tikzpicture}[scale=0.6]
      \def\r{0.6}
      \draw[dashed] (0,0) circle (1cm);
 \foreach\a in {-90,45,135}
\draw[blue] (\a:1) to (\a:\r);
 \foreach\a in {90,-45}
\draw[red] (\a:1) to (\a:\r);
\draw[red] (-120:1) to[out=80,in=-45]  (157.75:1);
\foreach\a in {135,45,-90}
\draw[blue] (\a:\r) to (0,0);
\foreach\a in {-45,90}
\node[circle,fill,draw,inner
      sep=0mm,minimum size=1mm,color=red] at (\a:\r) {}; 
    \end{tikzpicture}
  \end{array}
\end{gather*}

\begin{gather*}
  \begin{array}{c}
    \begin{tikzpicture}[scale=0.6,rotate=-22.5]
      \draw[dashed] (0,0) circle (1cm);
\foreach \r in {0,90,180,-90}
      \draw[blue] (0,0) -- (\r:1cm);
\foreach \r in {45,135,-45}
      \draw[red] (0,0) -- (\r:1cm);
\draw[red] (0,0) -- (-135:0.6);
\node[circle,fill,draw,inner
      sep=0mm,minimum size=1mm,color=red] at (-135:0.6) {}; 
    \end{tikzpicture}
  \end{array} = 
  \begin{array}{c}
    \begin{tikzpicture}[scale=0.6]
      \def\r{0.6}
      \draw[dashed] (0,0) circle (1cm);
 \foreach\a in {-90,45,135}
\draw[red] (\a:1) to (\a:\r);
 \foreach\a in {90,-45}
\draw[blue] (\a:1) to (\a:\r);
\draw[blue] (-120:1) to[out=80,in=-45] (157.75:1);
\draw[red] (-90:\r) to[out=90,in=-45] (135:\r);
\draw[blue] (90:\r) to[out=-90,in=135] (-45:\r);
\node[circle,fill,draw,inner
      sep=0mm,minimum size=1mm,color=red] at (45:\r) {}; 
    \end{tikzpicture}
  \end{array}
+ 
  \begin{array}{c}
    \begin{tikzpicture}[scale=0.6]
      \def\r{0.6}
      \draw[dashed] (0,0) circle (1cm);
 \foreach\a in {-90,45,135}
\draw[red] (\a:1) to (\a:\r);
 \foreach\a in {90,-45}
\draw[blue] (\a:1) to (\a:\r);
\draw[blue] (-120:1) to (157.75:1);
\draw[blue] (-135:\r) to (-135:0.85);
\draw[red] (-90:\r) to[out=90,in=-135] (45:\r);
\draw[blue] (-135:\r) to[out=45,in=-90] (90:\r);
\node[circle,fill,draw,inner
      sep=0mm,minimum size=1mm,color=red] at (135:\r) {}; 
\node[circle,fill,draw,inner
      sep=0mm,minimum size=1mm,color=blue] at (-45:\r) {}; 
    \end{tikzpicture}
  \end{array}
+  
\begin{array}{c}
    \begin{tikzpicture}[scale=0.6]
      \def\r{0.6}
      \draw[dashed] (0,0) circle (1cm);
 \foreach\a in {-90,45,135}
\draw[red] (\a:1) to (\a:\r);
 \foreach\a in {90,-45}
\draw[blue] (\a:1) to (\a:\r);
\draw[blue] (-120:1) to (157.75:1);
\draw[blue] (-135:\r) to (-135:0.85);
\draw[red] (135:\r) to[out=-45,in=-135] (45:\r);
\draw[blue] (-135:\r) to[out=45,in=135] (-45:\r);
\node[circle,fill,draw,inner
      sep=0mm,minimum size=1mm,color=blue] at (90:\r) {}; 
\node[circle,fill,draw,inner
      sep=0mm,minimum size=1mm,color=red] at (-90:\r) {}; 
    \end{tikzpicture}
  \end{array}
+  
 \begin{array}{c}
    \begin{tikzpicture}[scale=0.6]
      \def\r{0.6}
      \draw[dashed] (0,0) circle (1cm);
 \foreach\a in {-90,45,135}
\draw[red] (\a:1) to (\a:\r);
 \foreach\a in {90,-45}
\draw[blue] (\a:1) to (\a:\r);
\draw[blue] (-120:1) to (157.75:1);
\draw[blue] (-135:\r) to (-135:0.85);
\foreach\a in {-135,-45,90}
\draw[blue] (\a:\r) to (0,0);
\node[circle,fill,draw,inner
      sep=0mm,minimum size=1mm,color=red] at (45:\r) {}; 
\node[circle,fill,draw,inner
      sep=0mm,minimum size=1mm,color=red] at (-90:\r) {}; 
\node[circle,fill,draw,inner
      sep=0mm,minimum size=1mm,color=red] at (135:\r) {}; 
    \end{tikzpicture}
  \end{array}
+  
2 \begin{array}{c}
    \begin{tikzpicture}[scale=0.6]
      \def\r{0.6}
      \draw[dashed] (0,0) circle (1cm);
 \foreach\a in {-90,45,135}
\draw[red] (\a:1) to (\a:\r);
 \foreach\a in {90,-45}
\draw[blue] (\a:1) to (\a:\r);
\draw[blue] (-120:1) to[out=80,in=-45]  (157.75:1);
\foreach\a in {135,45,-90}
\draw[red] (\a:\r) to (0,0);
\foreach\a in {-45,90}
\node[circle,fill,draw,inner
      sep=0mm,minimum size=1mm,color=blue] at (\a:\r) {}; 
    \end{tikzpicture}
  \end{array}
\end{gather*}

\begin{remark}
  A useful mnemonic to remember the placement of the coefficients in
  the Jones-Wenzl relation is given by the relation
\begin{gather} \label{eq:alldots}
\begin{array}{c}
    \tikz[scale=0.7,rotate=-11]{\draw[dashed] (0,0) circle (1cm);
\def\l{0.8}
\foreach \a in {22.5,67.5,112.5,157.5,-22.5,-67.5,-112.5,-157.5}
{\draw[color=blue] (0,0) -- (\a:\l);
 \node[circle,fill,draw,inner
      sep=0mm,minimum size=1mm,color=blue] at (\a:\l) {}; }
\foreach \a in {0,45,90,135,180,-45,-90,-135}
{\draw[color=red] (0,0) -- (\a:\l);
 \node[circle,fill,draw,inner
      sep=0mm,minimum size=1mm,color=red] at (\a:\l) {}; }
}
\end{array}
=
  \begin{array}{c}
    \begin{tikzpicture}[scale=0.7]
      \draw[dashed] (0,0) circle (1cm);
\node at (0,0) {$\pi_{st}$};
\draw (-0.5,-0.4) rectangle (0.5,0.4);
    \end{tikzpicture}
  \end{array}
\end{gather}
where $\pi_{st}$ denotes the product of all positive roots in the root
subsystem corresponding to ${\color{red}s}, {\color{blue} t}$. The reader can convince themselves
that each Jones-Wenzl relation given above implies that
\eqref{eq:alldots} holds. With some practice, \eqref{eq:alldots} can
be used to remember the placement of the  coefficients above.
\end{remark}

\subsubsection{Zamolodchikov relations}\label{2.7.8} We will not repeat the
definition of the Zamolodchikov relations here, and instead refer the
reader to \cite[\S 1.4.3]{EW2}. (See \cite{EW3} for 
topological background).

\vspace{0.5cm}

\subsection{} \label{subsec:grading} All relations defining $\SD$ are
homogenous for the grading on $S$-graphs defined in  \S
\ref{subsec:diags}. It follows that the $\SD$ is enriched in graded
$\Bbbk$-modules. Throughout it will be convenient to view $\SD$ as
enriched in graded left $R$-modules via
\[
f \cdot
\begin{array}{c}
\tikz[scale=0.4]{ \draw (-1,-1) rectangle (1,1); \node (a) at (0,0) {$D$};}
\end{array}
= \begin{array}{c}
\tikz[scale=0.4]{ \draw (-1,-1) rectangle (1.3,1); \node (a) at (0.6,0)
  {$D$};
\draw (-0.8,-0.5) rectangle (-0.1,0.5); \node (b) at (-0.45,0) {$f$};
}
\end{array}
\]
for an $S$-graph $D$ and homogenous $f \in R$.

Given a $S$-graph $D$ we denote by $\overline{D}$ the $S$-graph
obtained by flipping the diagram vertically. This operation induces a
contravariant equivalence (also denoted $G \mapsto \overline{G}$ on morphisms) of
the monoidal category $\SD$.

\subsection{} \label{int} 
For any $x \in W$, let $\SD^{\nless x}$ be the quotient category of $\SD$
by the ideal $\SD_{< x}$ of $\SD$ generated by all the morphisms which factor
through $\un{y}$, where $\un{y}$ is a reduced expression for some $y <
x$. Let $\Hom^{\nless x}(-, -)$ denote morphisms in
$\SD^{\nless x}$. The image of
any reduced expression for $x$ in $\SD^{\nless x}$ yields an object
defined up to canonical isomorphism, independent of the choice of
reduced expression (see \cite[6.5]{EW2}). We denote this object
$x$. We have $\End^{\nless x}(x) = R$.

By \cite[Theorem 1.1]{EW2}, $\Hom^{\nless
  x}(\un{w}, x)$ and $\Hom^{\nless x}(x,\un{w})$ are
free (graded) $R$-modules with graded basis given by the (images of the) light leaves
$LL_{\un{w}, \un{e}}$ and $\overline{LL}_{\un{w}, \un{e}}$
respectively, where $\un{e}$ runs over the $01$-sequences with
$(\un{w}^{\un{e}})_\bullet=x$. The light leaves are constructed in
\cite[\S 6.1]{EW2}. For any $\un{w} \in Ex(S)$ the \emph{intersection form} for $\un{w}$ at
$x$ is the $R$-bilinear pairing of graded free $R$-modules  
\[
I_{x,\un{w}} : \Hom^{\nless x}(x,\un{w}) \times \Hom^{\nless x}(\un{w},x)
\to \End^{\nless x}(x) = R.
\]

\subsection{}  \label{ifeg} We give two examples of intersection forms.

First, assume that $W$ is a dihedral group with simple reflections
    ${\color{blue} s}$ and ${\color{red} t}$. Let $\un{w} = sts$, $x =
    s$. 
There are two subexpressions for $x$: ${\bf e}^1 =
    (U1, U0, D0)$ (defect 0) and ${\bf e}^2 = (U0, U0, U1)$ (defect 2). The
    corresponding light leaves morphisms are
\[
l_1 = 
\begin{array}{c}
\begin{tikzpicture}[scale=0.3]
\draw[blue] (0,0) -- (0,1) -- (1,2) -- (1,3);
\draw[blue] (2,0) -- (2,1) -- (1,2) -- (1,3);
\draw[red] (1,0) -- (1,1);
\node[circle,red,draw,fill,minimum size=1mm,inner sep=0pt] (a) at (1,1) {};
\end{tikzpicture}
\end{array}
\qquad \text{and} \qquad
l_2 = 
\begin{array}{c}
\begin{tikzpicture}[scale=0.3]
\draw[blue] (0,0) -- (0,1);
\node[circle,blue,draw,fill,minimum size=1mm,inner sep=0pt] (a) at (0,1) {};
\draw[blue] (2,0) -- (2,1) -- (1,2) -- (1,3);
\draw[red] (1,0) -- (1,1);
\node[circle,red,draw,fill,minimum size=1mm,inner sep=0pt] (a) at (1,1) {};
\end{tikzpicture}
\end{array}.
\]
The intersection form is given by the matrix:
\[ ( l_i \circ \bar{l_j})_{i,j \in \{ 1, 2 \}}=
\left ( \begin{matrix} \langle \alpha_t, \alpha_s^\vee \rangle & \a_t \\ \a_t
  & \a_s\a_t \end{matrix} \right ) \]
For example, the upper left entry follows from the following calculation:
\[
\begin{array}{c}
\begin{tikzpicture}[scale=0.3]
\draw[blue] (1,-3) -- (1,-2) -- (0,-1) -- (0,0) -- (0,1) -- (1,2) -- (1,3);
\draw[blue] (1,-3) -- (1,-2) -- (2,-1) -- (2,0) -- (2,1) -- (1,2) --
(1,3);
\draw[red] (1,-1) -- (1,1);
\node[circle,red,draw,fill,minimum size=1mm,inner sep=0pt] (a) at
(1,1) {};
\node[circle,red,draw,fill,minimum size=1mm,inner sep=0pt] (a) at (1,-1) {};
\end{tikzpicture}
\end{array} = 
\begin{array}{c}
\begin{tikzpicture}[scale=0.3]
\draw[blue] (1,-3) -- (1,-2) -- (0,-1) -- (0,0) -- (0,1) -- (1,2) -- (1,3);
\draw[blue] (1,-3) -- (1,-2) -- (2,-1) -- (2,0) -- (2,1) -- (1,2) --
(1,3);
\node at (1,0) {$\alpha_t$};
\end{tikzpicture}
\end{array}
= 
\begin{array}{c}
\begin{tikzpicture}[scale=0.3]
\draw[blue] (1,-3) -- (1,-2) -- (0,-1);
\draw[blue] (0,1) -- (1,2) -- (1,3);

\node[circle,blue,draw,fill,minimum size=1mm,inner sep=0pt] (a) at
(0,1) {};
\node[circle,blue,draw,fill,minimum size=1mm,inner sep=0pt] (a) at (0,-1) {};

\draw[blue] (1,-3) -- (1,-2) -- (2,-1) -- (2,0) -- (2,1) -- (1,2) --
(1,3);
\node at (-1.5,0) {$\partial_s(\alpha_t)$};
\end{tikzpicture}
\end{array}
+
\begin{array}{c}
\begin{tikzpicture}[scale=0.3]
\draw[blue] (1,-3) -- (1,-2) -- (0,-1) -- (0,0) -- (0,1) -- (1,2) -- (1,3);
\draw[blue] (1,-3) -- (1,-2) -- (2,-1) -- (2,0) -- (2,1) -- (1,2) --
(1,3);
\node at (-1.6,0) {$s(\alpha_t)$};
\end{tikzpicture}
\end{array}
=
\begin{array}{c}
\begin{tikzpicture}[scale=0.3]
\node at (-1.8,0) {$\partial_s(\alpha_t)$};
\draw[blue] (0,-3) -- (0,3);
\end{tikzpicture}
\end{array}
\]
(we use the barbell, nil Hecke and needle relations, as well as
$\partial_s(\alpha_t) = \langle \alpha_t, \alpha_s^\vee \rangle$).
This example shows the existence of torsion in the intersection
cohomology of the Schubert variety indexed by $sts$ if $\langle
\alpha_t, \alpha_s^\vee \rangle < -1$ (as happens in $B_2$, $G_2$ and
$\widetilde{A_1}$).

For the second example, assume that $W$ is of type $D_4$ with generators ${\color{blue}
    s}$, ${\color{black} t}$, ${\color{red} u}$, ${\color{green} v}$
  such that $s$, $u$ and $v$ commute. Let $\un{w} = vuvtsuv$, $x =
  suv$. We only give the part of the intersection form corresponding
  to light leaves morphisms of degree 0. Then there are three subexpressions of defect
  $0$. These subexpressions, and the corresponding light leaves
  maps are the following:




\[
\begin{array}{c}
\begin{tikzpicture}[xscale=-0.42,yscale=-0.42]
\draw[blue] (1,0) -- (1,-1);
\node[circle,blue,draw,fill,minimum size=1mm,inner sep=0pt] at (1,-1)
{};
\draw[blue] (-3,0) -- (-3,-1) -- (-1,-3) -- (-1,-4);

\draw[red] (-2,0) -- (-2,-1) -- (0,-3) -- (0,-4);
\draw[red] (2,0) -- (2,-1) -- (0,-3) -- (0,-4);

\draw[green] (-1,0) -- (-1,-1) -- (1,-3) -- (1,-4);
\draw[green] (3,0) -- (3,-1) -- (1,-3) -- (1,-4);

\draw[black] (0,0) -- (0,-1);
\node[circle,black,draw,fill,minimum size=1mm,inner sep=0pt] at (0,-1)
{};
\end{tikzpicture}\\
\textrm{\tiny U1 U1 U0 U0 D0 D0 U1.}
\end{array} \qquad
\begin{array}{c}
\begin{tikzpicture}[xscale=-0.42,yscale=-0.42]

\draw[blue] (-3,0) -- (-3,-1) -- (-1,-3) -- (-1,-4);
\draw[blue] (1,0) -- (1,-1) -- (-1,-3) -- (-1,-4);

\draw[red] (2,0) -- (2,-1);
\node[circle,red,draw,fill,minimum size=1mm,inner sep=0pt] at (2,-1) {};
\draw[red] (-2,0) -- (-2,-1) -- (0,-3) -- (0,-4);

\draw[green] (-1,0) -- (-1,-1) -- (1,-3) -- (1,-4);
\draw[green] (3,0) -- (3,-1) -- (1,-3) -- (1,-4);

\draw[black] (0,0) -- (0,-1);
\node[circle,black,draw,fill,minimum size=1mm,inner sep=0pt] at (0,-1)
{};
\end{tikzpicture}\\
\textrm{ {\tiny U1 U0 U1 U0 D0 U1 D0}}
\end{array} \qquad
\begin{array}{c}
\begin{tikzpicture}[xscale=-0.42,yscale=-0.42]

\draw[blue] (-3,0) -- (-3,-1) -- (-1,-3) -- (-1,-4);
\draw[blue] (1,0) -- (1,-1) -- (-1,-3) -- (-1,-4);

\draw[red] (-2,0) -- (-2,-1) -- (0,-3) -- (0,-4);
\draw[red] (2,0) -- (2,-1) -- (0,-3) -- (0,-4);

\draw[green] (-1,0) -- (-1,-1) -- (1,-3) -- (1,-4);
\draw[green] (3,0) -- (3,-1);
\node[circle,green,draw,fill,minimum size=1mm,inner sep=0pt] at (3,-1)
{};

\draw[black] (0,0) -- (0,-1);
\node[circle,black,draw,fill,minimum size=1mm,inner sep=0pt] at (0,-1)
{};
\end{tikzpicture}\\
\textrm{\tiny U0 U1 U1 U0 U1 D0 D0}
\end{array}
\]
We leave it to the reader to pair these morphisms and obtain the
intersection form
\[
\left ( \begin{matrix} 0 & -1 & -1 \\ -1 & 0 & -1 \\ -1 & -1 & 0
\end{matrix}   \right )
\]
Note that the determinant of this matrix is -2. This example of
2-torsion in the $D_4$ flag variety was discovered by Braden
\cite[A.18]{W1}.

The goal of this paper is to show that light leaves morphisms
corresponding to sequences without $D1$ (like those above) are
canonical and to give a formula for the corresponding entry in the
intersection form in terms of the nil Hecke ring.

\subsection{} Let $\SD_{\oplus}$ denote the additive graded  
envelope of $\SD$. That is, objects of $\SD_{\oplus}$ are formal
direct sums of shifts of the objects $\un{w}$ in $\SD$, with obvious morphisms. We denote by $\SD_{\oplus,0}$ the
subcategory of $\SD_{\oplus}$ consisting only of degree zero
morphisms, and by $\Kar(\SD)$ the Karoubian envelope of
$\SD_{\oplus,0}$. Then $\Kar(\SD)$  is a $\Bbbk$-linear category with
grading shift  functor $[1]$.

Under the assumptions of \S\ref{real}, the main theorems of
\cite{EW2} give a basis for hom spaces in
$\SD$ and deduce a classification of the indecomposable
objects up to isomorphism and shift: for any $x$ and
reduced expression $\un{x}$ for $x$, $\un{x}$ has a unique summand $b_x$ which
is not a summand up to shift of $\un{y}$ for any $y < x$, and any
indecomposable object in $\Kar(\SD)$ is isomorphic to $b_x[m]$ for
some $x \in W$ and $ m \in \ZM$.

From this one deduces an isomorphism of $\ZM[v,v^{-1}]$-algebras
\[
\e : \HS \simto [\Kar(\SD)] \qquad \qquad \un{H}_{\un{w}} \mapsto \un{w}.
\]
where $[\Kar(\SD)]$ denotes the split Grothendieck group of
$\Kar(\SD)$. For any $\un{w} \in Ex(S)$ one has in $[\Kar(\SD)]$
\[
[\un{w}] = \sum_{x \in W} m_x [b_x]
\]
where $m_x$ denotes the graded rank of the intersection form
$I_{x,\un{w}}$. This explains the central importance of the
intersection forms.

In \cite{EW} it is proved (using Soergel bimodules) that if $\Bbbk$ is
a field of characteristic 0 then $\e$ sends the Kazhdan-Lusztig basis
element $\un{H}_w$ to the class of $b_w$.

\subsection{} Though it will not be used in this paper, we briefly 
explain the connection to Soergel bimodules. Let us assume that
$\Bbbk$ is a field of characteristic $\ne 2$.

For any $s \in S$, we denote by $R^s \subset R$ the invariant subring
and
\[
B_s=R \otimes_{R^s} R[1]
\]
a graded $R$-bimodule. For any $\un{w}=(s_1, \cdots, s_m) \in Ex(S)$, we define the corresponding {\it Bott-Samelson bimodule} to be 
\[
B_{\un{w}}:=B_{s_1} \otimes_R B_{s_2} \otimes_R \cdots \otimes_R B_{s_m}.
\]
Let $\mathbb SBim$ be the category of Soergel bimodules, that is, the
Karoubi envelope of $B_{\un{w}}$ for all $\un{w} \in Ex(S)$ inside the
category of graded $R$-bimodules.

By \cite[Satz]{So07}\footnote{under some mild assumptions on the
  realization $\hg$ (which we ignore here).}
for any $w \in W$, there exists a unique (up
to isomorphism) indecomposable $R$-bimodule $B_w$ which occurs as a
direct summand of $B_{\un{w}}$ for any reduced expression $\un{w}$
with $\un{w}_\bullet=w$, and does not occur as a direct summand of
$B_{\un{x}}$ for any $\un{x}$ with $w \nleq \un{x}_*$. The set
$\{B_w\}_{w \in W}$ is a complete set of indecomposable Soergel
bimodules (up to isomorphism and degree shift).

In \cite{EW2} a functor $\FC : \SD \to \mathbb SBim$ is constructed and it
is proved that $\FC$ induces an equivalence of monoidal categories
$\Kar(\SD) \simto \mathbb SBim$. In particular, $\FC$ maps $b_w$ to
$B_w$, for any $w \in W$.

\section{Gobbling morphisms}

\subsection{} 
In general light leaves morphisms are not canonical, and
this causes many complications. For example it seems
difficult to make all entries in intersection forms canonical.
In this section we introduce gobbling morphisms, which are certain 
canonical morphisms between Soergel bimodules. 

The aim of this section is to prove:

\begin{prop} \label{gobble}
Let $\un{w} = u_1 \dots u_n$ be an expression.
  Any two morphisms $\un{w} \to \un{w}_*$ in $\SD^{\not < \un{w}_*}$ given by diagrams consisting
  only of $2m_{st}$-valent vertices and $\ell(\un{w}) - \ell(\un{w}_*)$
  trivalent vertices are equal.
\end{prop}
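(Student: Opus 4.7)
My plan is to reduce the statement to a one-dimensionality computation for the relevant graded piece of the hom space, and then to upgrade proportionality to equality. First, by the grading conventions of \S\ref{subsec:diags}, any diagram of the prescribed form has total degree $-(\ell(\un{w})-\ell(\un{w}_*))=\ell(\un{w}_*)-\ell(\un{w})$, since each trivalent vertex contributes $-1$ and each $2m_{st}$-valent vertex contributes $0$. By the double-leaves basis theorem \cite[Theorem~1.1]{EW2}, $\Hom^{\not<\un{w}_*}(\un{w},\un{w}_*)$ is a free graded $R$-module with basis given by the light leaves $\overline{LL}_{\un{w},\un{e}}$, indexed by $01$-sequences $\un{e}$ with $(\un{w}^{\un{e}})_\bullet=\un{w}_*$, each of degree $\defect(\un{e})$.

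Next I would pin down the minimal defect. Writing $n_{U1},n_{U0},n_{D1},n_{D0}$ for the number of symbols of each decorated type in $\un{e}$, the identities $n_{U1}-n_{D1}=\ell(\un{w}_*)$ and $n_{U1}+n_{U0}+n_{D1}+n_{D0}=\ell(\un{w})$ combine to give
\[
\defect(\un{e}) = 2(n_{U0}+n_{D1}) + \ell(\un{w}_*) - \ell(\un{w}),
\]
so the defect is minimized precisely when $n_{U0}=n_{D1}=0$. Since the $U/D$ decoration at step $i$ is determined by $\un{w}$ and the prefix $(e_1,\dots,e_{i-1})$, this ``no $U0$, no $D1$'' condition determines a unique \emph{greedy} subexpression $\un{e}^g$; by the definition of the Demazure product $(\un{w}^{\un{e}^g})_\bullet=\un{w}_*$, and the formula above gives $\defect(\un{e}^g)=\ell(\un{w}_*)-\ell(\un{w})$. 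Since $R$ is nonnegatively graded, the graded piece of $\Hom^{\not<\un{w}_*}(\un{w},\un{w}_*)$ in degree $\ell(\un{w}_*)-\ell(\un{w})$ is the one-dimensional $\Bbbk$-vector space spanned by $\overline{LL}_{\un{w},\un{e}^g}$, so any gobbling diagram is a $\Bbbk$-scalar multiple of this light leaf.

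To upgrade proportionality to equality, I would observe that the light-leaf recipe of \cite[\S6.1]{EW2}, specialized to $\un{e}^g$, produces by inspection a specific gobbling diagram $D^g$: at each step only ``U1 = add a vertical strand'' or ``D0 = absorb the new letter using a rex move followed by a trivalent vertex'' occurs, and both use only $2m_{st}$-valent and trivalent generators. Hence $D^g$ realizes $\overline{LL}_{\un{w},\un{e}^g}$ with scalar $1$, and it suffices to prove that any other gobbling diagram $D$ equals $D^g$ in $\SD^{\not<\un{w}_*}$. I would proceed by induction on $\ell(\un{w})-\ell(\un{w}_*)$. The base case (no trivalent vertices, so $\un{w}$ is already reduced and $\un{w}_\bullet=\un{w}_*$) is the coherence of rex moves, which follows from the two-color associativity of \S\ref{2.7.6} together with the Zamolodchikov relations of \S\ref{2.7.8}. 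For the inductive step one locates a trivalent vertex in $D$ and ``floats'' it to a canonical position via the Frobenius, needle and nil Hecke relations, thus reducing to a gobbling diagram with one fewer trivalent vertex on a strictly shorter expression. The main obstacle is the inductive step: every local rearrangement must either preserve the diagram outright via a relation of $\SD$ or produce error terms that factor through expressions whose Demazure product is $<\un{w}_*$, so that they vanish in the quotient. This requires delicate bookkeeping on $01$-sequences, using the monotone behaviour of the Demazure product under the relevant moves.
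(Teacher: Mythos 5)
Your first step is correct and coincides with the paper's Lemma \ref{deg}: the defect identity $\defect(\un{e}) = 2(n_{U0}+n_{D1}) + \ell(\un{w}_*) - \ell(\un{w})$ shows the minimal degree is $\ell(\un{w}_*)-\ell(\un{w})$, attained only by the greedy subexpression, so the relevant graded piece of $\Hom^{\not<\un{w}_*}(\un{w},\un{w}_*)$ is a free $\Bbbk$-module of rank one and every gobbling diagram is a scalar multiple of a fixed light leaf $D^g$. But this only yields proportionality, and fixing the scalar to be $1$ is exactly the original statement in disguise: you must still prove that an arbitrary diagram built from $2m_{st}$-valent and trivalent vertices equals $D^g$ in $\SD^{\not<\un{w}_*}$. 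At precisely this point your argument stops being a proof. The proposed inductive step --- ``locate a trivalent vertex and float it to a canonical position via the Frobenius, needle and nil Hecke relations'' --- both misidentifies the tools and omits the actual work. A trivalent vertex sitting above or below $2m_{st}$-valent vertices can only be moved past them using two-color associativity \S\ref{2.7.6} and Elias' Jones--Wenzl relation \S\ref{2.7.7}, and these moves create dotted error terms; one must then show that all such terms die in the quotient. That is the entire technical content of the paper: the vanishing lemmas (Lemma \ref{lem:vanish} and the pitchfork Lemma \ref{lem:pitchfork}), the corollary that the ``square'' of a $2m_{st}$-valent vertex can be removed, the well-definedness of the canonical morphism (Proposition \ref{cor:ll}, which needs rex-move coherence plus a careful defect analysis of the correction terms), and above all the rank-two computation (the two Claims proved by induction on $k$) together with the reduction of the general case to rank two in \S\ref{proof:gen}, which together establish Lemma \ref{lem:ga}(2), i.e.\ that precomposing the canonical morphism with a $2m_{st}$-valent vertex again gives a canonical morphism.

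Note also that your induction variable is not well adapted to the hard case: peeling off a $2m_{st}$-valent vertex does not change $\ell(\un{w})-\ell(\un{w}_*)$ (the number of trivalent vertices), so an induction on that quantity gives you nothing when the bottom-most generator is a $2m_{st}$-valent vertex; the paper instead inducts on the diagram itself (equivalently, establishes Lemma \ref{lem:ga} for both types of generators) and, inside the rank-two argument, on the length $k$ of the overlap with the braid word. So while your degree/rank-one reduction is sound and your description of $D^g$ (only $U1$'s and $D0$'s in the greedy subexpression) is accurate, the acknowledged ``main obstacle'' is not a bookkeeping detail but the substance of the proposition, and it is not resolved by the relations you cite.
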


\begin{defi}
  We call the morphism $\un{w} \to \un{w}_*$ in $\SD^{\nless \un{w}_*}$ whose unicity is given by
  the previous proposition the \emph{gobbling
    morphism} and denote it by $G_{\un{w}}$.
\end{defi}

\begin{remark}
  One has $\deg G_{\un{w}} = - (\sharp\text{ of trivalent vertices in
  $G_{\un{w}}$}) = \ell(\un{w}_*)- \ell(\un{w}).$
\end{remark}

For $\un{w}$ in the proposition we can consider the subexpression $\un{e}
:= e_1 \dots e_n$ defined inductively by the recipe
\begin{gather} \label{eq:edef}
e_i := \begin{cases} 1 & \text{if $w_{i-1}s_i > w_{i-1}$,} \\
  0 & \text{otherwise}. \end{cases}
\end{gather}
where $w_i := (\un{w}_{\le i}^{\un{e}_{\le i}})_{\bullet}$.
Obviously $(\un{w}^{\un{e}})_\bullet = \un{w}_*$. Also note that the
decoration of $\un{e}$ consists entirely of $U1$'s and $D0$'s. Hence
(any choice of) the light leaf morphism $L_{\un{w}, \un{e}}$
consists only of $2m_{st}$-valent vertices and exactly $\ell(\un{w}) -
\ell(\un{w}_*)$ trivalent vertices. In particular $L_{\un{w}, \un{e}}$
satisfies the conditions of Proposition \ref{gobble}.

Let $l_{\un{w}}$ denote the image of $L_{\un{w}, \un{e}}$ in $D^{\nless
  \un{w}_*}$. We can restate Proposition \ref{gobble} as:

\begin{prop} \label{gobblenew}
  Any morphism $\un{w} \to \un{w}_*$ in $\SD^{\not < \un{w}_*}$ satisfying the
  conditions of Proposition \ref{gobble} is equal to $l_{\un{w}}$.
\end{prop}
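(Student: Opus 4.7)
My approach has two stages: first, use the Elias--Williamson basis theorem \cite[Theorem 1.1]{EW2} together with an elementary defect computation to show that any $g$ as in the proposition equals $\lambda \cdot l_{\un{w}}$ for some $\lambda \in \Bbbk$, and then pin down $\lambda = 1$.

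For the dimension count, consider any subexpression $\un{e}'$ of $\un{w}$ with $(\un{w}^{\un{e}'})_\bullet = \un{w}_*$, and let $a, b, c, d$ denote the numbers of $U1$, $U0$, $D1$, $D0$ decorations in $\un{e}'$ respectively. The identities $a + b + c + d = \ell(\un{w})$ and $a - c = \ell(\un{w}_*)$ yield
\[ \defect(\un{e}') \;=\; b - d \;=\; 2(b + c) + \ell(\un{w}_*) - \ell(\un{w}) \;\geq\; \ell(\un{w}_*) - \ell(\un{w}), \]
with equality if and only if $b = c = 0$; the recipe \eqref{eq:edef} gives the unique such subexpression, namely $\un{e}$ itself. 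By \cite[Theorem 1.1]{EW2}, $\Hom^{\nless \un{w}_*}(\un{w},\un{w}_*)$ is a free graded $R$-module with basis $\{LL_{\un{w},\un{e}'}\}$ in degrees $\{\defect(\un{e}')\}$; since $R$ is concentrated in non-negative (even) degrees, the degree-$(\ell(\un{w}_*) - \ell(\un{w}))$ part of this hom space equals $\Bbbk \cdot l_{\un{w}}$. As $g$ is homogeneous of degree $\ell(\un{w}_*) - \ell(\un{w})$ (each trivalent vertex contributes $-1$ and the $2m_{st}$-valent vertices have degree zero), this gives $g = \lambda \cdot l_{\un{w}}$.

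To show $\lambda = 1$, I would induct on the number $k := \ell(\un{w}) - \ell(\un{w}_*)$ of trivalent vertices in $g$. The base case $k = 0$ means $\un{w}$ is a reduced expression for $\un{w}_*$ and $g$ is a \emph{rex move}, i.e., a composition of $2m_{st}$-valent vertices only; by the Zamolodchikov coherence theorem \cite[\S 1.4.3]{EW2}, any two rex moves between the same pair of reduced expressions coincide in $\SD$, so $g = l_{\un{w}}$. In the inductive step one isolates a topmost trivalent vertex in $g$ --- using planar isotopy and Zamolodchikov relations to pull it to the top of the diagram --- and factors $g = g' \circ h$, where $h : \un{w} \to \un{w}'$ consists of a rex move followed by a single trivalent vertex, and $g' : \un{w}' \to \un{w}_*$ has $k - 1$ trivalent vertices. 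Since deleting a $D0$-indexed letter from $\un{w}$ leaves the Demazure product unchanged, one has $\un{w}'_* = \un{w}_*$ and the inductive hypothesis identifies $g'$ with $l_{\un{w}'}$; a parallel identification of $h$ with the corresponding step in the construction of $l_{\un{w}}$ from \cite[\S 6.1]{EW2} (again by Zamolodchikov coherence) then gives $g = l_{\un{w}}$.

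The principal obstacle is the cellular factorization in the inductive step: one must argue canonically that a chosen trivalent vertex can be extracted from $g$ compatibly with the inductive construction of the light leaf, which requires a careful analysis of rex-moves in the presence of trivalent vertices. A perhaps cleaner alternative is to apply the functor $\FC : \SD \to \mathbb SBim$ and evaluate both morphisms on the distinguished element $1^{\otimes(\ell(\un{w}) + 1)} \in B_{\un{w}}$: from the explicit formulas for the generators in \cite{EW2} one checks that every $2m_{st}$-valent and trivalent vertex sends the distinguished element of its source to that of its target, so $\FC(g)$ and $\FC(l_{\un{w}})$ agree on $1^{\otimes(\ell(\un{w}) + 1)}$; since the image of $1^{\otimes(\ell(\un{w}_*) + 1)}$ in the quotient of $\mathbb SBim$ corresponding to $\SD^{\nless \un{w}_*}$ is nonzero, this forces $\lambda = 1$.
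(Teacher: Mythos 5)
Your first stage is sound, and it is exactly the paper's Lemma~\ref{deg}: the defect inequality with equality forcing $b=c=0$ shows that the degree $\ell(\un{w}_*)-\ell(\un{w})$ part of $\Hom^{\nless \un{w}_*}(\un{w},\un{w}_*)$ is spanned over $\Bbbk$ by $l_{\un{w}}$, so any $g$ as in Proposition~\ref{gobble} equals $\lambda\, l_{\un{w}}$. The whole content of the proposition is the normalization $\lambda=1$, and neither of your routes establishes it. In the inductive route, the step ``pull a topmost trivalent vertex to the top using planar isotopy and Zamolodchikov relations'' is not available: trivalent vertices do not slide past $2m_{st}$-valent vertices by isotopy or Zamolodchikov moves; one must use two-colour associativity \S\ref{2.7.6} and Elias' Jones--Wenzl relation \S\ref{2.7.7}, and these create extra terms (dots, diagrams ending in pitchforks, squares of $2m_{st}$-valent vertices) whose vanishing in $\SD^{\nless\un{w}_*}$ has to be proved. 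Controlling precisely these correction terms is what Lemmas~\ref{lem:vanish} and~\ref{lem:pitchfork}, the corollary on squares of $2m_{st}$-valent vertices, Proposition~\ref{cor:ll}, and the rank-two induction occupy in the paper; your proposal names this as ``the principal obstacle'' but does not resolve it. (Also, two rex moves between the same reduced expressions are in general \emph{not} equal in $\SD$; they agree only modulo $\SD_{<\un{w}_*}$ by \cite[Lemma 7.4]{EW2} --- enough for your base case, but not as stated.)

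The fallback via $\FC$ is based on a false claim. The merge trivalent vertex $B_sB_s\to B_s$ is $f\otimes g\otimes h\mapsto f\,\partial_s(g)\otimes h$, so it sends $1\otimes1\otimes1$ to $\partial_s(1)\otimes1=0$; indeed for degree reasons alone this is forced, since the map has degree $-1$ while $1^{\otimes3}$ spans the minimal degree of $B_sB_s$ and $B_s$ vanishes below the degree of $1\otimes1$. The trivalent vertices occurring in a morphism satisfying the hypotheses of Proposition~\ref{gobble} are exactly such merge vertices, so whenever $\ell(\un{w})>\ell(\un{w}_*)$ both $\FC(g)$ and $\FC(l_{\un{w}})$ annihilate $1^{\otimes(\ell(\un{w})+1)}$, and your comparison reads $0=0$, giving no information about $\lambda$. (Only the $2m_{st}$-valent vertices preserve the distinguished element.) As it stands, then, your argument proves only that $g$ is a scalar multiple of $l_{\un{w}}$; pinning down the scalar genuinely requires the diagrammatic analysis carried out in the paper, or an evaluation on elements not killed by $\partial_s$, which you have not supplied.
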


This is the version we will prove. We start with some preparatory lemmata. The first lemma shows that the
space in which the gobbling morphism(s) live is free of rank one, and
that this degree is the minimal non-zero degree:

\begin{lem} \label{deg}
  $\Hom^{\not < \un{w}_*}(\un{w}, \un{w}_*)$ is zero in degrees $< \ell(\un{w}_*) -
\ell(\un{w})$ and the degree ${\ell(\un{w}_*) -
  \ell(\un{w})}$ part of $\Hom^{\not < \un{w}_*}(\un{w}, \un{w}_*)$ is of rank 1.
\end{lem}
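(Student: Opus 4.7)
My plan is to reduce the lemma to a short combinatorial claim about $01$-sequences using the light leaves basis.

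By \cite[Theorem~1.1]{EW2} (as recalled in \S\ref{int}), $\Hom^{\nless \un{w}_*}(\un{w}, \un{w}_*)$ is free as a graded left $R$-module, with basis the light leaves $LL_{\un{w}, \un{e}}$ indexed by $01$-sequences $\un{e}$ with $(\un{w}^{\un{e}})_\bullet = \un{w}_*$, and each $LL_{\un{w}, \un{e}}$ lives in degree $\defect(\un{e})$. Since $R$ is nonnegatively graded with $R_0 = \Bbbk$, its degree $d$ component is
\[
\bigoplus_{\un{e}} R_{d-\defect(\un{e})},
\]
which vanishes whenever $d < \min_{\un{e}} \defect(\un{e})$ and has $\Bbbk$-dimension equal to the number of $\un{e}$ realizing the minimum when $d = \min_{\un{e}} \defect(\un{e})$. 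So it suffices to show that the minimum defect equals $\ell(\un{w}_*) - \ell(\un{w})$ and is attained by exactly one $\un{e}$, which will have to be the canonical subexpression of \eqref{eq:edef}.

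For the combinatorial claim, write $n = \ell(\un{w})$ and let $a, b, c, d$ denote the number of indices of decoration type $U1, U0, D1, D0$ respectively, so $a + b + c + d = n$ and $\defect(\un{e}) = b - d$. A $U1$ step increases the length of the partial product $w_i$ by one, a $D1$ step decreases it by one, and an $e_i = 0$ step preserves it. Hence the assumption $(\un{w}^{\un{e}})_\bullet = \un{w}_*$ gives the key length identity
\[
a - c = \ell(\un{w}_*),
\]
and in particular $a \ge \ell(\un{w}_*)$. Rewriting $\defect(\un{e}) = (n - \ell(\un{w}_*)) - 2(c+d)$ and using $c + d = n - a - b \le n - \ell(\un{w}_*)$, we obtain
\[
\defect(\un{e}) \;\ge\; \ell(\un{w}_*) - \ell(\un{w}),
\]
with equality iff $a = \ell(\un{w}_*)$ and $b = 0$. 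These two conditions force $c = a - \ell(\un{w}_*) = 0$, so the decoration of $\un{e}$ consists only of $U1$'s and $D0$'s. Such a decoration pins $\un{e}$ down inductively (one must set $e_i = 1$ precisely when $w_{i-1} s_i > w_{i-1}$) and recovers exactly the canonical subexpression of \eqref{eq:edef}.

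I do not anticipate any genuine obstacle here: the content of the proof is the EW2 basis theorem (which converts ``minimal-degree morphism'' into ``minimal-defect subexpression'') combined with the elementary length accounting $a - c = \ell(\un{w}_*)$ and the nonnegativity of $a$, $b$, $c$. The only thing worth double-checking in the writeup is that \cite[Theorem~1.1]{EW2} indeed applies to the quotient $\SD^{\nless \un{w}_*}$ in the form stated in \S\ref{int}, i.e.\ that the images of the light leaves form a graded $R$-basis with degrees equal to the defects.
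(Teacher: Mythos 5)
Your proposal is correct and follows essentially the same route as the paper: both invoke the light leaves basis of $\Hom^{\nless \un{w}_*}(\un{w}, \un{w}_*)$ from \cite[Theorem 1.1]{EW2} and then bound the defect combinatorially, showing that $\defect(\un{e}') \ge \ell(\un{w}_*) - \ell(\un{w})$ with equality only for the canonical subexpression of \eqref{eq:edef}. Your length-accounting via $a - c = \ell(\un{w}_*)$ is just a slightly more explicit version of the paper's observation that the number of $0$'s is at most $\ell(\un{w}) - \ell(\un{w}_*)$, so there is nothing further to add.
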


\begin{proof} We know that $\Hom^{\not < \un{w}_*}(\un{w}, \un{w}_*)$
  has a basis given by light leaf morphisms corresponding to
  01-sequences $\un{e}'$ with $(\un{w}^{\un{e}'})_{\bullet} =
  \un{w}_*$. Notice that $(\un{w}^{\un{e}'})_{\bullet} =\un{w}_*$ implies that the number of $0$'s in $\un{e}'$ is less than or equal to $\ell(\un{w})-\ell(\un{w}_*)$. Hence $\defect(\un{e}') \ge \ell(\un{w}_*) -
    \ell(\un{w})$.  If the equality holds, then there are exactly $\ell(\un{w}_*)$ $U1$'s and exactly $\ell(\un{w})-\ell(\un{w}_*)$ $D0$'s in $\un{e}'$. In this case, $\un{e'}$ equals $\un{e}$ defined above. 
\end{proof}

\begin{lem} \label{lem:vanish}
  Suppose that $f : \un{w} \to \un{w}_*$ is a morphism in $\SD^{\not < \un{w}_*}$
  of degree $\ell(\un{w}_*) - \ell(\un{w})$. Let $\un{e}$ be a
  subsequence of $\un{w}$ such that $\un{e}$ has at least one $D0$ or
  $D1$. Fix a choice of light leaf morphism $LL_{\un{w},
    \un{e}} : \un{w} \to (\un{w}^{\un{e}})_\bullet$ in $\SD$. In $\SD^{\not < \un{w}_*}$ we have:
\begin{gather*}
  \begin{array}{c}
    \begin{tikzpicture}[scale=0.3]
      \draw (-2.5,4) -- (2.5,4) -- (4,0) -- (-4,0) -- (-2.5,4);
   \node at (0,2) {$f$};
       \foreach \x in {-2,-0.5} \draw[blue] (\x,4) -- (\x,4.5);
      \foreach \x in {-1.5,0.5,1.5} \draw[green] (\x,4) -- (\x,4.5);
      \foreach \x in {1,2} \draw[red] (\x,4) -- (\x,4.5);
      \foreach \x in {-1,0} \draw[purple] (\x,4) -- (\x,4.5);
      \draw (-3,-4) -- (3,-4) -- (4,0) -- (-4,0) -- (-3,-4);
   \node at (0,-2) {$\overline{LL_{\un{w}, \un{e}}}$};
       \foreach \x in {-2,-0.5} \draw[blue] (\x,-4) -- (\x,-4.5);
      \foreach \x in {-2.5,-1.5,0.5,1.5} \draw[green] (\x,-4) -- (\x,-4.5);
      \foreach \x in {1,2} \draw[red] (\x,-4) -- (\x,-4.5);
      \foreach \x in {-1,0,2.5} \draw[purple] (\x,-4) -- (\x,-4.5);
    \end{tikzpicture}
  \end{array} = 0.
\end{gather*}
\end{lem}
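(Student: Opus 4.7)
Let $y := (\un{w}^{\un{e}})_\bullet$; by \S\ref{demprod}, $y \le \un{w}_*$. Choose $\un{y}$ to be the reduced expression for $y$ that arises as the target of our fixed light leaf, so that $\overline{LL_{\un{w},\un{e}}}$ is a morphism $\un{y} \to \un{w}$ and the composition in question lies in $\Hom^{\nless \un{w}_*}(\un{y}, \un{w}_*)$. The plan is to split on whether $y < \un{w}_*$ or $y = \un{w}_*$.

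In the first case, $\un{y}$ is a reduced expression for an element $y < \un{w}_*$, and any morphism out of $\un{y}$ trivially factors through $\un{y}$ via $g = g \circ \id_{\un{y}}$. Such a morphism therefore lies in the ideal $\SD_{<\un{w}_*}$ and vanishes in the quotient $\SD^{\nless \un{w}_*}$ by definition.

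In the second case $y = \un{w}_*$, I would exploit that $\End^{\nless \un{w}_*}(\un{w}_*) = R$ is concentrated in non-negative degrees (see \S\ref{int}), so it suffices to show the composition sits in strictly negative degree. Since the degree of a light leaf equals the defect of its $01$-sequence---each $U0$ contributes a dot of degree $+1$, each $D0$ a trivalent vertex of degree $-1$, while $U1$ and $D1$ steps contribute nothing---the composition has degree $(\ell(\un{w}_*) - \ell(\un{w})) + \defect(\un{e})$. Writing $a, b, c, d$ for the number of entries of $\un{e}$ of type $U1, U0, D1, D0$, the equalities $a - c = \ell(y) = \ell(\un{w}_*)$ and $a + b + c + d = \ell(\un{w})$ yield $b + d = \ell(\un{w}) - \ell(\un{w}_*) - 2c$, so
\[
(\ell(\un{w}_*) - \ell(\un{w})) + (b - d) \le (\ell(\un{w}_*) - \ell(\un{w})) + (b + d) - 2d = -2(c + d) \le -2,
\]
using the hypothesis $c + d \ge 1$. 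This forces the composition to vanish.

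The main point needing care is the identification $\deg \overline{LL_{\un{w},\un{e}}} = \defect(\un{e})$, which is already implicit in the proof of Lemma~\ref{deg} and comes directly from the inductive definition of the light leaves. Once this is in hand, the two-case dichotomy above closes the lemma without any further diagrammatic input.
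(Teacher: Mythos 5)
Your proof is correct, but it takes a genuinely different route from the paper. The paper argues locally: it peels $\overline{LL_{\un{w},\un{e}}}$ apart one generator at a time, observing that precomposing $f$ with a $2m_{st}$-valent vertex or a dot either preserves the shape of $f$ (a map $\un{v} \to \un{w}_*$ of degree $\ell(\un{w}_*)-\ell(\un{v})$ with $\un{v}_* = \un{w}_*$) or is already zero in the quotient, and that the presence of a $D0$ or $D1$ forces an eventual precomposition with a trivalent vertex or cup, which is then killed by the degree bound of Lemma~\ref{deg} applied at that intermediate stage. You instead make a single global degree count: with $y=(\un{w}^{\un{e}})_\bullet$, either $y<\un{w}_*$, in which case the composite has source a reduced expression for $y$ and hence lies tautologically in the ideal $\SD_{<\un{w}_*}$, or $y=\un{w}_*$, in which case the composite is an element of $\End^{\nless \un{w}_*}(x)=R$ of degree $(\ell(\un{w}_*)-\ell(\un{w}))+\defect(\un{e})=-2(\sharp D1+\sharp D0)\le -2<0$, hence zero; your arithmetic with $a,b,c,d$ is right (in fact the inequality is an equality), and the input $\deg LL_{\un{w},\un{e}}=\defect(\un{e})$ is indeed the same homogeneity fact the paper already uses in Lemma~\ref{deg}. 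What each approach buys: yours is shorter, avoids the case analysis for dots and the implicit induction over the pieces of the light leaf, and only needs the defect-degree bookkeeping plus $\End^{\nless x}(x)=R$ (equivalently Lemma~\ref{deg} with reduced source); the paper's local formulation, on the other hand, applies verbatim to precomposition with diagrams that are not full light leaves, which is exactly how it is reused to prove Lemma~\ref{lem:pitchfork} (``proved in the same way''), so if you adopted your argument you would still need a separate (though similar) treatment of the pitchfork lemma, e.g.\ by noting a pitchfork has degree $0$ while dropping the source length by $2$.
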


\begin{proof} When we precompose $f$ with a $2m$-valent vertex we
  obtain a morphism of the same form. If we precompose $f$ with a dot 
$\begin{array}{c}\tikz[xscale=0.3,yscale=-0.3]{\draw[dashed] (2,-1) rectangle (4,1);
\draw[color=blue] (3,-1) to (3,0);
\node[circle,fill,draw,inner sep=0mm,minimum size=1mm,color=blue] at
(3,0) {};}\end{array}$ to obtain a map $g : \un{w}' \to \un{w}
\stackrel{f}{\to} \un{w}_*$ then there are two cases:
\begin{enumerate}
\item if $\un{w}'_* = \un{w}_*$ then our new morphism $g$ is of the
  same form as $f$;
\item if $\un{w}'_* \ne \un{w}_*$ then $\un{w}'_* < \un{w}_*$ and
  our new morphism $g$ is zero in $\SD^{\not < \un{w}_*}$.
\end{enumerate}
Hence by our assumptions on $\un{e}$ we can assume that we precompose $f$ with a
trivalent vertex $\begin{array}{c}\tikz[xscale=0.3,yscale=0.3]{\draw[dashed] (-1,-1) rectangle (1,1);
\draw[color=blue] (-0.5,1) to (0,0) to (0.5,1);
\draw[color=blue] (0,0) to (0,-1);}\end{array}$ or cup $\begin{array}{c}\tikz[xscale=0.4,yscale=0.3]{\draw[dashed] (-1,-1) rectangle (1,1);
\draw[color=blue] (-0.5,1) to[out=-90,in=180] (0,-0.25)
to[out=0,in=-90] (0.5,1);}\end{array}$ (which factors through a
trivalent vertex).
Then the fact that these morphisms are 0
follows from the previous lemma by degree considerations.
\end{proof}

Let us say that an $S$-graph \emph{ends in a pitchfork} if it is
equivalent modulo isotopy and the Frobenius relations to an $S$-graph of the form
\[
  \begin{array}{c}
 \begin{tikzpicture}[scale=0.5]
      \foreach \x in {1.5,5} \node at (\x,-0.75) {$\dots$};
       \def\x{3.25}; \def\y{-1};
       \draw[red] (2.5,0) to[out=-90,in=180] (\x,\y) to[out=0,in=-90] (4,0);
       \draw[red] (\x,\y) -- (\x,-1.5);
       \draw[blue] (\x,0) -- (\x,-0.5);
 \node[circle,fill,draw,inner
      sep=0mm,minimum size=1mm,color=blue] at (\x,-0.5) {};
\draw[dashed,gray] (0,-1.5) rectangle (6.5,-3);
\node at (\x,-2.25) {$G$};
    \end{tikzpicture}
  \end{array}
\]
for some $S$-graph $G$. For example, the morphisms
\begin{gather*}
   \begin{array}{c}
 \begin{tikzpicture}[scale=0.5]
       \def\x{3.25}; \def\y{-1};
       \draw[red] (2.5,0) to[out=-90,in=180] (\x,\y) to[out=0,in=-90] (4,0);
       \draw[blue] (\x,0) -- (\x,-0.5);
 \node[circle,fill,draw,inner
      sep=0mm,minimum size=1mm,color=blue] at (\x,-0.5) {};
    \end{tikzpicture}
  \end{array} \qquad \text{and} \qquad
   \begin{array}{c}
\tikz[scale=0.7]{
\draw[color=red] (-45:1cm) -- (0.3,0) -- (45:1cm);
\draw[color=red] (135:1cm) -- (-0.3,0) -- (-135:1cm);
\draw[color=red] (-0.3,0) -- (0.3,0);
 \draw[blue] (90:0.5) -- (90:1);
 \node[circle,fill,draw,inner
      sep=0mm,minimum size=1mm,color=blue] at (90:0.5)  {};
}
  \end{array} 
\end{gather*}
both end in pitchforks. The following lemma is proved in the same way
as the previous lemma.

\begin{lem} \label{lem:pitchfork}
  Suppose that $f : \un{w} \to \un{w}_*$ is a morphism in $\SD^{\not < \un{w}_*}$
  of degree $\ell(\un{w}_*) - \ell(\un{w})$. Then $f$ is ``killed by all
  pitchforks'': if $u_i u_{i+1} u_{i+2} = s t s $ for some $i$ and simple reflections
  $s, t \in S$ then:
\begin{gather*}
  \begin{array}{c}
    \begin{tikzpicture}[scale=0.5]
      \draw (-0.5,4) -- (3.5,4) -- (7,0) -- (-0.5,0) -- (-0.5,4);
   \node at (2.5,2) {$f$};
       \foreach \x in {0,2} \draw[blue] (\x,4) -- (\x,4.5);
\foreach \x in {0.5,1.5,2.5} \draw[green] (\x,4) -- (\x,4.5);
      \foreach \x in {1,3} \draw[red] (\x,4) -- (\x,4.5);
       \foreach \x in {0,6.5} \draw[green] (\x,0) -- (\x,-1.5);
      \foreach \x in {1.5,5} \node at (\x,-0.75) {$\dots$};
       \def\x{3.25}; \def\y{-1};
       \draw[red] (2.5,0) to[out=-90,in=180] (\x,\y) to[out=0,in=-90] (4,0);
       \draw[red] (\x,\y) -- (\x,-1.5);
       \draw[blue] (\x,0) -- (\x,-0.5);
 \node[circle,fill,draw,inner
      sep=0mm,minimum size=1mm,color=blue] at (\x,-0.5) {};
\node[scale=0.5] at (0,0.2) {$u_1$};
\node[left,scale=0.5] at (2.8,0.2) {$u_i$};
\node[scale=0.5] at (\x,0.2) {$u_{i+1}$};
\node[right,scale=0.5] at (3.7,0.2) {$u_{i+2}$};
    \end{tikzpicture}
  \end{array} = 0.
\end{gather*}
\end{lem}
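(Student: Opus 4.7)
The plan is to argue by a straight degree count, following the template of the proof of Lemma \ref{lem:vanish}. The key observation is that the pitchfork, as a standalone morphism, has degree $0$ but strictly shortens the source expression by $2$, which will force the composite with $f$ to land below the minimal non-zero degree of the relevant hom space.

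First, I would view the pitchfork $P$ as a morphism $\un{w}' \to \un{w}$, where $\un{w}' = u_1 \dots u_{i-1} \, s \, u_{i+3} \dots u_n$ is obtained from $\un{w}$ by replacing the consecutive letters $u_i u_{i+1} u_{i+2} = sts$ by a single $s$. The pitchfork consists of exactly one trivalent vertex on the $s$-colored strands (degree $-1$) and one dot on the middle $t$-colored strand (degree $+1$), so $\deg P = 0$. Hence the composite
\[
f \circ P : \un{w}' \longrightarrow \un{w} \longrightarrow \un{w}_*
\]
has degree $\deg(f) + \deg(P) = \ell(\un{w}_*) - \ell(\un{w})$, while $\ell(\un{w}') = \ell(\un{w}) - 2$.

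Next, I would invoke (the argument of) Lemma \ref{deg} with $\un{w}'$ in place of $\un{w}$. The proof there relies only on the light leaves basis of $\Hom^{\nless \un{w}_*}(\un{w}', \un{w}_*)$ and the observation that any $01$-sequence $\un{e}'$ of $\un{w}'$ with $(\un{w}'^{\un{e}'})_{\bullet} = \un{w}_*$ contains at most $\ell(\un{w}') - \ell(\un{w}_*)$ zeros, so $\defect(\un{e}') \ge \ell(\un{w}_*) - \ell(\un{w}')$. This argument goes through unchanged, and shows that every non-zero morphism $\un{w}' \to \un{w}_*$ in $\SD^{\nless \un{w}_*}$ has degree at least $\ell(\un{w}_*) - \ell(\un{w}') = \ell(\un{w}_*) - \ell(\un{w}) + 2$. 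Since $\deg(f \circ P) = \ell(\un{w}_*) - \ell(\un{w})$ is strictly smaller, we must have $f \circ P = 0$ in $\SD^{\nless \un{w}_*}$, which is the claim. In the degenerate case $\un{w}_* \not\le (\un{w}')_*$, no such subexpression exists in the first place and $\Hom^{\nless \un{w}_*}(\un{w}', \un{w}_*) = 0$ for trivial reasons, so the conclusion is automatic.

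I do not anticipate any real obstacle here: once one recognizes that the pitchfork is a degree-$0$ morphism that strictly shortens the source by $2$, the vanishing follows immediately from the lower-degree bound established in (the proof of) Lemma \ref{deg}. No case analysis of $f$ is needed, and no further structural information about the light leaves beyond the defect count is required.
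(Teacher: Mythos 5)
Your argument is correct, and it runs on the same engine as the paper's: the defect lower bound coming from the light leaves basis, i.e.\ the content of (the proof of) Lemma \ref{deg}. The paper disposes of Lemma \ref{lem:pitchfork} by declaring it ``proved in the same way as'' Lemma \ref{lem:vanish}, which means peeling the pitchfork off one generator at a time: first the dot on the $t$-strand, with a case split according to whether the Demazure product of the source drops (then the composite is already zero in $\SD^{\nless \un{w}_*}$) or is unchanged (then the new morphism is again ``of the same form'' as $f$), and then the trivalent vertex, which is killed by degree considerations. You instead treat the pitchfork in one shot: it has degree $0$ and shortens the source from $\un{w}$ to $\un{w}'$ with $\ell(\un{w}')=\ell(\un{w})-2$, so the composite has degree $\ell(\un{w}_*)-\ell(\un{w}')-2$, strictly below the minimal degree allowed in $\Hom^{\nless \un{w}_*}(\un{w}',\un{w}_*)$. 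The one point you rightly flag is that Lemma \ref{deg} as stated takes the target to be the Demazure product of the source, whereas here $\un{w}_*$ need not equal $(\un{w}')_*$; but its proof uses only the light leaves basis of \S\ref{int} together with the count that a subexpression of $\un{w}'$ evaluating to $x$ has at least $\ell(x)$ ones, hence defect at least $\ell(x)-\ell(\un{w}')$, and this applies verbatim to $\Hom^{\nless x}(\un{w}',x)$ for any source expression and any $x$ (the space being zero if no such subexpression exists). What your version buys is the elimination of the star-product case analysis, since the uniform degree bound makes it irrelevant whether the dot changes the Demazure product; what the paper's stepwise template buys is that it is the same induction one genuinely needs for Lemma \ref{lem:vanish}, where the morphism being absorbed is a whole light leaf and the bookkeeping is most easily maintained one generator at a time.
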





This lemma has as a consequence that that adding the ``square'' of a
$2m_{st}$-valent vertex does not change a gobbling morphism:

\begin{cor}
  Suppose that $f : \un{w} \to \un{w}_*$ is a morphism in $\SD^{\nless
   \un{w}_*}$ of degree $\ell(\un{w}_*) - \ell(\un{w})$. Let $i$ be such that $u_i u_{i+1} \dots u_{i+m_{st}-1} =
  sts \dots$. Then
\begin{gather*}
  \begin{array}{c}
    \begin{tikzpicture}[scale=0.5]
      \draw (-0.5,4) -- (3.5,4) -- (7,0) -- (-0.5,0) -- (-0.5,4);
   \node at (2.5,2) {$f$};
       \foreach \x in {0,2} \draw[blue] (\x,4) -- (\x,4.5);
\foreach \x in {0.5,1.5,2.5} \draw[brown] (\x,4) -- (\x,4.5);
      \foreach \x in {1,3} \draw[red] (\x,4) -- (\x,4.5);
       \foreach \x in {0,6.5} \draw[brown] (\x,0) -- (\x,-2);
      \foreach \x in {1.5,5} \node at (\x,-1) {$\dots$};
      \def\c{red};
      \foreach \x in {2.5,3.7}
      \draw[\c] (\x,-1) to[out=90,in=-90] (6.5-\x,0);
      \def\c{blue};
      \foreach \x in {2.8,4}
      \draw[\c] (\x,-1) to[out=90,in=-90] (6.5-\x,0);
      \def\c{blue};
      \foreach \x in {2.5,3.7}
      \draw[\c] (\x,-2) to[out=90,in=-90] (6.5-\x,-1);
      \def\c{red};
      \foreach \x in {2.8,4}
      \draw[\c] (\x,-2) to[out=90,in=-90] (6.5-\x,-1);
\foreach \y in {-0.15,-1,-1.85} \node[scale=0.5] at (3.25,\y) {$\dots$};
\node[scale=0.5] at (0,0.2) {$u_1$};
\node[left,scale=0.5] at (2.8,0.2) {$u_i$};
\node[right,scale=0.5] at (3.7,0.2) {$u_{i+m_{st}-1}$};
    \end{tikzpicture}
  \end{array} =
  \begin{array}{c}
    \begin{tikzpicture}[scale=0.5]
      \draw (-0.5,4) -- (3.5,4) -- (7,0) -- (-0.5,0) -- (-0.5,4);
   \node at (2.5,2) {$f$};
       \foreach \x in {0,2} \draw[blue] (\x,4) -- (\x,4.5);
\foreach \x in {0.5,1.5,2.5} \draw[brown] (\x,4) -- (\x,4.5);
      \foreach \x in {1,3} \draw[red] (\x,4) -- (\x,4.5);
       \foreach \x in {0,6.5} \draw[brown] (\x,0) -- (\x,-2);
       \foreach \x in {2.8,4} \draw[red] (\x,0) -- (\x,-2);
       \foreach \x in {2.5,3.7} \draw[blue] (\x,0) -- (\x,-2);
      \foreach \x in {1.5,5} \node at (\x,-1) {$\dots$};
\foreach \y in {-1} \node[scale=0.5] at (3.25,\y) {$\dots$};
\node[scale=0.5] at (0,0.2) {$u_1$};
\node[left,scale=0.5] at (2.8,0.2) {$u_i$};
\node[right,scale=0.5] at (3.7,0.2) {$u_{i+m_{st}-1}$};
    \end{tikzpicture}
  \end{array}.
\end{gather*}
\end{cor}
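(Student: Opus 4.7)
The plan is to deduce the corollary directly from Proposition \ref{gobble} and Lemma \ref{deg}, with essentially no further computation.

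First, I would use Lemma \ref{deg} to reduce to the case $f = l_{\un{w}}$. That lemma says the degree $\ell(\un{w}_*)-\ell(\un{w})$ part of $\Hom^{\nless \un{w}_*}(\un{w},\un{w}_*)$ is a free $\Bbbk$-module of rank one, spanned by $l_{\un{w}}$. Hence $f = c\cdot l_{\un{w}}$ for some $c \in \Bbbk$. Since both sides of the asserted equality are $\Bbbk$-linear in $f$, it suffices to treat $f = l_{\un{w}}$, which may be represented by an $S$-graph consisting only of $2m_{st}$-valent vertices and exactly $\ell(\un{w})-\ell(\un{w}_*)$ trivalent vertices.

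Next, observe that the LHS is represented by the $S$-graph obtained from such a representative of $f$ by inserting, in the strip from position $i$ to position $i+m_{st}-1$, the two stacked $2m_{st}$-valent vertices pictured. Since $2m_{st}$-valent vertices are among the permitted vertex types and no new trivalent vertices are introduced, the augmented diagram still consists only of $2m_{st}$-valent vertices and exactly $\ell(\un{w})-\ell(\un{w}_*)$ trivalent vertices, and it represents a morphism $\un{w} \to \un{w}_*$ in $\SD^{\nless \un{w}_*}$. By Proposition \ref{gobble} (and the definition of the gobbling morphism $G_{\un{w}}$), this morphism is equal to $G_{\un{w}} = l_{\un{w}} = f$, which is exactly the RHS.

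No step is a real obstacle: the only thing to verify is the bookkeeping that precomposition with the square of a $2m_{st}$-valent vertex neither alters the trivalent-vertex count nor introduces a forbidden vertex type, and this is immediate from the pictures. In short, the corollary is a formal consequence of the two cited results: the class of diagrams covered by Proposition \ref{gobble} is visibly closed under inserting $2m_{st}$-valent vertices, and Lemma \ref{deg} keeps the ambient morphism space small enough that this closure property forces the desired equality.
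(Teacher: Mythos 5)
Your argument has a genuine circularity problem: you prove the corollary by quoting Proposition \ref{gobble} (equivalently Proposition \ref{gobblenew}), but at this point in the paper that proposition is only \emph{stated}, not proved, and its proof depends on this very corollary. Concretely, the corollary is used in the proof of Proposition \ref{cor:ll} (``By applying the previous corollary we can replace $\b'$ by a rex move of the form\dots''), and Proposition \ref{cor:ll}, together with the inductive rank-two argument, is what eventually establishes Proposition \ref{gobblenew}. So the reduction ``the composite diagram is again of the form covered by Proposition \ref{gobble}, hence equals $G_{\un{w}}$'' assumes exactly the unicity statement that this chain of lemmas is building toward. Your first step (using Lemma \ref{deg} to write $f = c\, l_{\un{w}}$ with $c \in \Bbbk$) is fine but also unnecessary, and it cannot rescue the second step: knowing the Hom space is rank one in this degree tells you nothing about whether precomposition with the square of the $2m_{st}$-valent vertex acts on it by $1$ rather than by some other scalar (or by $0$), which is precisely the content to be proved.

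The paper's actual proof avoids this by working one level lower in the calculus: by two-color associativity (\S\ref{2.7.6}) and Elias' Jones--Wenzl relation (\S\ref{2.7.7}), as recorded in \cite[Claim 6.7]{EliasCathedral}, the square of the $2m_{st}$-valent vertex equals the identity plus a linear combination $P$ of diagrams all of which end in pitchforks; then Lemma \ref{lem:pitchfork}, which holds for an \emph{arbitrary} morphism $f$ of degree $\ell(\un{w}_*)-\ell(\un{w})$, kills every term of $f \circ P$, leaving $f$. If you want to repair your proposal, you need an argument of this kind (or some other input independent of Proposition \ref{gobble}) for the identity ``square of the $2m_{st}$-vertex $=$ identity $+$ pitchfork terms''; simply observing that the augmented diagram is again built from allowed vertices does not suffice.
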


\begin{proof} As explained in \cite[Claim 6.7]{EliasCathedral}, two-color associativity \S\ref{2.7.6} and the Jones-Wenzl relation \S\ref{2.7.7} imply that we can write
\begin{gather*}
  \begin{array}{c}
    \begin{tikzpicture}[scale=0.9]
      \def\c{red};
      \foreach \x in {2.5,3.7}
      \draw[\c] (\x,-1) to[out=90,in=-90] (6.5-\x,0);
      \def\c{blue};
      \foreach \x in {2.8,4}
      \draw[\c] (\x,-1) to[out=90,in=-90] (6.5-\x,0);
      \def\c{blue};
      \foreach \x in {2.5,3.7}
      \draw[\c] (\x,-2) to[out=90,in=-90] (6.5-\x,-1);
      \def\c{red};
      \foreach \x in {2.8,4}
      \draw[\c] (\x,-2) to[out=90,in=-90] (6.5-\x,-1);
\foreach \y in {-0.15,-1,-1.85} \node[scale=0.5] at (3.25,\y) {$\dots$};
    \end{tikzpicture}
  \end{array}
=
  \begin{array}{c}
 \begin{tikzpicture}[scale=0.9]
       \foreach \x in {2.8,4} \draw[red] (\x,0) -- (\x,-2);
       \foreach \x in {2.5,3.7} \draw[blue] (\x,0) -- (\x,-2);
       \node[scale=0.5] at (3.25,-1) {$\dots$};
    \end{tikzpicture}
  \end{array}+ P
\end{gather*}
where all terms in $P$ end in pitchforks. Now the result follows from Lemma \ref{lem:pitchfork}.
\end{proof}

\begin{prop} \label{cor:ll} If $\un{e}$ is defined as in
  \eqref{eq:edef}, then the images of any two choices of light
  leaves morphisms $L_{\un{w},\un{e}}$ and $L_{\un{w},\un{e}}'$ in
  $\SD^{\not < \un{w}_*}$ agree. 
\end{prop}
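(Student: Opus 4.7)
The plan: by Lemma~\ref{deg}, the $\Bbbk$-module $\Hom^{\nless \un{w}_*}(\un{w},\un{w}_*)$ in degree $\ell(\un{w}_*) - \ell(\un{w})$ is free of rank one. Both $L = L_{\un{w},\un{e}}$ and $L' = L'_{\un{w},\un{e}}$ are morphisms of this degree (each consists of $2m_{st}$-valent vertices and exactly $\ell(\un{w}) - \ell(\un{w}_*)$ trivalent vertices, so both satisfy the hypotheses of Proposition~\ref{gobble}). Hence $[L'] = c \cdot [L]$ in $\SD^{\nless \un{w}_*}$ for some scalar $c \in \Bbbk$, and the substance of the proposition is that $c = 1$.

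To establish $c=1$, I would trace through the inductive recipe defining the light leaves and bridge the choices by local moves that leave the morphism unchanged in $\SD^{\nless \un{w}_*}$. Since $\un{e}$ contains only $U1$'s and $D0$'s, any choice in the construction occurs at a $D0$-step, where one picks (i) a reduced expression $\un{v}$ for $w_i$ ending in $s_{i+1}$ and (ii) a rex move from the current top expression to $\un{v}$. Two choices in (ii) between the same source and target give equal morphisms already in $\SD$ itself, via the Zamolodchikov relations \S\ref{2.7.8}. Two choices in (i) can be bridged by composing with an additional rex move between the two admissible targets; using two-color associativity \S\ref{2.7.6} and the Jones-Wenzl relation \S\ref{2.7.7} (exactly as in the proof of the preceding corollary) this alters the light leaf only by the insertion of a square of $2m_{st}$-valent vertices of the very type absorbed by that corollary. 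Each such modification therefore preserves the class in $\SD^{\nless \un{w}_*}$.

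Applying these two principles inductively over the $D0$-steps of $\un{e}$ transforms $L$ into $L'$ without altering its image in $\SD^{\nless \un{w}_*}$, forcing $c=1$ and hence $[L] = [L']$. The main obstacle is the combinatorial bookkeeping in (i): one needs to verify that every difference in the intermediate choice of target reduced expression really manifests as a square of $2m_{st}$-valent vertices of the kind to which the preceding corollary applies, which requires a careful case analysis parallel to the argument for that corollary.
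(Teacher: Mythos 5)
Your reduction to showing $c=1$ via Lemma \ref{deg} is fine, and your overall strategy (induct over the construction of the light leaf, bridging the choices by local moves) is the same as the paper's. But there is a genuine gap at the step you treat as immediate: the claim that two rex moves with the same source and target ``give equal morphisms already in $\SD$ itself, via the Zamolodchikov relations'' is false. In $\SD$ two such rex moves differ in general: for instance the identity of $(s,t,s)$ and the composite $sts \to tst \to sts$ of two $2m_{st}$-valent vertices are not equal --- by two-colour associativity \S\ref{2.7.6} and the Jones--Wenzl relation \S\ref{2.7.7} their difference is a combination of diagrams ending in pitchforks, which is precisely why the paper needs the corollary on absorbing squares of $2m_{st}$-valent vertices, a statement that holds only after composing with a morphism of minimal degree, not in $\SD$ itself. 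The correct statement is \cite[Lemma 7.4]{EW2}: two rex moves between the same reduced expressions agree only modulo the ideal of morphisms factoring through strictly smaller elements.

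Moreover, even granting that weaker statement, it does not finish the argument, because the ambiguous rex move sits in the middle of the light leaf: its lower terms are lower with respect to the intermediate element $w_i$, not with respect to $\un{w}_*$, so they do not automatically die in $\SD^{\not< \un{w}_*}$ after composing with the upper part of the diagram. The paper closes exactly this gap with two ingredients your proposal lacks: Lemma \ref{lem:vanish} (a morphism of the minimal degree $\ell(\un{w}_*)-\ell(\un{w})$ is killed by precomposition with any light leaf whose decorated sequence contains a $D0$ or $D1$), and the final lemma of the section, which writes the difference of two rex moves as an $R$-linear combination of terms $\overline{LL_{\un{w}_2,\un{e}'}}\circ LL_{\un{w}_1,\un{e}}$ in which both $\un{e}$ and $\un{e}'$ contain a $D0$ or $D1$; the degree count proving that lemma is the real content. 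Your handling of the choice of target reduced expression at a $D0$-step (bridging by a square of $2m_{st}$-valent vertices absorbed by the corollary) does match the paper's manipulation of $\b'$, $\b''$, $\b'''$, but without the vanishing mechanism just described the induction does not close and $c=1$ is not established.
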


In the proof of the proposition we will need the notion of a rex move from \cite[\S 4.2]{EW2}: a rex
move $\b : \un{x} \to \un{x}'$ is a sequence of reduced expressions $\un{x} = \un{x}_0 \to
\un{x}_1 \to \dots \un{x}_m = \un{x}'$ such that each move $\un{x}_i
\to \un{x}_{i+1}$ involves the application of a single braid
relation ($st \dots \to ts \dots $ ($m_{st}$-factors) for
$m_{st} \ge 2$). Any two reduced expressions can be linked via rex
moves (a theorem of Matsumoto, see e.g. \cite[Theorem 1.9]{Lu}). Rex moves give morphisms in $\SD$ by composing the
corresponding $2m_{st}$-valent vertices (see \cite[\S 6.1]{EW2}).

\begin{proof}
  We prove the proposition by induction on the defect $\defect(\un{e})$ of
  $\un{e}$. If $\defect(\un{e}) = 0$ then a choice of light leaf
  morphism is simply a choice of reduced expression $\un{x}$ for
  $\un{w}_\bullet$ together with a rex move $\un{w} \to \un{x}$. The
  corollary follows in this case because the difference between any
  two rex moves lies in $\SD_{<
    \un{w}_*}$ (a consequence of Elias' Jones-Wenzl relation
  and the Zamolodchikov relations, see \cite[Lemma 7.4]{EW2}).

We now assume that $\defect(\un{e}) \ge 1$ and let $i+1$ denote the
position of the first $D0$ in $\un{e}$. We can draw two choices of light
leaves morphisms as
\[
\begin{array}{c} \begin{tikzpicture}[scale=0.4]
\draw (-0.5,0) rectangle (3.5,2); \node at (1.5,1) {$\b$}; 
\draw (-0.5,3) rectangle (7.5,5); \node at (4.5,4) {$l_{\un{w}'}$}; 
\foreach \x in {0,0.5,1,3} \draw(\x,-0.5) -- (\x,0); 
\node at (2,-0.25) {\dots};
\draw (7,-0.5) -- (7,3);\draw (6.5,-0.5) --(6.5,3);\draw (6,-0.5) --
(6,3);

\node at (5,1.5) {\dots};
\foreach \x in {0,0.5,2.5} \draw(\x,2) -- (\x,3); 
\draw[red] (3,2) -- (3,3);
\node at (1.75,2.5) {\dots};
\foreach \x in {0.5,2,3.5,7} \draw (\x,5) -- (\x,6); 

\draw[red] (3,2.5) to[out=0,in=90] (4,-0.5);

\node at (5.25,5.5) {\dots};
\draw [ decoration={brace,mirror,raise=0.1cm},decorate] (0,-0.5) to (3,-0.5);
\node at (1.5,-1.5) {$\un{w}_{\le i}$};
\draw [ decoration={brace,raise=0.1cm},decorate] (0,3) to (3,3);
\node at (1.75,4) {$\un{x}$};
\end{tikzpicture} \end{array}
\qquad \qquad
\begin{array}{c} \begin{tikzpicture}[scale=0.4]
\draw (-0.5,0) rectangle (3.5,2); \node at (1.5,1) {$\b'$}; 
\draw (-0.5,3) rectangle (7.5,5); \node at (4.5,4) {$l_{\un{w}'}'$}; 
\foreach \x in {0,0.5,1,3} \draw(\x,-0.5) -- (\x,0); 
\node at (2,-0.25) {\dots};
\draw (7,-0.5) -- (7,3);\draw (6.5,-0.5) --(6.5,3);\draw (6,-0.5) --
(6,3);

\node at (5,1.5) {\dots};
\foreach \x in {0,0.5,2.5} \draw(\x,2) -- (\x,3); 
\draw[red] (3,2) -- (3,3);
\node at (1.75,2.5) {\dots};
\foreach \x in {0.5,2,3.5,7} \draw (\x,5) -- (\x,6); 

\draw[red] (3,2.5) to[out=0,in=90] (4,-0.5);

\node at (5.25,5.5) {\dots};
\draw [ decoration={brace,mirror,raise=0.1cm},decorate] (0,-0.5) to (3,-0.5);
\node at (1.5,-1.5) {$\un{w}_{\le i}$};
\draw [ decoration={brace,raise=0.1cm},decorate] (0,3) to (3,3);
\node at (1.75,4) {$\un{x}'$};
\end{tikzpicture} \end{array}
\]
where $\un{x}, \un{x}'$ are reduced
expressions for $(\un{w}_{\le i})_\bullet$ both ending in
$s_{i+1}$ (coloured red in the above diagrams). By applying the previous corollary we can replace $\b'$ by
a rex move of the form:
\[
\begin{array}{c} \begin{tikzpicture}[scale=0.4]
\draw (-0.5,-1) rectangle (3.5,2); \node at (1.5,0) {$\b''$}; 
\foreach \x in {0,0.5,1,3} \draw(\x,-1.5) -- (\x,-1); 
\node at (2,-1.25) {\dots};
\draw [ decoration={brace,mirror,raise=0.1cm},decorate] (0,-1.5) to (3,-1.5);
\node at (1.5,-2.5) {$\un{w}_{\le i}$};
\foreach \x in {0,0.5,2.5} \draw(\x,2) -- (\x,2.5); 
\node at (1.5,2.25) {\dots};
\draw (-0.5,2.5) rectangle (2.75,4.5); \node at (1.5,3.5) {$\b'''$}; 
\foreach \x in {0,0.5,2.5} \draw(\x,4.5) -- (\x,5); 
\node at (1.5,4.75) {\dots};
\draw[red] (3,2) to (3,5);
\draw[red] (3,3.5) to[out=0,in=90] (4,-1.5);
\draw [ decoration={brace,raise=0.1cm},decorate] (0,5) to (3,5);
\node at (1.75,6) {$\un{x}'$};
\draw [ decoration={brace,mirror,raise=0.1cm},decorate] (0,2) to
(3,2); 
\node at (1.5,1.2) {$\un{x}$};
\end{tikzpicture} \end{array}
\]
Now we can slide $\b'''$ into $l_{\un{w}'}'$ in the right hand diagram
above. Hence we can assume
$\un{x} = \un{x}'$ in the above diagram. Now
we can apply induction to conclude that $l_{\un{w}'} =
l_{\un{w}'}'$. Thus we can assume that the above two diagrams are
identical except in the boxes corresponding to $\b$ and $\b'$.
Now the result follows from Lemma \ref{lem:vanish} and the lemma below.
\end{proof}

\begin{lem}
  Let $\b_1, \b_2$ be two rex moves from $\un{w}_1 \to \un{w}_2$,
  where $\un{w}_1$ and $\un{w}_2$ are reduced expressions. Then we can
  write
\begin{gather*}
  \begin{array}{c}
\begin{tikzpicture}[scale=0.4]
   \def\y{1.5}; \def\yt{2};
       \foreach \x in {-2,-0.5} \draw[blue] (\x,\y) -- (\x,\yt);
      \foreach \x in {-1.5,0.5,1.5} \draw[green] (\x,\y) -- (\x,\yt);
      \foreach \x in {1,2} \draw[red] (\x,\y) -- (\x,\yt);
      \foreach \x in {-1,0} \draw[purple] (\x,\y) -- (\x,\yt);
   \def\y{-1.5}; \def\yt{-2};
       \foreach \x in {2,0.5} \draw[blue] (\x,\y) -- (\x,\yt);
      \foreach \x in {1.5,-0.5,-1.5} \draw[green] (\x,\y) -- (\x,\yt);
      \foreach \x in {-1,-2} \draw[red] (\x,\y) -- (\x,\yt);
      \foreach \x in {1,0} \draw[purple] (\x,\y) -- (\x,\yt);
\draw (-2.5,\y) rectangle (2.5,-1*\y); \node at (0,0) {$\b_1$}; 
  \end{tikzpicture}
\end{array}-
  \begin{array}{c}
\begin{tikzpicture}[scale=0.4]
   \def\y{1.5}; \def\yt{2};
       \foreach \x in {-2,-0.5} \draw[blue] (\x,\y) -- (\x,\yt);
      \foreach \x in {-1.5,0.5,1.5} \draw[green] (\x,\y) -- (\x,\yt);
      \foreach \x in {1,2} \draw[red] (\x,\y) -- (\x,\yt);
      \foreach \x in {-1,0} \draw[purple] (\x,\y) -- (\x,\yt);
   \def\y{-1.5}; \def\yt{-2};
       \foreach \x in {2,0.5} \draw[blue] (\x,\y) -- (\x,\yt);
      \foreach \x in {1.5,-0.5,-1.5} \draw[green] (\x,\y) -- (\x,\yt);
      \foreach \x in {-1,-2} \draw[red] (\x,\y) -- (\x,\yt);
      \foreach \x in {1,0} \draw[purple] (\x,\y) -- (\x,\yt);
\draw (-2.5,\y) rectangle (2.5,-1*\y); \node at (0,0) {$\b_2$}; 
  \end{tikzpicture}
\end{array}
=
\sum_{\un{e}, \un{e}'} c_{\un{e}, \un{e}'}
  \begin{array}{c}
\begin{tikzpicture}[scale=0.4]
   \def\y{2}; \def\yt{2.5};
       \foreach \x in {-2,-0.5} \draw[blue] (\x,\y) -- (\x,\yt);
      \foreach \x in {-1.5,0.5,1.5} \draw[green] (\x,\y) -- (\x,\yt);
      \foreach \x in {1,2} \draw[red] (\x,\y) -- (\x,\yt);
      \foreach \x in {-1,0} \draw[purple] (\x,\y) -- (\x,\yt);
   \def\y{-2}; \def\yt{-2.5};
       \foreach \x in {2,0.5} \draw[blue] (\x,\y) -- (\x,\yt);
      \foreach \x in {1.5,-0.5,-1.5} \draw[green] (\x,\y) -- (\x,\yt);
      \foreach \x in {-1,-2} \draw[red] (\x,\y) -- (\x,\yt);
      \foreach \x in {1,0} \draw[purple] (\x,\y) -- (\x,\yt);
\draw (-2.5,\y) -- (-1.5,0) -- (1.5,0) -- (2.5,\y) -- (-2.5,\y);
\node at (0,-1) {$LL_{\un{w}_1, \un{e}}$} ;
\draw (-2.5,-1*\y) -- (-1.5,0) -- (1.5,0) -- (2.5,-1*\y) --
(-2.5,-1*\y);
\node at (0,1) {$\overline{LL_{\un{w}_2, \un{e}'}}$};
  \end{tikzpicture}
\end{array}
\end{gather*}
for homogenous $c_{\un{e}, \un{e}'} \in R$ and where $LL_{\un{w}_1,
  \un{e}}$, $LL_{\un{w}_2, \un{e}'}$ are light
leaf morphisms corresponding to subsequences $\un{e}, \un{e}'$
having at least one $D1$ or $D0$.
\end{lem}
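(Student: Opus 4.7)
The plan is to invoke the double leaves basis theorem of \cite[Theorem 6.11]{EW2} together with the rex-move comparison lemma \cite[Lemma 7.4]{EW2}. Since $\un{w}_1$ and $\un{w}_2$ are reduced expressions for a common element $w := (\un{w}_1)_\bullet = (\un{w}_2)_\bullet$, the double leaves theorem provides a basis of the free graded $R$-module $\Hom_{\SD}(\un{w}_1, \un{w}_2)$ consisting of compositions $\overline{LL_{\un{w}_2, \un{e}'}} \circ LL_{\un{w}_1, \un{e}}$ indexed by pairs of subexpressions with $(\un{w}_1^{\un{e}})_\bullet = (\un{w}_2^{\un{e}'})_\bullet$. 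I would expand $\beta_1 - \beta_2$ in this basis and aim to show that all coefficients corresponding to subexpressions without any $D0$ or $D1$ vanish.

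The next step is an elementary combinatorial observation: for a reduced expression $\un{w}$ of $w$ (of length $\ell(w)$), any subexpression $\un{e}$ with $(\un{w}^{\un{e}})_\bullet = w$ must contain at least $\ell(w)$ ones, hence must be the trivial all-$1$'s subexpression. In this case reducedness forces every decoration to be $U1$. Conversely, $\un{e}$ contains at least one $D0$ or $D1$ precisely when $(\un{w}^{\un{e}})_\bullet < w$. Applying this to both $\un{w}_1$ and $\un{w}_2$ shows that the double leaves in which neither $\un{e}$ nor $\un{e}'$ contains a $D0$ or $D1$ reduce to a single element, namely the one indexed by the pair of all-$1$ subexpressions. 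Thus it suffices to prove that the coefficient of this unique all-$U1$ double leaf in the expansion of $\beta_1 - \beta_2$ is zero.

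This final step is exactly where \cite[Lemma 7.4]{EW2} enters: any two rex moves between $\un{w}_1$ and $\un{w}_2$ differ by an element of the ideal $\SD_{<w}$ of morphisms factoring through a reduced expression $\un{y}$ with $\un{y}_\bullet < w$. Under the double leaves basis, $\SD_{<w}$ corresponds precisely to the $R$-span of double leaves with $(\un{w}_1^{\un{e}})_\bullet < w$, which by the combinatorial observation is the span of the double leaves indexed by pairs $(\un{e}, \un{e}')$ in which at least one $D0$ or $D1$ appears. Hence $\beta_1 - \beta_2$ has no contribution from the forbidden all-$U1$ double leaf, yielding the required expression.

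There is no serious obstacle: everything reduces to the two cited results from \cite{EW2}. The only slightly delicate point is the combinatorial identification in paragraph two, which crucially uses the reducedness of $\un{w}_1$ and $\un{w}_2$ to rule out spurious subexpressions whose Demazure product is still $w$.
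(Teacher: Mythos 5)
Your overall frame (expand $\b_1-\b_2$ in the double leaves basis of \cite[Theorem 6.11]{EW2}, and use \cite[Lemma 7.4]{EW2} to place it in the ideal $\SD_{<w}$, where $w:=(\un{w}_1)_\bullet=(\un{w}_2)_\bullet$) agrees with the paper's first step, but the combinatorial claim in your second paragraph is false, and with it the reduction on which your argument rests. It is not true that a subexpression $\un{e}$ of a reduced expression contains a $D0$ or $D1$ precisely when $(\un{w}^{\un{e}})_\bullet<w$: for $\un{w}=(s,t)$ and $\un{e}=(1,0)$ the decorated sequence is $U1\,U0$, the product is $s<st$, and no $D0$ or $D1$ occurs. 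Hence the span of double leaves with $(\un{w}_1^{\un{e}})_\bullet<w$ is \emph{not} the span of those in which a $D0$ or $D1$ appears; there are many pairs $(\un{e},\un{e}')$ whose decorations consist only of $U1$'s and $U0$'s, and these are exactly the terms the lemma asserts cannot contribute. Showing only that the all-$U1$ double leaf has coefficient zero (equivalently, that $\b_1-\b_2\in\SD_{<w}$) is strictly weaker than the statement, and also too weak for its intended use, since Lemma \ref{lem:vanish} requires an honest $D0$ or $D1$ in the subexpression being precomposed.

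The missing ingredient is a degree/defect count, which is how the paper closes precisely this gap. The left-hand side has degree $0$ and a light leaf has degree equal to its defect, so any nonzero term satisfies $\defect(\un{e})+\defect(\un{e}')+\deg c_{\un{e},\un{e}'}=0$. If $\un{e}$ had no $D0$ or $D1$, then $\defect(\un{e})=\ell(\un{w}_1)-\ell(x)$ with $x=(\un{w}_1^{\un{e}})_\bullet$; since always $\defect(\un{e}')\ge \ell(x)-\ell(\un{w}_2)$ and $\deg c_{\un{e},\un{e}'}\ge 0$, equality is forced throughout, and minimality of $\defect(\un{e}')$ forces $\un{e}'$ to consist only of $U1$'s and $D0$'s. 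Because $x<w$ there is at least one $D0$, and at the first such position all earlier letters are genuine $U1$'s, so the length of the corresponding prefix of $\un{w}_2$ would drop, contradicting that $\un{w}_2$ is reduced. You need to add this argument (and its mirror image to handle $\un{e}$ versus $\un{e}'$); the appeal to \cite[Lemma 7.4]{EW2} and the basis theorem alone does not suffice.
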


\begin{proof}
  We know that the left hand side vanishes in $\SD^{\not <
    {\un{w}_1}_\bullet}$ and hence all terms on the right hand side factor
  through some $\un{y}$ with $\un{y}_\bullet <
  (\un{w}_1)_\bullet$. Assume for contradiction that there is a term
  on the right hand side with  $c_{\un{e}, \un{e}'} \ne 0$, where
  $\un{e}$ is a subsequence of   $\un{w}_1$ without $D1$ or $D0$. In
  particular, $\defect(\un{e}) = \ell(\un{w}_1) - \ell(
  (\un{w_1}^{\un{e}})_\bullet)$. For degree reasons $\defect(\un{e}') +
  \deg(c_{\un{e}, \un{e}'}) = \ell(
  (\un{w_1}^{\un{e}})_\bullet) - \ell(\un{w}_1)$. However  $\defect(\un{e}') \ge \ell(
  (\un{w_1}^{\un{e}})_\bullet) - \ell(\un{w}_1)$ and hence
  $\deg(c_{\un{e}, \un{e}'}) = 0$ and $\defect(\un{e}') = \ell(
  (\un{w_1}^{\un{e}})_\bullet) - \ell(\un{w}_1)$. Hence $\un{e'}$
  consists entirely of $U1$'s and $D0$'s, however this is impossible
  as $\un{w}_2$ is reduced.
\end{proof}

\subsection{Proof of Proposition \ref{gobblenew} in rank 2.}

Assume that $W$ is of rank 2, i.e. a dihedral
group. We denote the simple reflections of $W$ by $s$ and $t$ and
denote the order of $st$ (possibly $\infty$) by $m$. The
longest element of $W$ is $w_0$ (if it exists).

If $m = \infty$ then the proposition is easy: only trivalent
vertices can occur, and the proposition follows from the fact that any
two morphisms
\[
ss \dots s \to s \quad \text{($k$ factors on the left)}
\]
consisting of exactly $k$ trivalent vertices and no dots are
equal (a consequence of the Frobenius associativity relation \S\ref{2.7.2}).

From now on we will assume that $m$ is finite. If
$\un{w}_* \ne w_0$ then the result follows similarly to the case $m =
\infty$ above. So we may assume $\un{w}_* = w_0$.

Let us denote by $l_{\un{w}}$ the light leaves map $\un{w} \to
\un{w}_*$ as above. In this section (i.e. until \S \ref{proof:gen}) we
always regard $l_{\un{w}}$ as a morphism to $b_{w_0}$. That is we
always compose with some number $\ge 1$ of iterates of the
$2_{m_{st}}$-valent vertex.

We can depict $l_{\un{w}}$ schematically as follows (we depict
the case $m = 4$):
\[
l_{\un{w}} = 
\begin{array}{c} \begin{tikzpicture}[scale=0.5]
\draw (1.5,10.5) -- (5.5,10.5) -- (5.5,-0.5) -- (1.5,-0.5) --
(1.5,10.5);
\node at (3.25,5) {$\b$};

\coordinate (a0) at (5.5,0) {};
\coordinate (a1) at (5.5,1) {};
\coordinate (a2) at (5.5,2) {};
\coordinate (a3) at (5.5,3) {};
\coordinate (a4) at (5.5,4) {};
\coordinate (a5) at (5.5,5) {};
\coordinate (a6) at (5.5,6) {};
\coordinate (a7) at (5.5,7) {};
\coordinate (a8) at (5.5,8) {};
\coordinate (a9) at (5.5,9) {};
\coordinate (a10) at (5.5,10) {};

\coordinate (b0) at (6,-2) {};
\coordinate (b1) at (6.5,-2) {};
\coordinate (b2) at (7,-2) {};
\coordinate (b3) at (7.5,-2) {};
\coordinate (b4) at (8,-2) {};
\coordinate (b5) at (8.5,-2) {};
\coordinate (b6) at (9,-2) {};
\coordinate (b7) at (9.5,-2) {};
\coordinate (b8) at (10,-2) {};
\coordinate (b9) at (10.5,-2) {};
\coordinate (b10) at (11,-2) {};

\draw[red] (a0) to [out=0,in=90] (b0);
\draw[red] (a1) to [out=0,in=90] (b1);
\draw[blue] (a2) to [out=0,in=90] (b2);
\draw[blue] (a3) to [out=0,in=90] (b3);
\draw[red] (a4) to [out=0,in=90] (b4);
\draw[red] (a5) to [out=0,in=90] (b5);
\draw[blue] (a6) to [out=0,in=90] (b6);
\draw[red] (a7) to [out=0,in=90] (b7);
\draw[blue] (a8) to [out=0,in=90] (b8);
\draw[red] (a9) to [out=0,in=90] (b9);
\draw[red] (a10) to [out=0,in=90] (b10);

\draw[red] (2, -0.5) -- (2,-2);
\draw[blue] (3.5, -0.5) -- (3.5,-2);
\draw[red] (4, -0.5) -- (4,-2);
\draw[blue] (4.5, -0.5) -- (4.5,-2);

\draw[red] (2,-1) to [out=0, in=90] (3,-2);
\draw[red] (2,-1.5) to [out=0, in=90] (2.5,-2);

\draw[blue] (4.5,-1) to [out=0, in=90] (5.5,-2);
\draw[blue] (4.5,-1.5) to [out=0, in=90] (5,-2);


\draw[blue] (2,10.5) -- (2,11);
\draw[red] (3,10.5) -- (3,11);
\draw[blue] (4,10.5) -- (4,11);
\draw[red] (5,10.5) -- (5,11);
\end{tikzpicture}\end{array}
\quad \text{with} \quad
\b = 
\begin{array}{c} \begin{tikzpicture}[xscale=0.4,yscale=0.2]
\def\A{10}

\foreach \y in {0,8,15,21,27}
{
\draw[red] (0,\y) to [out=90,in=270] (3,\y+2);
\draw[blue] (1,\y) to [out=90,in=270] (2,\y+2);
\draw[red] (2,\y) to [out=90,in=270] (1,\y+2);
\draw[blue] (3,\y) to [out=90,in=270] (0,\y+2);
}

\foreach \y in {4,12,18,25}
{
\draw[blue] (0,\y) to [out=90,in=270] (3,\y+2);
\draw[red] (1,\y) to [out=90,in=270] (2,\y+2);
\draw[blue] (2,\y) to [out=90,in=270] (1,\y+2);
\draw[red] (3,\y) to [out=90,in=270] (0,\y+2);
}

\foreach \y in {2,3,10,11,17,23,24}
{
\draw[blue] (0,\y) to [out=90,in=270] (0,\y+1);
\draw[red] (1,\y) to [out=90,in=270] (1,\y+1);
\draw[blue] (2,\y) to [out=90,in=270] (2,\y+1);
\draw[red] (3,\y) to [out=90,in=270] (3,\y+1);
\draw[red] (3,\y+0.5) to (4,\y+0.5);
}

\foreach \y in {6,7,14,20}
{
\draw[red] (0,\y) to [out=90,in=270] (0,\y+1);
\draw[blue] (1,\y) to [out=90,in=270] (1,\y+1);
\draw[red] (2,\y) to [out=90,in=270] (2,\y+1);
\draw[blue] (3,\y) to [out=90,in=270] (3,\y+1);
\draw[blue] (3,\y+0.5) to (4,\y+0.5);
}

\draw[dashed] (-0.5,25) rectangle (3.5,29);
\end{tikzpicture} \end{array}
\]
(The dashed region in $\b$ denotes any $k \ge 1$ compositions of
$2m$-valent vertices which will be implicit (and not displayed) in all
diagrams until \S \ref{proof:gen}.)

\begin{lem}  \label{lem:ga}
Let $\un{w} \in Ex(S)$ with $\un{w}_* = w_0$. In $\Hom(\un{w},
b_{w_0})$ we have:
  \begin{enumerate}
  \item \begin{gather*}
  \begin{array}{c}
    \begin{tikzpicture}[scale=0.5]
      \draw (-0.5,4) -- (3.5,4) -- (7,0) -- (-0.5,0) -- (-0.5,4);
   \node at (2.5,2) {$l_{\un{w}}$};
       \foreach \x in {0} \draw[blue] (\x,4) -- (\x,4.5);
      \foreach \x in {1} \draw[red] (\x,4) -- (\x,4.5);
      \draw[purple] (3,4) -- (3,4.5);
      \node at (2,4.25) {\dots};
       \foreach \x in {0,6.5} \draw[blue] (\x,0) -- (\x,-1);
      \foreach \x in {1.5,5} \node at (\x,-0.5) {$\dots$};
     \coordinate (mid) at (3.25,-0.5);
     \draw[red] (3,-1) to [out=90, in = -120] (mid);
     \draw[red] (3.5,-1) to [out=90, in = -60] (mid);
     \draw[red] (3.25,0) to (mid);
    \end{tikzpicture}
  \end{array} = l_{\un{w}'}.
\end{gather*}
  \item 
  If $\un{w}$ is of the form $\un{w} = u_1 \dots u_n$ with $u_i
  u_{i+1} \dots u_{i+m-1} = s t s \dots $ ($m$ times). Then,  in $\Hom(\un{w},
b_{w_0})$:
\begin{gather*}
  \begin{array}{c}
    \begin{tikzpicture}[scale=0.5]
      \draw (-0.5,4) -- (3.5,4) -- (7,0) -- (-0.5,0) -- (-0.5,4);
   \node at (2.5,2) {$l_{\un{w}}$};
       \foreach \x in {0} \draw[blue] (\x,4) -- (\x,4.5);
      \foreach \x in {1} \draw[red] (\x,4) -- (\x,4.5);
      \draw[purple] (3,4) -- (3,4.5);
      \node at (2,4.25) {\dots};
       \foreach \x in {0,6.5} \draw[blue] (\x,0) -- (\x,-1);
      \foreach \x in {1.5,5} \node at (\x,-0.5) {$\dots$};
      \coordinate (c) at (3.25,-0.5);
      \def\c{red}; 
      \draw[\c] (2.5,0) to[out=-90,in=170] (c);
      \draw[\c] (3,-1) to[out=90,in=-160] (c);
      \def\c{blue};
      \draw[\c] (2.5,-1) to[out=90,in=-170] (c);
      \draw[\c] (3,0) to[out=-90,in=160] (c);
      \def\c{purple};
      \draw[\c] (4,-1) to[out=90,in=-10] (c);
      \draw[\c] (4,0) to[out=-90,in=10] (c);
      \foreach \y in{-0.15,-0.85} \node[scale=0.6] at (3.5,\y) {$\dots$};
      \def\c{blue};
    \end{tikzpicture}
  \end{array} = l_{\un{w}'}.
\end{gather*}
  \end{enumerate}
(In (1) and (2) the definition of $\un{w}'$ should be clear from the diagrams.)
\end{lem}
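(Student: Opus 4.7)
The strategy is to invoke Proposition~\ref{cor:ll}: any two light leaves morphisms for a given expression and its canonical subexpression agree in $\SD^{\nless w_0}$, so it suffices to exhibit each LHS as a valid choice of $L_{\un{w}',\un{e}'}$ composed with the $2m_{st}$-valent iterates used to define $l_{\un{w}'}$. (Here $\un{w}'$ denotes the expression at the bottom of each picture, as promised by the parenthetical remark at the end of the lemma.) In both parts the key is that the ``extra gadget'' at the bottom of the picture fits naturally into the light leaves recipe for $\un{w}'$ and $\un{e}'$.

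For part~(1), $\un{w}'$ is obtained from $\un{w}$ by inserting an extra strand of color $s$ adjacent to an existing $s$-strand $u_j$. A direct check using~\eqref{eq:edef} shows that the canonical subsequence $\un{e}'$ of $\un{w}'$ agrees with $\un{e}$ outside the insertion and has a $D0$ at the inserted position (inserting an $s$ immediately after the step that just multiplied by $s$ brings the element back down). Because the two adjacent $s$-strands are already side-by-side in $\un{w}'$, the light leaves gadget at this new $D0$ is simply a trivalent vertex---no intervening rex move is required---after which the construction continues identically to $L_{\un{w},\un{e}}$. This realises the LHS as a legitimate choice of $L_{\un{w}',\un{e}'}$ followed by the $2m_{st}$-iterates, and Proposition~\ref{cor:ll} concludes the argument.

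For part~(2), $\un{w}'$ is obtained from $\un{w}$ by applying the braid move $sts\cdots\leftrightarrow tst\cdots$ at positions $i,\dots,i+m-1$. Since the Demazure product is invariant under this move, $(\un{w}')_* = w_0$. Using the freedom of rex moves in the light leaves construction (Proposition~\ref{cor:ll}), I choose the construction of $L_{\un{w}',\un{e}'}$ that begins by applying a single $2m_{st}$-valent vertex at positions $i,\dots,i+m-1$---converting the bottom $tst\cdots$ into $sts\cdots$---and then proceeds identically to $L_{\un{w},\un{e}}$. This reproduces the LHS diagram verbatim.

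The principal technical obstacle is part~(2): the canonical subsequences $\un{e}$ and $\un{e}'$ need not impose matching decorations on the affected block (a $D0$ may appear at a different position in each), so transplanting the $\un{w}$-construction onto $\un{w}'$ demands care. The compatibility ultimately rests on the elementary observation that length-additivity is invariant under braid moves---whenever $w_{i-1}\cdot(sts\cdots)$ is reduced, so is $w_{i-1}\cdot(tst\cdots)$---so after the initial $2m_{st}$-valent vertex the same cascade of rex moves and reduced expressions may be used to complete the inductive construction. If this matching is ever imperfect, the residual discrepancy will be a morphism that factors through a pitchfork, and therefore vanishes by Lemma~\ref{lem:pitchfork} combined with Lemma~\ref{deg}.
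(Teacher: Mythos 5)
Part (1) of your argument is fine and is essentially the paper's: the inserted letter becomes a $D0$ in the canonical subexpression, its gadget is a trivalent vertex, and identifying the resulting recipe diagram with the left-hand picture uses only isotopy and Frobenius associativity (\S\ref{2.7.2}); Proposition \ref{cor:ll} then makes the identification independent of the remaining choices.

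Part (2), however, has a genuine gap. The left-hand side of (2) is $l_{\un{w}}$ preceded by a $2m_{st}$-valent vertex applied to the block of raw, unprocessed bottom letters $u_i\cdots u_{i+m-1}$ of $\un{w}'$. This is \emph{not} a choice of light leaves morphism $L_{\un{w}',\un{e}'}$: in the construction of \cite[\S 6.1]{EW2}, braid (rex) moves are only ever applied to the reduced expression carried along the top of the partially built diagram, and the one gadget that uses a $2m_{st}$-valent vertex (the $D1$ gadget, which in any case does not occur for the canonical $\un{e}'$) touches a single new bottom strand, all other legs belonging to the current reduced word; no step of the recipe ever applies a $2m_{st}$-valent vertex across several not-yet-processed strands. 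Consequently Proposition \ref{cor:ll}, which compares only morphisms produced by the recipe (differing in choices of rex moves), cannot be invoked to identify the left-hand side with $l_{\un{w}'}$, and your sentence ``I choose the construction of $L_{\un{w}',\un{e}'}$ that begins by applying a single $2m_{st}$-valent vertex at positions $i,\dots,i+m-1$'' assumes exactly what is to be proved. Indeed, if such a move were available, Proposition \ref{gobble} itself would be an immediate consequence of Proposition \ref{cor:ll}; the entire point of the dihedral section is that absorbing this vertex into $l_{\un{w}}$ (pushing it through the rex-move block $\b$ and through the $U1/D0$ gadgets) requires a genuine computation, which the paper carries out via the first Claim (a restatement of two-colour associativity \S\ref{2.7.6}) and Claim \ref{claim2m}, proved by induction on $k$, with the error terms arising from Jones--Wenzl manipulations killed because one works in $\Hom(\un{w},b_{w_0})$ (Lemma \ref{lem:pitchfork}, Lemma \ref{deg}, and the corollary on squares of $2m_{st}$-valent vertices). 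Your closing remark that any residual discrepancy ``will factor through a pitchfork'' names the right vanishing tools but supplies no mechanism for why the discrepancy has that shape; exhibiting that mechanism is precisely the induction you have omitted.
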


This lemma is clearly equivalent to Proposition \ref{gobblenew}. (The
apparently missing case of composing with a splitting trivalent vertex $\begin{array}{c}\tikz[xscale=0.3,yscale=0.3]{\draw[dashed] (-1,-1) rectangle (1,1);
\draw[color=red] (-0.5,1) to (0,0) to (0.5,1);
\draw[color=red] (0,0) to (0,-1);}\end{array}$ 
does not satisfy the conditions of Proposition \ref{gobble}.)
Also,
part (1) of the lemma follows directly from the definitions and the
Frobenius associativity relation \S\ref{2.7.2}. After examining the above picture of
$l_{\un{w}}$ it is not difficult to see that (2) is implied by the
following two claims:

\begin{claim} We have:
\[
\text{($m$ even)} \quad
\begin{array}{c}\begin{tikzpicture}[xscale=0.3, yscale=0.5]
\foreach \x in {1,4} \draw[red] (\x,0) to[out=90,in=-90] (6-\x,2) to (6-\x,4);
\foreach \x in {2} \draw[blue] (\x,0) to[out=90,in=-90] (6-\x,2) to (6-\x,4);
\draw[blue] (5,0) to[out=90,in=-90] (1,2) to[out=90,in=-45] (0.5,3) to[out=90,in=-90] (0.5,4);
\draw[blue] (0,0) to (0,2) to[out=90,in=-135] (0.5,3);
\foreach\y in {0.5,3} \node[scale=0.7] at (3,\y) {$\dots$};
\end{tikzpicture}\end{array}
=
\begin{array}{c}\begin{tikzpicture}[xscale=0.3, yscale=0.4]
 \foreach\x in{0,4} \draw[blue] (\x,0) to[out=90,in=-90] (5-\x,2.5) to (5-\x,2.5) to[out=90,in=-90] (\x,5);
 \foreach\x in{1,5} \draw[red] (\x,0) to[out=90,in=-90] (5-\x,2.5) to (5-\x,2.5) to[out=90,in=-90] (\x,5);
\draw[blue] (5,2.5) to[out=0,in=90] (6,0);
\foreach\y in {0.5,2.5,4.5} \node[scale=0.7] at (2.5,\y) {$\dots$};
\end{tikzpicture}\end{array}
\qquad
\text{($m$ odd)} \quad
\begin{array}{c}\begin{tikzpicture}[xscale=0.3, yscale=0.5]
\draw[red] (1,0) to[out=90,in=-170] (3,1);\draw[red] (5,0) to[out=90,in=-10] (3,1);
\draw[blue] (5,4) to (5,2) to[out=-90,in=10] (3,1);
\draw[red] (2,4) to (2,2) to[out=-90,in=160] (3,1);\draw[red] (4,4) to (4,2) to[out=-90,in=20] (3,1);
\draw[blue] (2,0) to[out=90,in=-160] (3,1);\draw[blue] (4,0) to[out=90,in=-20] (3,1);
\draw[blue] (3,1) to[out=170,in=-90] (1,2) to[out=90,in=-45] (0.5,3) to[out=90,in=-90] (0.5,4);
\draw[blue] (0,0) to (0,2) to[out=90,in=-135] (0.5,3);
\foreach\y in {0.5,3} \node[scale=0.7] at (3,\y) {$\dots$};
\end{tikzpicture}\end{array}
=
\begin{array}{c}\begin{tikzpicture}[xscale=0.3, yscale=0.5]
\def\y{0}; 
\draw[red] (1,\y) to[out=90,in=-170] (3,1);\draw[red] (5,\y) to[out=90,in=-10] (3,1);
\draw[blue] (5,\y+2) to[out=-90,in=10] (3,1);
\draw[red] (2,\y+2) to[out=-90,in=160] (3,1);\draw[red] (4,\y+2) to[out=-90,in=20] (3,1);
\draw[blue] (2,\y) to[out=90,in=-160] (3,1);\draw[blue] (4,\y) to[out=90,in=-20] (3,1);
\draw[blue] (3,1) to[out=170,in=-90] (1,\y+2);
\def\y{2}; 
\draw[blue] (1,\y) to[out=90,in=-170] (3,\y+1);\draw[blue] (5,\y) to[out=90,in=-10] (3,\y+1);
\draw[red] (5,\y+2) to[out=-90,in=10] (3,\y+1);
\draw[blue] (2,\y+2) to[out=-90,in=160] (3,\y+1);\draw[blue] (4,\y+2) to[out=-90,in=20] (3,\y+1);
\draw[red] (2,\y) to[out=90,in=-160] (3,\y+1);\draw[red] (4,\y) to[out=90,in=-20] (3,\y+1);
\draw[red] (3,\y+1) to[out=170,in=-90] (1,\y+2);
\draw[blue] (5,2) to[out=0,in=90] (6,0);
\foreach\y in {0.5,2,3.5} \node[scale=0.7] at (3,\y) {$\dots$};
\end{tikzpicture}\end{array}
\]
\end{claim}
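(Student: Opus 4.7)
The two claimed equations are local diagrammatic identities, valid in the Hom space $\Hom(\un{w}, b_{w_0})$ appearing in the proof of Lemma \ref{lem:ga}. The plan is to verify them by direct calculation using the Jones-Wenzl relation (\S\ref{2.7.7}), two-color associativity (\S\ref{2.7.6}), and Frobenius associativity (\S\ref{2.7.2}).

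For the $m$ even case, the LHS and RHS differ essentially by where a trivalent vertex sits relative to a $2m$-valent configuration: the LHS positions the trivalent vertex on a single blue strand at the boundary, while the RHS has a doubled $2m$-valent configuration with the trivalent vertex attached in the middle. My plan is to first apply the Jones-Wenzl relation to the doubled $2m$-valent configuration appearing in the RHS. The leading (identity-on-$m$-strands) term of this decomposition, when combined with the extra blue strand via its trivalent vertex, reduces by Frobenius associativity and a planar isotopy to the LHS configuration. The remaining correction terms from Jones-Wenzl all involve additional dots on the interior strands; after composing with the trivalent vertex on the blue strand these terms should either vanish (by the needle relation, or because they factor through expressions $\un{y}$ with $\un{y}_\bullet < w_0$, in which case they die in $\Hom(\un{w}, b_{w_0})$) or combine pairwise to zero.

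The $m$ odd case proceeds analogously. The extra subtlety is that a single $2m$-valent vertex swaps the two colors when $m$ is odd, which is why the RHS has two stacked $2m$-valent configurations rather than one: two vertices are needed to restore the original color pattern on the top and bottom boundaries. Modulo this parity-tracking, the same Jones-Wenzl-plus-Frobenius reduction as above applies.

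The main obstacle is the combinatorial complexity of the Jones-Wenzl relation as $m$ grows: the number of correction terms scales with $m$, as illustrated by the $m = 2, 3, 4$ cases in \S\ref{2.7.7}. The key simplification is that we work in $\Hom(\un{w}, b_{w_0})$, where many of the correction terms automatically die because they factor through reduced expressions for elements strictly smaller than $w_0$ in the Bruhat order. An alternative, cleaner route would be to invoke Lemma \ref{deg}: both sides of each identity lie in the minimal-degree part of $\Hom^{\not < w_0}(\un{w}, w_0)$, which is a rank-one free $R$-module, so the two sides automatically agree up to a unit in $R$. A degree-zero scalar comparison, e.g.\ by pre- or post-composing with a dot and reducing via the nil Hecke relation (\S\ref{eq:nilHecke}), then pins the unit down to be $1$.
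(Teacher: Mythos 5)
The key point you are missing is that this claim requires no derivation at all: it is, verbatim, the general form of the two-color associativity relation of \S\ref{2.7.6} (see Elias' formula (6.12) in \cite{EliasCathedral} for arbitrary $m_{st}$), i.e.\ it is one of the defining relations of $\SD$ and holds on the nose, not merely in $\Hom(\un{w},b_{w_0})$ or in a quotient. The paper's proof is exactly this one-line observation. Your main route does not get off the ground as described: the Jones-Wenzl relation of \S\ref{2.7.7} expresses a \emph{dotted} $2m_{st}$-valent vertex, and the diagrams in the claim carry no dots, so there is nothing to expand. If what you intend is the consequence ``square of a $2m_{st}$-valent vertex $=$ identity $+$ terms ending in pitchforks'', note that this consequence is itself deduced from two-color associativity together with Jones-Wenzl, so using it here is at best roundabout and at worst circular; moreover the pitchfork terms do not vanish in $\SD$ itself --- Lemma \ref{lem:pitchfork} kills them only after composition into a morphism of minimal degree in the quotient category $\SD^{\not< \un{w}_*}$, which is a weaker statement than the claim.

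Your fallback via Lemma \ref{deg} also has genuine gaps. That lemma only tells you that the degree $\ell(\un{w}_*)-\ell(\un{w})$ part of $\Hom^{\not< w_0}(\un{w}, w_0)$ has rank one, so at best the two sides of each identity are proportional \emph{in the quotient category} $\SD^{\not< w_0}$; you would still have to show that neither side is zero there (proportionality allows the scalar $0$) and that the constant is $1$. The proposed normalization by pre- or post-composing with a dot is not justified: composing with a dot changes the degree and lands in a space that is no longer rank one, and in the quotient such compositions can kill information (indeed this is exactly the mechanism of Lemma \ref{lem:vanish}), so it does not pin down the scalar. Even if these gaps were filled, you would only have the identity modulo lower terms, whereas the claim as stated --- and as the subsequent induction on $k$ in Claim \ref{claim2m} uses it --- is an exact relation in $\SD$, namely the axiom of \S\ref{2.7.6}.
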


\begin{claim} \label{claim2m}
Suppose $k \le m$ and $\un{w}$ is of the form
\[
\un{w} = sts\dots \quad \text{($k+m$-factors)}
\]
Then in $\Hom(\un{w}',
b_{w_0})$ we have:
\begin{gather*}
\text{($m$ even)} \quad
  \begin{array}{c}
    \begin{tikzpicture}[xscale=0.25,yscale=0.35]
      \def\y{5};
     \def\z{-1.5};
\def\c{-3}
 \def\w{-4.5};
\draw (-0.5,0) rectangle (6.5,\y);
\draw[purple] (0,0) -- (0,\w-1);
 \foreach\x in {0,5} \draw[red] (\x,\y) -- (\x,\y+1); 
 \foreach\x in {1,6} \draw[blue] (\x,\y) -- (\x,\y+1); 
\node at (3,\y+0.5) {\dots};
\draw[blue] (6.5,\y-1) to[out=0,in=90] (9,\z); 
\draw[red] (6.5,\y-2) to[out=0,in=90] (8,\z);
\draw[blue] (8,\w) -- (8,\w-1);\draw[red] (9,\w) -- (9,\w-1);
\node at (7,1.5) {\vdots};
\draw[blue] (2,0) to (2,\w-1);
\draw[red] (3,0) to (3,\z); \draw[blue] (3,\w) to (3,\w-1);
\draw[blue] (4,0) to (4,\z);\draw[red] (4,\w) to (4,\w-1);
\node at (1,\c) {\dots};
\node at (6,-1) {\dots}; \node at (6,-4.5) {\dots};
\foreach \x in {3,8} \draw[red] (\x,\z) to[out=-90,in=90] (12-\x,\w);
 \foreach \x in {4,9} \draw[blue] (\x,\z) to[out=-90,in=90] (12-\x,\w);
\draw [ decoration={brace,mirror,raise=0.5cm},decorate] (0,\w) --(2,\w);
\draw [ decoration={brace,mirror,raise=0.5cm},decorate] (3,\w) --(9,\w);
\node at (3,\y/2) {$\b$}; \node[below] at (1,\w-1.5) {$k$}; \node[below] at (6,\w-1.5) {$m$};
\draw[gray!20!white] (-0.5,\z/2-1) -- (9.5,\z/2-1); \node[scale=0.6] at (10,\z/2-1) {$\un{w}$};
\draw[blue!20!white] (-0.5,\w-0.5) -- (9.5,\w-0.5);\node[scale=0.6] at (10,\w-0.5) {$\un{w}'$};
    \end{tikzpicture}
  \end{array}= l_{\un{w}'}
\qquad \qquad
\text{($m$ odd)} \quad
  \begin{array}{c}
    \begin{tikzpicture}[xscale=0.25,yscale=0.35]
      \def\y{5};
     \def\z{-1.5};
\def\c{-3}
 \def\w{-4.5};
\draw (-0.5,0) rectangle (6.5,\y);
\draw[purple] (0,0) -- (0,\w-1);
 \foreach\x in {0,6} \draw[red] (\x,\y) -- (\x,\y+1); 
 \foreach\x in {1,5} \draw[blue] (\x,\y) -- (\x,\y+1); 
\node at (3,\y+0.5) {\dots};
\draw[red] (6.5,\y-1) to[out=0,in=90] (9,\z); 
\draw[blue] (6.5,\y-2) to[out=0,in=90] (8,\z);
\draw[red] (8,\w) -- (8,\w-1);\draw[blue] (9,\w) -- (9,\w-1);
\node at (7,1.5) {\vdots};
\draw[blue] (2,0) to (2,\w-1);
\draw[red] (3,0) to (3,\z); \draw[blue] (3,\w) to (3,\w-1);
\draw[blue] (4,0) to (4,\z);\draw[red] (4,\w) to (4,\w-1);
\node at (1,\c) {\dots};
\node at (6,-1) {\dots}; \node at (6,-4.5) {\dots};
\draw[red] (3,\z) to[out=-90,in=170] (6,\c);\draw[red] (9,\z) to[out=-90,in=10] (6,\c);
\draw[blue] (3,\w) to[out=90,in=-170] (6,\c); \draw[blue] (9,\w) to[out=90,in=-10] (6,\c);
\draw[blue] (4,\z) to[out=-90,in=160] (6,\c);\draw[blue] (8,\z) to[out=-90,in=20] (6,\c);
\draw[red] (4,\w) to[out=90,in=-160] (6,\c); \draw[red] (8,\w) to[out=90,in=-20] (6,\c);
\draw [ decoration={brace,mirror,raise=0.5cm},decorate] (0,\w) --(2,\w);
\draw [ decoration={brace,mirror,raise=0.5cm},decorate] (3,\w) --(9,\w);
\node at (3,\y/2) {$\b$}; \node[below] at (1,\w-1.5) {$k$}; \node[below] at (6,\w-1.5) {$m$};
\draw[gray!20!white] (-0.5,\z/2-1) -- (9.5,\z/2-1); \node[scale=0.6] at (10,\z/2-1) {$\un{w}$};
\draw[blue!20!white] (-0.5,\w-0.5) -- (9.5,\w-0.5);\node[scale=0.6] at (10,\w-0.5) {$\un{w}'$};
    \end{tikzpicture}
  \end{array} = l_{\un{w}'}.
\end{gather*}
(In both cases the purple line indicates a line that is either blue or red, depending on the parity of $k$.)
\end{claim}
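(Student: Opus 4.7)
The plan is to prove Claim \ref{claim2m} by induction on $k$, handling the two parity cases ($m$ even and $m$ odd) in parallel.

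Base case $k = 0$: Here $\un{w}$ is already a reduced expression for $w_0$, the box $\b$ collapses to the identity on strands, and the left-hand side becomes the composition of a rex move $\un{w}' \to \un{w}$ with the implicit $2m_{st}$-valent vertex into $b_{w_0}$. On the right, $l_{\un{w}'}$ is a degree-$0$ light leaf constructed from any rex move $\un{w}' \to \un{w}$ followed by the same $2m_{st}$-valent vertex. Proposition \ref{cor:ll} asserts that all such choices give the same morphism in $\SD^{\not < w_0}$, so the two sides agree.

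Inductive step: Assuming the claim for $k-1$, I would first insert the ``square'' of a $2m_{st}$-valent vertex on the $m$-portion of the mid-region of the LHS. The Corollary following Lemma \ref{lem:pitchfork} ensures that this operation does not alter the morphism, since it remains of the minimal gobbling degree $\ell(\un{w}'_*) - \ell(\un{w}')$. Using two-colour associativity \S\ref{2.7.6} together with Frobenius associativity \S\ref{2.7.2}, I would then slide one of the two new $2m_{st}$-valent vertices upward into the zigzag of $\b$, cancelling one layer of that zigzag; what remains below is precisely the LHS of Claim \ref{claim2m} for the shortened expression $\un{w}'' := u_2 \dots u_{k+m}$ with $k$ replaced by $k-1$. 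The inductive hypothesis identifies this with $l_{\un{w}''}$, and a single application of Lemma \ref{lem:ga}(1) (which follows directly from Frobenius associativity \S\ref{2.7.2} and is already established) re-attaches the leftmost strand and converts $l_{\un{w}''}$ into $l_{\un{w}'}$.

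The principal obstacle is parity bookkeeping: the structure of the zigzag $\b$, the cup-versus-central-vertex configuration in the mid-region, and which of $s, t$ is indicated by the purple line all depend on the parities of $k$ and $m$. The inductive step must therefore be verified in each case, and the delicate point is to check that the square-insertion and the subsequent upward slide do not introduce any ``pitchfork'' configurations outside those controlled by Lemma \ref{lem:pitchfork} and its Corollary. Once the colouring is tracked carefully at each move, the inductive reduction is essentially mechanical, and the two parity statements of the claim follow by the same argument with only cosmetic modifications of the pictures.
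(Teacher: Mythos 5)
Your proposal has the right outer shape (induction on $k$, two-colour associativity, the square/pitchfork corollary, Lemma \ref{lem:ga}(1)), which is also how the paper proceeds, but the core of your inductive step does not hold up. Inserting the square of a $2m_{st}$-valent vertex between the mid-level $\un{w}$ and the zigzag is legitimate (the corollary to Lemma \ref{lem:pitchfork} applies, since the part above has the minimal degree $-k$), but the next move --- ``slide one of the two new vertices upward into the zigzag, cancelling one layer'' --- is not a consequence of \S\ref{2.7.6} and \S\ref{2.7.2}. Adjacent $2m_{st}$-valent vertices do not cancel in $\SD$ (that is exactly why the square corollary requires the pitchfork lemma and only holds after composition with a minimal-degree morphism), and the braid move in the bottom layer of the zigzag acts on a different set of strands than the inserted vertex, so two-colour associativity does not simply absorb it. Worse, one of the two inserted vertices composed with $l_{\un{w}}$ is again precisely the left-hand side of the claim (it flips the last $m$ letters of $\un{w}$ back to those of $\un{w}'$), so without a genuinely new manipulation the square insertion is circular.

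Second, your reduction deletes the \emph{first} letter $u_1$ and concludes by ``re-attaching the leftmost strand'' via Lemma \ref{lem:ga}(1); but that lemma concerns precomposition with a trivalent vertex merging two adjacent strands of the \emph{same} colour, and the first two letters of $\un{w}'$ have different colours, so it cannot attach a new leftmost strand. The light-leaf structure of $l_{\un{w}}$ naturally peels off the last absorbed letter, not the first, and even if the claim for $u_2\dots u_{k+m}$ is isolated inside your diagram, the leftover composite still has to be identified with $l_{\un{w}'}$ --- a statement of the same kind as the one being proved, which the paper handles by repeatedly invoking Proposition \ref{gobblenew} for shorter words (an induction on length) to recognize boxed subdiagrams as light leaves. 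Relatedly, the paper needs $k=1$ as a separate base case (it is exactly two-colour associativity) and treats $k$ even and $k$ odd by different manipulations, the odd case requiring an auxiliary relation sliding a cup around the zigzag; your uniform $k\to k-1$ step and the dismissal of the parity analysis as cosmetic are symptoms of the missing mechanism rather than a simplification of it.
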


The first claim is just a restatement of the two-colour
associativity relation \S\ref{2.7.6}. It remains to prove Claim \ref{claim2m}.

We prove Claim \ref{claim2m} by induction on $k$. For $k = 0$ there is
nothing to prove. The case $k = 1$ is again a restatement of two-colour
associativity relation \S\ref{2.7.6}. For concreteness we assume that $m$ is even. It is not difficult to check that
the same argument works for $m$ odd.

Assume first that $k$ is even. Then we can write
\[
  \begin{array}{c}
    \begin{tikzpicture}[xscale=0.2,yscale=0.35]
      \def\y{5};
     \def\z{-1.5};
\def\c{-3}
 \def\w{-4.5};
\draw (-1.5,0) rectangle (6.5,\y); 
 \foreach\x in {-1,5} \draw[red] (\x,\y) -- (\x,\y+1); 
 \foreach\x in {0,6} \draw[blue] (\x,\y) -- (\x,\y+1); 
\node at (2.5,\y+0.5) {\dots}; 
\draw[blue] (6.5,\y-1) to[out=0,in=90] (9,\z); 
\draw[red] (6.5,\y-2) to[out=0,in=90] (8,\z);
\foreach\x in {0,2,4} \draw[blue] (\x,0) to (\x,\z); 
\foreach\x in {-1,3} \draw[red] (\x,0) to (\x,\z); 
\node at (1,-1) {\dots}; 
\node at (6,-1) {\dots}; 
\node at (7,\y/2-1) {\vdots}; 
\draw [ decoration={brace,mirror,raise=0.2cm},decorate] (-1,\z) --(2,\z);
\draw [ decoration={brace,mirror,raise=0.2cm},decorate] (3,\z) --(9,\z);
\node at (3,\y/2) {$\b$}; \node[below] at (0.5,\z-0.8) {$k$}; \node[below] at (6,\z-0.8) {$m$};
    \end{tikzpicture}
  \end{array}
= 
  \begin{array}{c}\begin{tikzpicture}[xscale=0.2,yscale=0.35]
      \def\y{3};
     \def\z{-1.5};
\def\c{-3}
 \def\w{-4.5};
\draw (-0.5,0) rectangle (6.5,\y); \node at (3,\y/2) {$\b$};
 \foreach\x in {0,5} \draw[red] (\x,\y) -- (\x,\y+2.5); 
 \foreach\x in {1,6} \draw[blue] (\x,\y) -- (\x,\y+2.5); 
\node at (3,\y+2) {\dots}; \node at (3,\y+0.7) {\dots}; 
\draw[blue] (6,\y+2) to[out=0,in=90] (8,\z); 
\draw[blue] (-0.5,\y-1) to[out=180,in=90] (-2,\z); 
\draw[red] (0,\y+1) to[out=180,in=90] (-3,\z); 
\foreach\x in {0,2} \draw[blue] (\x,0) to (\x,\z); 
\foreach\x in {1,6} \draw[red] (\x,0) to (\x,\z); 
\node at (-1,-1) {\dots};  \node at (4,-1) {\dots};  
\node at (-1,\y/2-0.5) {\vdots}; 
\node[below,white] at (3,\z-0.8) {$m$}; 
\draw[gray!40!white] (7,\z+0.8) rectangle (-3.5,\y+1.3) ;
    \end{tikzpicture} \end{array}
\]
(we use induction and Proposition \ref{gobblenew} to rewrite
the term on the right hand side enclosed in the gray box as a light
leaves morphism). Hence the term in the claim is equal to:
\begin{gather*}
  \begin{array}{c}\begin{tikzpicture}[xscale=0.2,yscale=0.35]
      \def\y{3};
     \def\z{0};
\def\c{-3}
 \def\w{-2.5};
\draw (-0.5,0) rectangle (6.5,\y); \node at (3,\y/2) {$\b$};
 \foreach\x in {0,5} \draw[red] (\x,\y) -- (\x,\y+2.5); 
 \foreach\x in {1,6} \draw[blue] (\x,\y) -- (\x,\y+2.5); 
\node at (3,\y+2) {\dots}; \node at (3,\y+0.7) {\dots}; 
\draw[blue] (6,\y+1) to[out=0,in=90] (7,\z); 
\draw[blue] (-0.5,\y-1) to[out=180,in=90] (-2,\z); 
\draw[red] (0,\y+2) to[out=180,in=90] (-3,\y+1) to[out=-90,in=90] (-3,\z); 
\foreach\x in {0,2} \draw[blue] (\x,0) to (\x,\z); 
\foreach\x in {1,6} \draw[red] (\x,0) to (\x,\z); 
\node at (-1,-2) {\dots};  \node at (4,-2) {\dots}; \node at (4,-0.5) {\dots};  
\foreach \x in {1,6} \draw[red] (\x,\z) to[out=-90,in=90] (8-\x,\w); 
\foreach \x in {2,7} \draw[blue] (\x,\z) to[out=-90,in=90] (8-\x,\w);
\foreach \x in {0,-2} \draw[blue] (\x,\z) to (\x,\w);\foreach \x in {-3} \draw[red] (\x,\z) to (\x,\w);
\draw[gray!40!white] (8.5,\w+0.8) rectangle (-2.5,\y+1.3) ;
    \end{tikzpicture} \end{array}
= 
 \begin{array}{c}\begin{tikzpicture}[xscale=0.2,yscale=0.35]
      \def\y{4};
     \def\z{0};
\def\d{-1}
\def\c{-3}
 \def\w{-2.5};
\draw (-2.5,\d) -- (6.5,\d) -- (6.5,\y) -- (-0.5,\y) --(-2.5,\d);\node at (3,\y/2-0.5) {$l_{\un{w}'}$};
 \foreach\x in {0,5} \draw[red] (\x,\y) -- (\x,\y+1.5); 
 \foreach\x in {1,6} \draw[blue] (\x,\y) -- (\x,\y+1.5); 
\node at (3,\y+1) {\dots}; 
\draw[red] (0,\y+1) to[out=180,in=90] (-3,\y) to[out=-90,in=90] (-3,\d); 
\coordinate (c) at (0.5,-1.8);
\draw[blue] (0,\w) to[out=90,in=-135] (c); \draw[blue] (1,\w) to[out=90,in=-45] (c); \draw[blue] (c) to (0.5,\d);
\foreach\x in {-3,2,6} \draw[red] (\x,\w) to (\x,\d); 
\foreach\x in {-2,5} \draw[blue] (\x,\w) to (\x,\d); 
\node at (-1,-2) {\dots};  \node at (4,-2) {\dots}; 
\draw[gray!40!white] (8.5,\d-0.5) rectangle (-3.5,\y+1.3) ;
    \end{tikzpicture} \end{array}
=
 \begin{array}{c}\begin{tikzpicture}[xscale=0.2,yscale=0.35]
      \def\y{4};
     \def\z{0};
\def\d{-1}
\def\c{-3}
 \def\w{-2.5};
\draw (-3.5,\d) -- (6.5,\d) -- (6.5,\y) -- (-0.5,\y) --(-3.5,\d);\node at (3,\y/2-0.5) {$l_{\un{w}''}$};
 \foreach\x in {0,5} \draw[red] (\x,\y) -- (\x,\y+1.5); 
 \foreach\x in {1,6} \draw[blue] (\x,\y) -- (\x,\y+1.5); 
\node at (3,\y+1) {\dots}; 
\coordinate (c) at (0.5,-1.8);
\draw[blue] (0,\w) to[out=90,in=-135] (c); \draw[blue] (1,\w) to[out=90,in=-45] (c); \draw[blue] (c) to (0.5,\d);
\foreach\x in {-3,2,6} \draw[red] (\x,\w) to (\x,\d); 
\foreach\x in {-2,5} \draw[blue] (\x,\w) to (\x,\d); 
\node at (-1,-2) {\dots};  \node at (4,-2) {\dots}; 
    \end{tikzpicture} \end{array}
\end{gather*}
(at each step we apply induction to the term in the gray box). Now we
are done by part (1) of Lemma \ref{lem:ga}.

Now assume that $k$ is odd. Then we have
\[
  \begin{array}{c}
    \begin{tikzpicture}[xscale=0.2,yscale=0.35]
      \def\y{5};
     \def\z{-1.5};
\def\c{-3}
 \def\w{-4.5};
\draw (-1.5,0) rectangle (6.5,\y); 
 \foreach\x in {-1,5} \draw[red] (\x,\y) -- (\x,\y+1); 
 \foreach\x in {0,6} \draw[blue] (\x,\y) -- (\x,\y+1); 
\node at (2.5,\y+0.5) {\dots}; 
\draw[blue] (6.5,\y-1) to[out=0,in=90] (9,\z); 
\draw[red] (6.5,\y-2) to[out=0,in=90] (8,\z);
\foreach\x in {-1,2,4} \draw[blue] (\x,0) to (\x,\z); 
\foreach\x in {0,3} \draw[red] (\x,0) to (\x,\z); 
\node at (1,-1) {\dots}; 
\node at (6,-1) {\dots}; 
\node at (7,\y/2-1) {\vdots}; 
\draw [ decoration={brace,mirror,raise=0.2cm},decorate] (-1,\z) --(2,\z);
\draw [ decoration={brace,mirror,raise=0.2cm},decorate] (3,\z) --(9,\z);
\node at (3,\y/2) {$\b$}; \node[below] at (0.5,\z-0.8) {$k$}; \node[below] at (6,\z-0.8) {$m$};
    \end{tikzpicture}
  \end{array}
= 
  \begin{array}{c}\begin{tikzpicture}[xscale=0.2,yscale=0.35]
      \def\y{3};
\def\s{1.5};
     \def\z{-1.5};
\def\c{-3}
 \def\w{-4.5};
\draw (-0.5,0) rectangle (6.5,\y); \node at (3,\y/2) {$\b$};

\foreach \x in {1,6} \draw[blue] (\x,\y) -- (\x,\y+0.1) to[out=90,in=-90] (6-\x,\y+2.5)
to[out=90,in=-90] (\x,\y+4.9) -- (\x,\y+\s+3);
\foreach \x in {0,5} \draw[red] (\x,\y) -- (\x,\y+0.1) to[out=90,in=-90] (6-\x,\y+2.5)
to[out=90,in=-90] (\x,\y+4.9) -- (\x,\y+\s+3);

\foreach \q in {0.5,2.5,4.5} \node at (3,\y+\q) {\dots}; 
\draw[blue] (6,\y+4.7) to[out=0,in=90] (8,\z); 
\draw[red] (-0.5,\y-1) to[out=180,in=90] (-2,\z); 
\draw[blue] (0,\y+\s+1) to[out=180,in=90] (-3,\z); 
\foreach\x in {0,2} \draw[blue] (\x,0) to (\x,\z); 
\foreach\x in {1,6} \draw[red] (\x,0) to (\x,\z); 
\node at (-1,-1) {\dots};  \node at (4,-1) {\dots};  
\node at (-1,\y/2-0.5) {\vdots}; 
\node[below,white] at (3,\z-0.8) {$m$}; 
\draw[gray!40!white] (7,\z+0.8) rectangle (-3.5,\y+\s+2.8) ;
    \end{tikzpicture} \end{array}
=
  \begin{array}{c}\begin{tikzpicture}[xscale=0.2,yscale=0.35]
      \def\y{3};
\def\s{1.5};
     \def\z{-1.5};
\def\c{-3}
 \def\w{-4.5};
\draw (-0.5,0) rectangle (6.5,\y); \node at (3,\y/2) {$\b$};

\foreach \x in {1,6} \draw[blue] (\x,\y) -- (\x,\y+0.1) to[out=90,in=-90] (6-\x,\y+2.5)
to[out=90,in=-90] (\x,\y+4.9) -- (\x,\y+\s+3);
\foreach \x in {0,5} \draw[red] (\x,\y) -- (\x,\y+0.1) to[out=90,in=-90] (6-\x,\y+2.5)
to[out=90,in=-90] (\x,\y+4.9) -- (\x,\y+\s+3);

\foreach \q in {0.5,2.5,4.5} \node at (3,\y+\q) {\dots}; 
\draw[blue] (6,\y+0.4) to[out=0,in=90] (8,\z); 
\draw[red] (-0.5,\y-1) to[out=180,in=90] (-2,\z); 
\draw[blue] (0,\y+\s+1) to[out=180,in=90] (-3,\z); 
\foreach\x in {0,2} \draw[blue] (\x,0) to (\x,\z); 
\foreach\x in {1,6} \draw[red] (\x,0) to (\x,\z); 
\node at (-1,-1) {\dots};  \node at (4,-1) {\dots};  
\node at (-1,\y/2-0.5) {\vdots}; 
\node[below,white] at (3,\z-0.8) {$m$}; 
    \end{tikzpicture} \end{array}
\]
(again for the first step we apply induction and Proposition
\ref{gobblenew} to simplify the term in the gray box).
For the second equality we have used the relation
\[
\begin{array}{c}\begin{tikzpicture}[xscale=-0.2, yscale=0.3]
 \foreach\x in{0,4} \draw[blue] (\x,-0.5) to (\x,0) to[out=90,in=-90] (5-\x,2.5) to (5-\x,2.5) to[out=90,in=-90] (\x,5) to  (\x,5.5);
 \foreach\x in{1,5} \draw[red] (\x,-0.5) to  (\x,0) to[out=90,in=-90] (5-\x,2.5) to (5-\x,2.5) to[out=90,in=-90] (\x,5)  to  (\x,5.5);
\draw[blue] (5,2.5) to[out=0,in=90] (6,-0.5);
\draw[blue] (0,5) to[out=180,in=90] (-1,-0.5);
\foreach\y in {0.5,2.5,4.5} \node[scale=0.7] at (2.5,\y) {$\dots$};
\end{tikzpicture}\end{array}
=
\begin{array}{c}\begin{tikzpicture}[xscale=-0.2, yscale=0.3]
 \foreach\x in{0,4} \draw[blue] (\x,-0.5) to (\x,0) to[out=90,in=-90] (5-\x,2.5) to (5-\x,2.5) to[out=90,in=-90] (\x,5) to  (\x,5.5);
 \foreach\x in{1,5} \draw[red] (\x,-0.5) to  (\x,0) to[out=90,in=-90] (5-\x,2.5) to (5-\x,2.5) to[out=90,in=-90] (\x,5)  to  (\x,5.5);
\draw[blue] (5,2.5) to[out=0,in=90] (6,-0.5);
\draw[blue] (0,0) to[out=180,in=90] (-1,-0.5);
\foreach\y in {0.5,2.5,4.5} \node[scale=0.7] at (2.5,\y) {$\dots$};
\end{tikzpicture}\end{array}
\]
which follows by induction (or can be checked direction from two
colour associativity). Now the argument proceeds as in the
case of $k$ even.

\subsection{Proof of Proposition \ref{gobblenew} in general.} \label{proof:gen}

Now take $W$ any Coxeter system. As in the dihedral case it is enough
to establish the following claim:

\begin{lem}  \label{lem:ga}
Let $\un{w} \in Ex(S)$ with $\un{w}_* = x$. In
  $\SD^{\not < x}$ we have:
  \begin{enumerate}
  \item \begin{gather*}
  \begin{array}{c}
    \begin{tikzpicture}[scale=0.5]
      \draw (-0.5,4) -- (3.5,4) -- (7,0) -- (-0.5,0) -- (-0.5,4);
   \node at (2.5,2) {$l_{\un{w}}$};
       \foreach \x in {0} \draw[brown] (\x,4) -- (\x,4.5);
      \foreach \x in {1} \draw[green] (\x,4) -- (\x,4.5);
      \draw[purple] (3,4) -- (3,4.5);
      \node at (2,4.25) {\dots};
       \foreach \x in {0,6.5} \draw[blue] (\x,0) -- (\x,-1);
      \foreach \x in {1.5,5} \node at (\x,-0.5) {$\dots$};
     \coordinate (mid) at (3.25,-0.5);
     \draw[red] (3,-1) to [out=90, in = -120] (mid);
     \draw[red] (3.5,-1) to [out=90, in = -60] (mid);
     \draw[red] (3.25,0) to (mid);
    \end{tikzpicture}
  \end{array} = l_{\un{w}'}.
\end{gather*}
  \item 
  If $\un{w}$ is of the form $\un{w} = u_1 \dots u_n$ with $u_i
  u_{i+1} \dots u_{i+m-1} = s t s \dots $ ($m$ times). Then
\begin{gather*}
  \begin{array}{c}
    \begin{tikzpicture}[scale=0.5]
      \draw (-0.5,4) -- (3.5,4) -- (7,0) -- (-0.5,0) -- (-0.5,4);
   \node at (2.5,2) {$l_{\un{w}}$};
       \foreach \x in {0} \draw[brown] (\x,4) -- (\x,4.5);
      \foreach \x in {1} \draw[green] (\x,4) -- (\x,4.5);
      \draw[purple] (3,4) -- (3,4.5);
      \node at (2,4.25) {\dots};
       \foreach \x in {0,6.5} \draw[blue] (\x,0) -- (\x,-1);
      \foreach \x in {1.5,5} \node at (\x,-0.5) {$\dots$};
      \coordinate (c) at (3.25,-0.5);
      \def\c{red}; 
      \draw[\c] (2.5,0) to[out=-90,in=170] (c);
      \draw[\c] (3,-1) to[out=90,in=-160] (c);
      \def\c{blue};
      \draw[\c] (2.5,-1) to[out=90,in=-170] (c);
      \draw[\c] (3,0) to[out=-90,in=160] (c);
      \def\c{purple};
      \draw[\c] (4,-1) to[out=90,in=-10] (c);
      \draw[\c] (4,0) to[out=-90,in=10] (c);
      \foreach \y in{-0.15,-0.85} \node[scale=0.6] at (3.5,\y) {$\dots$};
      \def\c{blue};
    \end{tikzpicture}
  \end{array} = l_{\un{w}'}.
\end{gather*}
  \end{enumerate}
(In (1) and (2) the definition of $\un{w}'$ should be clear from the diagrams.)
\end{lem}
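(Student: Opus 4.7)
The strategy is to reduce the general case to the dihedral case just proved by exploiting the \emph{locality} of both identities. The trivalent vertex in part~(1) and the $2m_{st}$-valent vertex in part~(2) involve only strands coloured by the rank-$2$ parabolic subgroup generated by the simple reflections $s, t$. All other colours present in the diagram appear as vertical strands passing alongside the region where the manipulation takes place and play no active role; they can be transported unchanged through every step of the dihedral argument.

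For part~(1), I would argue essentially as in the dihedral case. From the recursive construction of a light leaves morphism, the $D0$ attached to the leftmost repeated letter in $\un{w}$ produces, after a rex move inside $l_{\un{w}}$, a trivalent vertex of the correct colour at the bottom of the diagram. Pre-composing with the extra trivalent vertex of the statement merges two adjacent trivalent vertices, which by Frobenius associativity \S\ref{2.7.2} coincides with the corresponding piece of $l_{\un{w}'}$. No strand of a colour different from that of the vertex is involved.

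For part~(2), I would rerun Claim~\ref{claim2m} and its induction on $k$, but now working in $\SD^{\not < x}$ instead of mapping into $b_{w_0}$. The base cases $k=0$ and $k=1$ are immediate from the definitions and two-colour associativity \S\ref{2.7.6}. In the inductive step, the key moves of the dihedral proof — invoking Lemma~\ref{lem:pitchfork}, replacing inner boxed regions by light leaves morphisms, and the two ``picture identities'' depending on the parity of $k$ — are all supported on $\{s,t\}$-coloured strands. The remaining colours sit inside the box labelled~$\b$ (or pass vertically to its left) and are not disturbed. Thus the proof in the dihedral case transcribes essentially verbatim, with the coloured background translated intact through every manipulation.

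The main obstacle is the bookkeeping required to keep the induction non-circular: the inductive rewritings of ``inner'' subdiagrams as light leaves use Proposition~\ref{gobblenew} itself. To handle this I would organise the argument as an outer induction on $\ell(\un{w})$, noting that every recursive invocation of Proposition~\ref{gobblenew} is applied to an expression strictly shorter than $\un{w}$ (the letters absorbed into the vertex at the bottom reduce the length), and that Proposition~\ref{cor:ll} guarantees the images in $\SD^{\not < y}$ of the relevant light leaves are well defined. Once this is verified, the passage from rank~$2$ to arbitrary rank is essentially a matter of colouring the inert background strands and verifying that no relation used in the dihedral proof breaks when those extra strands are inserted.
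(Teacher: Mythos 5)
Your overall strategy for part (2) --- reduce to the dihedral computation of Claim~\ref{claim2m} --- is indeed the paper's strategy, but the step you treat as automatic is precisely where the content lies. You assert that all colours other than $s,t$ ``appear as vertical strands passing alongside the region where the manipulation takes place and play no active role''. For an arbitrary presentation of $l_{\un{w}}$ this is simply not the case: the rex moves occurring in the light-leaves construction at step $i$ and afterwards are rex moves between reduced expressions of $(\un{w}_{\le i-1}^{\un{e}_{\le i-1}})_{\bullet}$, which in general involve every colour, so strands coloured by other simple reflections can be braided (via $2m_{su}$-, $2m_{tu}$-valent vertices, Zamolodchikov moves, etc.) directly against the $m$ strands $u_i,\dots,u_{i+m-1}$ to which the new $2m_{st}$-valent vertex is attached. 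Claim~\ref{claim2m} is a statement about purely two-coloured diagrams; it cannot be ``rerun with a coloured background'' until one has arranged that a neighbourhood of the new vertex is two-coloured, and that arrangement is exactly what needs proof.

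The paper supplies it by a normalization your proposal omits: write $(\un{w}_{\le i-1}^{\un{e}_{\le i-1}})_{\bullet} = x'v$ with $x'$ minimal in the coset $x'\langle s,t\rangle$ and $v \in \langle s,t\rangle$; by Proposition~\ref{cor:ll} (independence of the image of $l_{\un{w}}$ from the choices made) one may assume the reduced expression $\un{x}$ occurring inside $l_{\un{w}}$ has the form $\un{x}'\,\un{v}$, and by the construction of light leaves one may assume the block meeting the letters $u_i\dots u_{i+m-1}$ consists of a dihedral rex move $\b$ applied to $\un{v}\,(sts\dots)$, with all strands of other colours passing to its left. Only after this coset-adapted choice does the statement literally become a rank-2 computation, to which the previous section applies. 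So your outline is right, but the ``locality'' you invoke is the conclusion of this representative-choice argument, not a hypothesis you may assume; without it the transcription of Claim~\ref{claim2m} does not get off the ground. (Your treatment of part (1), and your remark that the recursive appeals to Proposition~\ref{gobblenew} concern strictly shorter expressions and hence cause no circularity, are fine and agree with the paper.)
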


As in the dihedral case (1) is immediate from the definitions. It
remains to prove (2). We depict $l_{\un{w}}$ schematically as
follows:
\[
\begin{array}{c} \begin{tikzpicture}[scale=0.4]
\draw (-0.5,0) rectangle (3.5,2); \node at (1.5,1) {$LL_1$}; 
\draw (-0.5,3) rectangle (7.5,5); \node at (3.5,4) {$X$}; 
\draw (-0.5,6) rectangle (10.5,8); \node at (5,7) {$LL_2$}; 
\foreach \x in {0,0.5,1,3} \draw(\x,-0.5) -- (\x,0); 
\node at (2,-0.25) {\dots};
\draw[red] (7,-0.5) -- (7,3);\draw[blue] (6.5,-0.5) --(6.5,3);\draw[red] (6,-0.5) -- (6,3);\draw[purple] (4,-0.5) -- (4,3); 
\node at (5,1.5) {\dots};
\foreach \x in {8,8.5,9.5,10} \draw(\x,6) -- (\x,-0.5); 
\node at (9,3) {\dots};
\foreach \x in {0.5,1.5,3} \draw (\x,2) -- (\x,3); 
\node at (2.25,2.5) {\dots};
\foreach \x in {0.5,2,3.5,7} \draw (\x,5) -- (\x,6); 
\node at (5.25,5.5) {\dots};
\draw [ decoration={brace,mirror,raise=0.1cm},decorate] (0,-0.5) to (3,-0.5);
\draw [ decoration={brace,mirror,raise=0.1cm},decorate] (8,-0.5) --(10,-0.5);
\node at (1.5,-1.5) {$\un{w}_1$};
\node at (9,-1.5) {$\un{w}_2$};
\draw [ decoration={brace,raise=0.1cm},decorate] (0.5,3) to (3,3);
\node at (1.75,4) {$\un{x}$};
\foreach \x in {1,2.5,4,9} \draw(\x,8) -- (\x,8.5); 
\node at (6.5,8.25) {\dots};
\node at (5,8.75) {$\un{w}_*$};
\end{tikzpicture} \end{array}
\]
(Here $\un{w}_1 = u_1 \dots u_{i-1}$, $\un{w}_2 = u_{i+m} \dots
u_n$ and $\un{x}$ is a reduced expression for $(\un{w}_{\le i-1}^{\un{e}_{\le i-1}})_{\bullet}$.) Let us write $(\un{w}_{\le i-1}^{\un{e}_{\le i-1}})_{\bullet}$ as $x' v$ where $x'$ is
  minimal in $x' \langle s, t \rangle$ and $v \in \langle s, t
  \rangle$. By Corollary \ref{cor:ll} we can assume that $\un{x}$ is
  of the form $\un{x'} \hspace{2pt}\un{v}$ where $\un{x}'$ (resp. $\un{v}$)  is a
  reduced expression for $x'$ (resp. $v$).

Moreover, by the construction of light leaves we can assume that $X$ has
the form:
\begin{gather*}
X = 
  \begin{array}{c}
    \begin{tikzpicture}[xscale=0.4,yscale=0.5]
      \def\h{4}
      \def\w{2}
      \foreach \x in {0,1,3} \draw(\x,-0.5) -- (\x,\h+0.5); 
   \foreach \y in {0.25,\h-0.25} \node[scale=0.7] at (2,\y) {\dots};
      \draw (3.5,0) rectangle (4+\w,\h); \node at (3.75+\w/2,\h/2) {$\b$}; 
      \draw[blue] (4+\w-0.5,\h) -- (4+\w-0.5,\h+0.5); 
      \draw[red] (4+\w-1,\h) -- (4+\w-1,\h+0.5);
      \draw[purple] (4+\w-2,\h) -- (4+\w-2,\h+0.5);
      \node[scale=0.7]  at (4+\w-1.5,\h+0.25) {\dots};
      \draw[blue] (4+\w,3) to[out=0,in=90] (4+\w+3,-0.5); 
      \draw[red] (4+\w,2.25) to[out=0,in=90] (4+\w+2,-0.5);
      \draw[blue] (4+\w,1.5) to[out=0,in=90] (4+\w+1,-0.5);
      \node[scale=0.7] at (5,-0.25) {\dots};
      \node[scale=0.7] at (4+\w+0.25,1) {\vdots};
      \draw[purple] (4,0) -- (4,-0.5); 
\draw [ decoration={brace,mirror,raise=0.1cm},decorate] (0,-0.5) to
(3,-0.5); 
\node at (1.5,-1.3) {$\un{x}'$};
\draw [ decoration={brace,mirror,raise=0.1cm},decorate] (4,-0.5) to
(5.5,-0.5); 
\node at (4.75,-1.3) {$\un{v}$};
\draw [ decoration={brace,mirror,raise=0.1cm},decorate] (6,-0.5) to
(9,-0.5); 
\node at (7.5,-1.3) {$sts\dots$};
    \end{tikzpicture}
  \end{array}
\end{gather*}
Now the claim reduces to a rank 2 calculation, which we have covered in the
previous section.

\section{Properties of gobbling morphisms}

The following important property of gobbling morphisms explains
their name:

\begin{prop} \label{dotgobble}
Let $\un{w} = u_1 \dots u_n$. Fix $1 \le i \le n$ and set $\un{w}' :=
u_1 \dots u_{i-1} u_{i+1} \dots u_n$.
  We have
\[
\begin{array}{c}  \begin{tikzpicture}[xscale=0.4,yscale=0.9]
\def\yb{-0.7}
\def\yt{2.3}
\draw (-3.5,0) -- (3.5,0) -- (2.5,2) -- (-2.5,2) -- (-3.5,0);
\node at (0,1) {$G_{\un{w}}$}; 
\draw[brown] (-3,0) -- (-3,\yb); 
\draw[purple] (-1,0) -- (-1,\yb); \draw[green] (1,0) -- (1,\yb);
\draw[purple] (3,0) -- (3,\yb);
\foreach \x in {2,-2} \node at (\x,\yb/2) {\dots}; 
\draw[red] (0,0) -- (0,-0.5); 
 \node[circle,fill,draw,inner
      sep=0mm,minimum size=1mm,color=red] at (0,-0.5) {};
\draw[green] (2,2) -- (2,\yt); 
\draw[purple] (-2,2) -- (-2,\yt);
\draw[green] (-1,2) -- (-1,\yt);
\draw[brown] (1,2) -- (1,\yt);
\node at (0,2.2) {\dots};
\def\s{0.6}
\node[scale=\s] at (-1,0.2) {$u_{i-1}$};
\node[scale=\s] at (0,0.2) {$u_{i}$};
\node[scale=\s] at (1,0.2) {$u_{i+1}$};
\end{tikzpicture} \end{array} 
=
\begin{cases}
\begin{array}{c}  \begin{tikzpicture}[xscale=0.4,yscale=0.9]
\def\yb{-0.7}
\def\yt{2.3}
\draw (-3.5,0) -- (3.5,0) -- (2.5,2) -- (-2.5,2) -- (-3.5,0);
\node at (0,1) {$G_{\un{w}'}$}; 
\draw[brown] (-3,0) -- (-3,\yb); 
\draw[purple] (-0.5,0) -- (-0.5,\yb); \draw[green] (0.5,0) -- (0.5,\yb);
\draw[purple] (3,0) -- (3,\yb);
\foreach \x in {1.75,-1.75} \node at (\x,\yb/2) {\dots}; 
\draw[green] (2,2) -- (2,\yt); 
\draw[purple] (-2,2) -- (-2,\yt);
\draw[green] (-1,2) -- (-1,\yt);
\draw[brown] (1,2) -- (1,\yt);
\node at (0,2.2) {\dots};
\def\s{0.6}
\node[scale=\s] at (-0.5,0.2) {$u_{i-1}$};
\node[scale=\s] at (0.5,0.2) {$u_{i+1}$};
\end{tikzpicture} \end{array} 
& \text{if $\un{w}_* = \un{w}_*'$,} \\
0 & \text{otherwise.}
\end{cases}
\]
\end{prop}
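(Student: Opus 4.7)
The plan is to split into two cases. Since $u_{i+1} * \cdots * u_n \leq u_i * u_{i+1} * \cdots * u_n$ by the monotonicity of $*$ from \S\ref{demprod}, multiplying on the left by $u_1 * \cdots * u_{i-1}$ gives $\un{w}'_* \leq \un{w}_*$ always, so the two cases to consider are $\un{w}_* = \un{w}'_*$ and $\un{w}'_* < \un{w}_*$. In the latter case I would argue that the hom space $\Hom^{\nless \un{w}_*}(\un{w}', \un{w}_*)$ itself vanishes, forcing the left-hand side to be zero: by \cite[Theorem 1.1]{EW2} this hom space has a basis of light leaves $\overline{LL}_{\un{w}', \un{e}'}$ indexed by $01$-sequences $\un{e}'$ of $\un{w}'$ satisfying $(\un{w}'^{\un{e}'})_\bullet = \un{w}_*$, but any such subexpression would satisfy $(\un{w}'^{\un{e}'})_\bullet \leq \un{w}'_* < \un{w}_*$, a contradiction, so no basis elements exist.

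Assume now $\un{w}_* = \un{w}'_*$. A direct degree count gives $\deg(\mathrm{LHS}) = \deg(G_{\un{w}}) + 1 = \ell(\un{w}_*) - \ell(\un{w}) + 1 = \ell(\un{w}'_*) - \ell(\un{w}')$, the same degree as $G_{\un{w}'}$. Applying Lemma \ref{deg} to $\un{w}'$, the degree $\ell(\un{w}'_*) - \ell(\un{w}')$ component of $\Hom^{\nless \un{w}'_*}(\un{w}', \un{w}'_*)$ is one-dimensional over $\Bbbk$ and spanned by $G_{\un{w}'}$. Hence $\mathrm{LHS} = c \cdot G_{\un{w}'}$ for some scalar $c \in \Bbbk$, and the remaining task is to show $c = 1$.

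To prove $c = 1$, my plan is to exhibit a local rewriting of the composition that puts it in the form of Proposition \ref{gobblenew}, which by uniqueness must then equal $G_{\un{w}'}$. Using Proposition \ref{cor:ll} to choose a convenient light-leaves representative of $G_{\un{w}}$: when the defining $01$-sequence \eqref{eq:edef} has $e_i = 0$, the $u_i$-strand enters a trivalent vertex merging with a same-coloured neighbour, and the Frobenius unit relation \S\ref{2.7.1} immediately collapses the dot against this trivalent vertex, producing a diagram built only from $2m_{st}$-valent and trivalent vertices with exactly $\ell(\un{w}') - \ell(\un{w}'_*)$ trivalent vertices; Proposition \ref{gobblenew} then identifies it with $G_{\un{w}'}$. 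In the sub-case $e_i = 1$ (which is consistent with $\un{w}_* = \un{w}'_*$ only when some later step changes from $U1$ to $D0$ upon removal of $u_i$), I would push the dot through the intervening $2m_{st}$-valent vertices via the Jones-Wenzl relation \S\ref{2.7.7}: the leading term introduces a trivalent vertex that absorbs the dot, and the remaining correction terms carry additional dots which should vanish in $\SD^{\nless \un{w}_*}$ via Lemma \ref{lem:vanish}.

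The main obstacle will be the bookkeeping in the $e_i = 1$ sub-case: one must verify carefully that each correction term from the Jones-Wenzl expansion acquires a $D0$ or $D1$ decoration in the right place so that Lemma \ref{lem:vanish} genuinely applies, and that the leading term has exactly the form required by Proposition \ref{gobblenew}. An attractive alternative, which I would attempt first, is to reduce $e_i = 1$ to $e_i = 0$ by exploiting the freedom in Proposition \ref{cor:ll} together with the gobbling identities of Lemma \ref{lem:ga} to rearrange the light-leaves representative so that the dot lands directly at the compensating $D0$-step, bypassing the Jones-Wenzl computation entirely.
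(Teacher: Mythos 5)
Your case analysis, the vanishing argument when $\un{w}'_* < \un{w}_*$ (via the light-leaves basis of $\Hom^{\nless \un{w}_*}(\un{w}',\un{w}_*)$), the rank-one reduction to a scalar $c$, and the $D0$ sub-case (dot runs into a trivalent vertex, Frobenius unit \S\ref{2.7.1}, then Proposition \ref{gobblenew}) are all sound and agree in spirit with the paper. The gap is in the $U1$ sub-case, which is where the real content lies. When the dot meets a $2m_{st}$-valent vertex, the relevant expansion (the paper isolates it as Lemma \ref{dotpitchforks}, a consequence of the Jones--Wenzl relation \S\ref{2.7.7}) does \emph{not} have a leading term in which ``a trivalent vertex absorbs the dot'': the leading term deletes the $2m_{st}$-valent vertex and re-emits a dot on a \emph{different} strand (of the other colour when $m_{st}$ is odd), so after one application you face the same problem again inside a smaller diagram. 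The argument must therefore be an induction on the number of $2m_{st}$-valent vertices in $G_{\un{w}}$, and at each step one has to check that the through-strands of the leading term have the same Demazure product on top and bottom, so that the inductive identification with a gobbling morphism via Proposition \ref{gobblenew} is legitimate; this is exactly how the paper closes the argument. Moreover the correction terms are not of the shape handled by Lemma \ref{lem:vanish}: they end in pitchforks, and are killed by Lemma \ref{lem:pitchfork}, after noting that the part of $G_{\un{w}}$ lying above the expansion is again a morphism of minimal degree $\ell(\un{w}_*)-\ell(\un{v})$ from the intermediate expression $\un{v}$, with $\un{v}_*=\un{w}_*$. Without the correct form of the leading term and this induction, your rewriting does not terminate in a diagram to which Proposition \ref{gobblenew} applies.

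Your preferred fallback is a dead end: the freedom in Proposition \ref{cor:ll} is only in the choice of rex moves inside the light-leaf construction, while the decoration ($U1$ versus $D0$) at position $i$ is determined by $\un{w}$ itself through \eqref{eq:edef}, so no choice of representative of $G_{\un{w}}$ makes the dot ``land at the compensating $D0$ step''; the gobbling identities concern composition with trivalent and $2m_{st}$-valent vertices, not dots, and do not help here. Finally, the degree/rank-one step producing the scalar $c$ is correct but superfluous: once the composite is rewritten, modulo morphisms killed in $\SD^{\nless \un{w}_*}$, as a diagram built only from $2m_{st}$-valent vertices and $\ell(\un{w}')-\ell(\un{w}'_*)$ trivalent vertices, Proposition \ref{gobblenew} already yields equality with $G_{\un{w}'}$ outright.
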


In the proof we will need the following lemma:

\begin{lem} \label{dotpitchforks}
The following relations hold. In each case $P$ is a
  linear combination of diagrams, all of which end in pitchforks:
\begin{gather*}
\text{($m_{st}$ odd)} \quad
\begin{array}{c}\begin{tikzpicture}[xscale=0.2,yscale=0.6]
\coordinate (c) at (3,1);
\draw[blue] (0,0) to[out=90,in=-175] (c); \draw[red] (1,0) to[out=90,in=-170] (c);\draw[red] (5,0) to[out=90,in=-10] (c); \draw[blue] (6,0) to[out=90,in=-5] (c);
\draw[red] (0,2) to[out=-90,in=175] (c); \draw[blue] (1,2) to[out=-90,in=170] (c);\draw[blue] (5,2) to[out=-90,in=10] (c); \draw[red] (6,2) to[out=-90,in=5] (c);
\foreach \y in {0.5,1.5} \node at (3,\y) {$\dots$};
 \node[circle,fill,draw,inner
      sep=0mm,minimum size=1mm,color=blue] at (0,0) {};
\foreach \x in {1,5} \draw[red] (\x,0) to (\x,-0.3);\draw[blue] (6,0) to (6,-0.3);
\end{tikzpicture}\end{array}
=
\begin{array}{c}\begin{tikzpicture}[xscale=0.2,yscale=0.6]
\draw[red] (1,-0.5) to (0,2); \draw[blue] (5,-0.5) to (4,2);
\draw[blue] (2,-0.5) to (1,2);
\node at (3,1) {$\dots$};
\draw[red] (5,2) -- (5,1.5);
 \node[circle,fill,draw,inner
      sep=0mm,minimum size=1mm,color=red] at (5,1.5) {};
\end{tikzpicture}\end{array} + P
\qquad
\text{($m_{st}$ even)} \quad
\begin{array}{c}\begin{tikzpicture}[xscale=0.2,yscale=0.6]
\coordinate (c) at (3,1);
\draw[blue] (0,0) to[out=90,in=-175] (c); \draw[red] (1,0) to[out=90,in=-170] (c);\draw[blue] (5,0) to[out=90,in=-10] (c); \draw[red] (6,0) to[out=90,in=-5] (c);
\draw[red] (0,2) to[out=-90,in=175] (c); \draw[blue] (1,2) to[out=-90,in=170] (c);\draw[red] (5,2) to[out=-90,in=10] (c); \draw[blue] (6,2) to[out=-90,in=5] (c);
\foreach \y in {0.5,1.5} \node at (3,\y) {$\dots$};
 \node[circle,fill,draw,inner
      sep=0mm,minimum size=1mm,color=blue] at (0,0) {};
\foreach \x in {1,6} \draw[red] (\x,0) to (\x,-0.3);\draw[blue] (5,0) to (5,-0.3);
\end{tikzpicture}\end{array}
=
\begin{array}{c}\begin{tikzpicture}[xscale=0.2,yscale=0.6]
\draw[red] (1,-0.5) to (0,2); \draw[red] (5,-0.5) to (4,2);
\draw[blue] (2,-0.5) to (1,2);
\node at (3,1) {$\dots$};
\draw[blue] (5,2) -- (5,1.5);
 \node[circle,fill,draw,inner
      sep=0mm,minimum size=1mm,color=blue] at (5,1.5) {};
\end{tikzpicture}\end{array} + P \\
\text{($m_{st}$ odd)} \quad
\begin{array}{c}\begin{tikzpicture}[xscale=0.2,yscale=0.6]
\coordinate (c) at (3,1);
\draw[blue] (0,0) to[out=90,in=-175] (c); \draw[blue] (6,0) to[out=90,in=-5] (c);
\draw[brown] (2,0) to[out=90,in=-150] (c);\draw[brown] (4,0) to[out=90,in=-40] (c);
\draw[red] (0,2) to[out=-90,in=175] (c); 
\draw[red] (6,2) to[out=-90,in=5] (c);
\draw[brown] (3,1) -- (3,2);
\foreach \y in {0.5,1.5} {
\foreach \x in {1.2,4.8}
\node[scale=0.5] at (\x,\y) {$\dots$};}
\draw[gray] (3,1) -- (3,0);
 \node[circle,fill,draw,inner
      sep=0mm,minimum size=1mm,color=gray] at (3,0) {};
\foreach \x in {2,4} \draw[brown] (\x,0) to (\x,-0.3);\foreach \x in {0,6} \draw[blue] (\x,0) to (\x,-0.3);
\end{tikzpicture}\end{array}
=
\begin{array}{c}\begin{tikzpicture}[xscale=0.2,yscale=0.6]
\coordinate (c) at (3,1);
\draw[brown] (2,0) to[out=90,in=-150] (c);\draw[brown] (4,0) to[out=90,in=-40] (c);
\draw[brown] (3,1) -- (3,2);
\foreach \y in {0.5} {
\foreach \x in {1.2,4.8}
\node[scale=0.5] at (\x,\y) {$\dots$};}
\draw[blue] (0,0) to[out=90,in=-90] (1,2);\draw[blue] (6,0) to[out=90,in=-90] (5,2);
\foreach \x in {2,4} \draw[brown] (\x,0) to (\x,-0.3);\foreach \x in {0,6} \draw[blue] (\x,0) to (\x,-0.3);
\foreach \x in {0,6} {
\draw[red] (\x,2) -- (\x,1.5);
 \node[circle,fill,draw,inner
      sep=0mm,minimum size=1mm,color=red] at (\x,1.5) {}; }
\end{tikzpicture}\end{array} + P
\qquad
\text{($m_{st}$ even)} \quad
\begin{array}{c}\begin{tikzpicture}[xscale=0.2,yscale=0.6]
\coordinate (c) at (3,1);
\draw[blue] (0,0) to[out=90,in=-175] (c); \draw[red] (6,0) to[out=90,in=-5] (c);
\draw[brown] (2,0) to[out=90,in=-150] (c);\draw[brown] (4,0) to[out=90,in=-40] (c);
\draw[red] (0,2) to[out=-90,in=175] (c);
\draw[blue] (6,2) to[out=-90,in=5] (c);
\draw[brown] (3,1) -- (3,2);
\foreach \y in {0.5,1.5} {
\foreach \x in {1.2,4.8}
\node[scale=0.5] at (\x,\y) {$\dots$};}
\draw[gray] (3,1) -- (3,0);
 \node[circle,fill,draw,inner
      sep=0mm,minimum size=1mm,color=gray] at (3,0) {};
\foreach \x in {2,4} \draw[brown] (\x,0) to (\x,-0.3);\foreach \x in {0} \draw[blue] (\x,0) to (\x,-0.3);\foreach \x in {6} \draw[red] (\x,0) to (\x,-0.3);
\end{tikzpicture}\end{array}
=
\begin{array}{c}\begin{tikzpicture}[xscale=0.2,yscale=0.6]
\coordinate (c) at (3,1);
\draw[brown] (2,0) to[out=90,in=-150] (c);\draw[brown] (4,0) to[out=90,in=-40] (c);
\draw[brown] (3,1) -- (3,2);
\foreach \y in {0.5} {
\foreach \x in {1.2,4.8}
\node[scale=0.5] at (\x,\y) {$\dots$};}
\draw[blue] (0,0) to[out=90,in=-90] (1,2);\draw[red] (6,0) to[out=90,in=-90] (5,2);
\foreach \x in {2,4} \draw[brown] (\x,0) to (\x,-0.3);\foreach \x in {0} \draw[blue] (\x,0) to (\x,-0.3);\foreach \x in {6} \draw[red] (\x,0) to (\x,-0.3);
\foreach \x/\c in {0} {
\draw[red] (\x,2) -- (\x,1.5);
 \node[circle,fill,draw,inner
      sep=0mm,minimum size=1mm,color=red] at (\x,1.5) {}; }
\foreach \x/\c in {6} {
\draw[blue] (\x,2) -- (\x,1.5);
 \node[circle,fill,draw,inner
      sep=0mm,minimum size=1mm,color=blue] at (\x,1.5) {}; }
\end{tikzpicture}\end{array}+ P
\end{gather*}
(In the second pair of relations the brown and gray lines can be
chosen to be either red and blue, or blue and red.)
\end{lem}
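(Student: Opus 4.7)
The plan is to apply the Jones-Wenzl relation of \S\ref{2.7.7} to the central $2m_{st}$-valent vertex on each LHS, and then observe that, of the resulting summands, exactly one escapes producing a pitchfork; all remaining terms can be absorbed into $P$.

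For the first pair of relations the LHS is exactly a dotted $2m_{st}$-valent vertex in the sense of \S\ref{2.7.7}, so Jones-Wenzl rewrites it as a linear combination of diagrams built only from dots and trivalent vertices. I would classify each summand by which of its trivalent vertices land adjacent to the arched boundary strands appearing on the LHS. Whenever a trivalent vertex is adjacent to a bent outer strand, Frobenius associativity \S\ref{2.7.2} identifies that local configuration as a pitchfork, so the whole summand lies in the ideal generated by pitchforks. Exactly one summand of Jones-Wenzl escapes this fate: the one in which the dotted input strand is tied off to the opposite leg by dots, while the remaining $2m_{st}-2$ legs form an identity pattern. Composing with the arched boundary, this surviving term is precisely the RHS.

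For the second pair of relations the dot sits on a central/middle leg of the $2m_{st}$-valent vertex rather than on a side leg. The argument is of the same shape: the variant of the Jones-Wenzl relation in which the dot is placed on the middle leg (obtained from the displayed forms in \S\ref{2.7.7} via the symmetries of the $2m_{st}$-valent vertex and the Frobenius relations) again writes the LHS as a sum of diagrams with only dots and trivalent vertices. The same local check near the boundary shows that every summand except the ``straight-through'' one ends in a pitchfork, and that surviving summand agrees with the RHS up to isotopy.

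The main obstacle will be the bookkeeping. The Jones-Wenzl expansion grows in size with $m_{st}$; its coefficients depend sensitively on the realization (compare for instance the two distinct relations listed for $m_{st}=4$ in \S\ref{2.7.7}), and the precise pattern of ``pitchforks near the boundary'' must be verified in each of the four cases (odd/even $m_{st}$ and the two dot positions). Fortunately the coefficients are irrelevant for the statement we want, since any pitchfork-ending term goes into $P$; so the whole proof reduces to a local topological check that identifies, in each case, the unique Jones-Wenzl summand whose trivalent vertices do not abut the boundary.
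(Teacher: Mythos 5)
Your overall strategy is the same as the paper's: the paper proves this lemma in one line by invoking Elias' Jones--Wenzl relation, with all non-surviving terms absorbed into the pitchfork ideal, exactly as you propose, and your local analysis of which summands end in pitchforks (including the rotated/middle-leg variant needed for the second pair of relations) is in line with what the paper leaves implicit.

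However, there is one genuine lapse, and it is precisely the point the paper singles out as ``the only tricky point'': you assert that ``the coefficients are irrelevant for the statement we want, since any pitchfork-ending term goes into $P$.'' That is true for the pitchfork terms, but not for the unique surviving summand. The lemma claims that the surviving term appears with coefficient exactly $1$ (the right-hand side carries no scalar), and this cannot be waved away: the Jones--Wenzl coefficients depend on the realization (compare the two different $m_{st}=4$ expansions in \S\ref{2.7.7}), so a priori the ``identity-like'' term could occur with some other scalar, in which case the lemma as stated --- and its use in the proof of Proposition \ref{dotgobble}, where the induction needs the dotted gobbling morphism to equal $G_{\un{w}'}$ on the nose rather than a multiple of it --- would fail. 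So your proof needs one more ingredient: an argument (or a direct verification, as the paper suggests for $m_{st}=2,3,4$ from the displayed relations, together with the normalization of the Jones--Wenzl projector whose identity term has coefficient $1$ in general) that the unique summand not ending in a pitchfork occurs with coefficient $1$ in every case, including the middle-leg variants obtained by rotation. With that added, your argument coincides with the paper's.
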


\begin{proof}
  This is a consequence of Elias' Jones-Wenzl relation. (The only
  tricky point is to check that the coefficient of 1 in the right
  hand term is correct. The reader can verify directly that this is the case
  for $m_{st} = 2, 3, 4$ using the relations we have given.)
\end{proof}

\begin{proof}[Proof of Proposition \ref{dotgobble}]
  If $\un{w}'_* \ne \un{w}_*$ then $\un{w}'_* < \un{w}_*$ because
  $\un{w}'$ is a subexpression of $\un{w}$ and the result is zero
  because we are working in   $\SD^{\not < \un{w}_*}$.

So now we assume that $\un{w}_*' = \un{w}_*$. Let us induct on the
number $N$ of $2m_{st}$-valent vertices which occur in
$G_{\un{w}}$. If $N =0$ then $G_{\un{w}}$ consists only of trivalent
vertices and the result is clear by the Frobenius unit relation \S\ref{2.7.1}.

Now we examine what happens when we apply the relations to 
simplify the diagram on the left hand side of the proposition. If the
dot first meets a trivalent vertex then the result is clear from the
Frobenius unit relation \S\ref{2.7.1} (the source and target have the same star product
because $s*s = s$). If the dot meets a $2m$-valent vertex then we can
apply Lemma \ref{dotpitchforks} to expand it (and remove the $2m$-valent vertex
in question). By Lemma \ref{lem:pitchfork} we can ignore all the
terms labelled $P$ in the lemma. Now we are done by induction (note
that in each of the relations in Lemma \ref{dotpitchforks} the
expression corresponding to through strands on the bottom
and top boundary of the pictured right hand term have the same
$*$-product).
\end{proof}

\section{Morphisms without D1's}

\subsection{The nil Hecke ring}

Denote by $Q$ the field of fractions of $R$.

Consider the smash product $Q_W := Q * W$. That is, $Q * W$ is a free left
$Q$-module with basis $\{\delta_w \; | \; w \in W \}$ and multiplication given by
\[
(f \delta_x )(g \delta_y) = f(xg) \delta_{xy}.
\]
Inside $Q_W$ we consider the elements
\[
D_s := \frac{1}{\alpha_s} (\delta_{\id} - \delta_s) = (\delta_{\id} +
\delta_s) \frac{1}{\alpha_s}.
\]
The elements $D_s$ satisfy the relations:
\begin{align}
D_s^2 &= 0; \label{eq:D^2} \\
D_sD_t \dots &= D_tD_s \dots \quad \text{($m_{st}$-factors on both
  sides)} \label{eq:braid}\\
D_sf &= (sf)D_s + \partial_s(f) \quad \text{for all $f \in Q$.}\label{eq:pol}
\end{align}
If $y \in W$ and $\un{y} = st\dots u$ is a reduced
expression then, by \eqref{eq:braid},  we obtain well-defined elements
\[
D_y := D_sD_t \dots D_u  \in Q_W.
\]
The \emph{nil Hecke ring} $NH$ is defined to be the $R$-subring of $Q * W$
generated by $\{ D_w \; | \; w \in W\}$. (This is not the definition
of Kostant-Kumar \cite{KK}, but
agrees with it for the realizations they consider.) As a left
$R$-module $NH$ is free with basis $\{ D_w \; | \; w \in W \}$.

\subsection{} Given a word $\un{w} = (s_1, \dots, s_m)$ and a subexpression $\un{e}$ we have a
corresponding light leaves map $LL_{\un{e}}: \un{w} \to (\un{w}^{\un{e}})_\bullet$ in
  $\SD^{\not < \un{w}^{\un{e}}}$. Now it is clear from construction
  that if the decoration of $\un{e}$ has no D1's then $LL_{\un{e}}$
  has the form
\[
 \begin{tikzpicture}[xscale=0.4,yscale=0.9]
\def\yb{-0.5}
\def\yt{2.3}
\draw (-4.2,0) -- (4.2,0) -- (2.5,2) -- (-2.5,2) -- (-4.2,0);
\node at (0,1) {$G_{\un{w}'}$}; 
\draw[brown] (-3,\yb) -- (-3,-0.2); 
 \node[circle,fill,draw,inner
      sep=0mm,minimum size=1mm,color=brown] at (-3,-0.2) {};
\draw[blue] (-3.5,0) -- (-3.5,\yb); 
\draw[purple] (-0.6,\yb) -- (-0.6,-0.2);
 \node[circle,fill,draw,inner
      sep=0mm,minimum size=1mm,color=purple] at (-0.6,-0.2) {};
\draw[green] (1,0) -- (1,\yb);
\draw[blue] (-1.2,0) -- (-1.2,\yb);
\draw[purple] (3,0) -- (3,\yb);
\draw[red] (3.5,\yb) -- (3.5,-0.2);
 \node[circle,fill,draw,inner
      sep=0mm,minimum size=1mm,color=red] at (3.5,-0.2) {};
\foreach \x in {2,-2} \node at (\x,\yb/2) {\dots}; 
\draw[red] (0.2,\yb) to (0.2, -0.2);
 \node[circle,fill,draw,inner
      sep=0mm,minimum size=1mm,color=red] at (0.2,-0.2) {};
\draw[green] (2,2) -- (2,\yt); 
\draw[purple] (-2,2) -- (-2,\yt);
\draw[green] (-1,2) -- (-1,\yt);
\draw[brown] (1,2) -- (1,\yt);
\node at (0,2.2) {\dots};
\def\s{0.6}

\draw [ decoration={brace,mirror,raise=0.1cm},decorate] (-3.6,-0.6) to (3.6,-0.6);
\node at (0,-1) {$\un{w}$};
\end{tikzpicture} 
\]
where $\un{w}'$ is the subword of $\un{w}$ consisting of all $s_i$
such that $d_ie_i \ne U0$ and $G_{\un{w}'}$ denotes the gobbling
morphism associated to $\un{w}'$. In particular, the
light leaves morphism $LL_{\un{e}}$ is canonical in this case.

Consider two subexpressions $\un{e^1}$ and $\un{e^2}$ of $\un{w}$
such that $(\un{w}^{\un{e^1}})_\bullet = (\un{w}^{\un{e^2}})_\bullet$. Define an
element of the nil Hecke ring as the product
\[
f(\un{e^1}, \un{e^2}) = f_1 f_2 \dots f_m
\]
where
\[
f_i = \begin{cases} \alpha_{s_i} & \text{if $e_i^1 = e_i^2 =
    U0$,}\\
    1 & \text{if exactly one of }e_i^1 \text{ and } e_i^2 \text{ is } U0, \\
D_{s_i} &  \text{otherwise}. \end{cases}
\]
Finally, define $d(\un{e^1}, \un{e^2})$ as the coefficient of
$D_{(\un{w}^{\un{e^1}})_\bullet }$ in $f(\un{e^1}, \un{e^2})$. Hence
$d(\un{e^1}, \un{e^2}) \in R$.

\begin{thm}\label{thm:main} If $(\un{w}^{\un{e^1}})_\bullet =
  (\un{w}^{\un{e^2}})_\bullet$ and the decorations of $\un{e^1}$ and $\un{e^2}$ have no $D1$, then 
  \[
\langle LL_{\un{e^1}}, LL_{\un{e^2}} \rangle = d(\un{e^1}, \un{e^2}).
\]
\end{thm}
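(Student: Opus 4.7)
The plan is to prove the theorem by induction on $m = \ell(\un{w})$; the base case $m = 0$ is trivial. The starting point is to unfold the pairing
\[
\langle LL_{\un{e^1}}, LL_{\un{e^2}} \rangle = LL_{\un{e^1}} \circ \overline{LL_{\un{e^2}}} : x \to x
\]
using the explicit no-$D1$ form displayed just above the theorem: reading from bottom to top we have $\overline{G_{\un{w'^{(2)}}}}$, then dots emerging at the $U0$-positions of $\un{e^2}$ producing $\un{w}$ in the middle, then dots ending strands at the $U0$-positions of $\un{e^1}$, and finally $G_{\un{w'^{(1)}}}$ on top. At each position $i$ one of four local pictures appears according to $(e^1_i, e^2_i)$: an isolated barbell $\alpha_{s_i}$ (both $U0$), a dot on a strand of $\overline{G_{\un{w'^{(2)}}}}$ (only $\un{e^1}$ is $U0$), a dot on a strand of $G_{\un{w'^{(1)}}}$ (only $\un{e^2}$ is $U0$), or a full through-strand of both gobbling morphisms (neither $U0$). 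These four alternatives are in bijection with the three possible values $\alpha_{s_i}, 1, D_{s_i}$ of the factor $f_i$ in the nil-Hecke product.

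For the inductive step I would examine position $1$, where necessarily $d^j_1 = U$ (because $w^{(j)}_0 = e$), so $e^j_1 \in \{U0, U1\}$. When $e^1_1 = e^2_1 = U0$, position $1$ contributes a floating barbell $\alpha_{s_1}$, the remainder is literally the pairing for $\un{w}_{\ge 2}$, and the inductive hypothesis together with the fact that $f_1 = \alpha_{s_1}$ acts as a left scalar in $NH$ closes the case. When exactly one of $e^1_1, e^2_1$ equals $U0$ (so $f_1 = 1$), Proposition \ref{dotgobble} applied at position $1$ either annihilates the pairing or deletes the first letter of the relevant $\un{w'^{(j)}}$; diagrammatically this mirrors the algebraic dichotomy $D_{s_1} D_v = 0$ versus $D_{s_1} D_v = D_{s_1 v}$, according to whether $s_1 v < v$ or not. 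When neither is $U0$ (so $f_1 = D_{s_1}$), the first strands of the two gobbling morphisms meet at position $1$ and the nil-Hecke relation \S\ref{eq:nilHecke} is used to diagrammatically realize the NH commutation $D_{s_1} f = (s_1 f) D_{s_1} + \partial_{s_1}(f)$ needed to move polynomials produced later past the position-$1$ strand.

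The main obstacle is that after peeling position $1$ in the last three cases, the two resulting subexpressions of $\un{w}_{\ge 2}$ generally no longer share an endpoint (for example, in the ``$e^1_1 = U0$, $e^2_1 = U1$'' case one has $(\un{w}_{\ge 2}^{\un{e^2}_{\ge 2}})_\bullet = s_1 x$ while $(\un{w}_{\ge 2}^{\un{e^1}_{\ge 2}})_\bullet = x$), and the induced decorations can also flip (a $D0$ in $\un{w}$ may become $U0$ in $\un{w}_{\ge 2}$ once the preceding walk is truncated). To make the induction go through cleanly I would prove a mild generalization in which the two no-$D1$ subexpressions are allowed to end at distinct elements $x^1, x^2$, interpreting the pairing as a morphism $x^2 \to x^1$ whose distinguished component on an appropriate basis is the coefficient of $D_{w}$ for a suitable $w \in W$ in $f_1 \cdots f_m$. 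Under this generalization the position-$1$ dot-gobbling analysis corresponds term-by-term to left-multiplying the NH product by $\alpha_{s_1}$, $1$, or $D_{s_1}$, and the ``neither $U0$'' step --- the heart of the argument --- comes down to checking that the polynomial corrections produced by the nil-Hecke relation in $\SD$ match exactly the $\partial_{s_1}(\cdot)$ contributions arising in the NH expansion.
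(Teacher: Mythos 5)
Your opening move coincides with the paper's: unfolding $LL_{\un{e^1}}\circ\overline{LL_{\un{e^2}}}$ into two gobbling morphisms sandwiching a middle layer whose local pieces (barbell, single dot, through-strand) match the factors $\alpha_{s_i},1,D_{s_i}$, and invoking Proposition \ref{dotgobble} to eat dots. But from there the paper does \emph{not} induct on positions: it expands the whole middle layer at once, bookkeeping the nil Hecke relation in an auxiliary algebra $M$ with $A_sf=s(f)A_s+\partial_s(f)B_s$ (see \eqref{eq:nhab}--\eqref{eq:abexpansion}), evaluates each resulting diagram via Proposition \ref{dotgobble} together with the degree argument that gobbling composites along a reduced subword give $\id_x$ while non-reduced ones die in $\End^{\nless x}(x)=R$, and only then maps to $NH$ by $A_s\mapsto D_s$, $B_s\mapsto 1$ to recognize the answer as the $D_x$-coefficient of $f(\un{e^1},\un{e^2})$. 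Your left-to-right peeling has a genuine gap precisely where you defer to a ``mild generalization''. First, truncating a leading $U1$ changes not only the endpoints but the decorations of the tail (only $D\to U$ flips can occur, but they do occur), and these flips change the local factors themselves: a position that was $(D0,D0)$ or $(D0,U0)$ can become $(U0,U0)$ or $(U0,U1)$, so the nil Hecke word $f(\un{e^1}_{\ge 2},\un{e^2}_{\ge 2})$ attached to the truncated data is \emph{not} $f_2\cdots f_m$. Your inductive hypothesis, even in the generalized form, therefore computes the coefficient of some $D_w$ in the wrong product, and no mechanism is offered to compare the two.

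Second, the generalization itself is not well-posed as stated: for $x^1\ne x^2$ a morphism $x^2\to x^1$ in the relevant quotient has no canonical ``distinguished component'' (the light-leaves basis of that Hom-space involves choices and may have several elements in the relevant degree), and you do not say which $w$ or which basis is meant, nor why the extracted coefficient would be independent of these choices. Third, in the crucial ``neither $U0$'' case the matching of $\partial_{s_1}$-corrections is not a local check in a left-to-right scheme: the polynomials that must cross the position-$1$ strand are produced only after the entire diagram to its right has been evaluated, so making your step precise forces you to track not a single scalar but a full $R$-linear combination of diagrams with broken/unbroken strands --- which is exactly the global bookkeeping the paper performs with $M$ and the homomorphism to $NH$, together with the canonicity statement of Proposition \ref{gobble} and the positivity of degrees in $\End^{\nless x}(x)=R$ that converts ``reduced subword'' into ``coefficient of $D_x$''. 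Without formulating and proving that strengthened inductive statement, the proposal remains a plan rather than a proof.
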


\begin{remark}
Given a subexpression $\un{e}$ of $\un{w}$ let $J := \{ 1 \le i \le m
\; | \; e_i = 1 \}$ and write $J = \{ i_1, \dots, i_r \}$ with $i_1 <
i_2 < \dots < i_r$. Then $\un{e}$ has no $D1$'s if and only if
$(s_{i_1}, s_{i_2}, \dots, s_{i_r})$ is reduced. That is, $\un{e}$ has
no $D1$s if the expression obtained from $\un{w}^{\un{e}} =
(s_1^{e_1}, \dots, s_m^{e_m})$ by
  deleting all occurrences of $\id$ is reduced.
\end{remark}

\begin{proof}
Set $x=(\un{w}^{\un{e^1}})_\bullet=(\un{w}^{\un{e^2}})_\bullet$. Let $\un{w'}^1$ be the expression obtained from $\un{w}$ by deleting all the places $i$ with $d_i e_i^1=U0$ and $\un{w'}^2$ be the expression obtained from $\un{w}$ by deleting all the places $i$ with $d_i e_i^2=U0$. Then $x=(\un{w'}^1)_*=(\un{w'}^2)_*$. Therefore:
\[
LL_{\un{w}, \un{e^1}} \circ \overline{LL}_{\un{w}, \un{e^2}}=
G_{\un{w'}^1} \circ K \circ \overline{G}_{\un{w'}^2}= 
  \begin{array}{c}
    \begin{tikzpicture}[scale=0.5]
      \draw (0,1) -- (2,3) -- (4,3) -- (6,1) -- (0,1);
      \draw (0,-1) -- (2,-3) -- (4,-3) -- (6,-1) -- (0,-1);
      \node at (3,2) {$G_{\un{w'}^1}$};\node at (3,-2) {$\overline{G}_{\un{w'}^2}$};
      \draw (0,1) -- (6,1) -- (6,-1) -- (0,-1) -- (0,1);
      \node at (6.7,1) {$\un{w'}^1$};\node at (6.7,-1) {$\un{w'}^2$};\node
      at (7,0) {$\un{w}$};
\draw[gray] (0,0) -- (2,0);
\draw[gray] (6,0) -- (4,0);
\node at (3,0) {$K$};
    \end{tikzpicture}
  \end{array}
\]The graph $K$ consists of vertical
edges, vertical dotted edges and boxes. For example, 
\begin{gather*}
K =
\begin{array}{c}
  \begin{tikzpicture}[scale=0.8]
\def\l{0.2}
    \foreach \x in {1,5,8,9} \draw (\x,0.5) -- (\x,-0.5); 
    \foreach \x in {2,3,10} 
       { \draw (\x,0.5) -- (\x,-\l);
         \draw[fill] (\x,-\l) circle (2pt); }  
    \foreach \x in {7,12} 
       { \draw (\x,-0.5) -- (\x,\l);
         \draw[fill] (\x,\l) circle (2pt); }  
 \def\l2{0.3}
    \foreach \x in {4,6,11} 
       { 
\draw (\x-\l2,-1*\l2) rectangle (\x + \l2,\l2);
\node at (\x,0) {\scalebox{0.8}{$\alpha_{s_{\x}}$}}; }
  \end{tikzpicture}
\end{array}
\end{gather*}
The graph $K$ is determined by $\un{e^1}$ and $\un{e^2}$ as follows. 
At the $i$th place, we
associate a box labelled by $\a_{s_i}$ if $d_i e_i^1=d_i e_i^2=U0$, we
associate a vertical dotted edge with label $s_i$ if exactly one of
$d_i e_i^1$ and $d_i e_i^2$ is $U0$ and associate a vertical edge 
colored by $s_i$ if neither $d_i e_i^1$ nor $d_i e_i^2$ is $U0$ (here
$* \in \{ U1, D0 \}$):
\begin{gather*}
  \begin{array}{c}
    \begin{tikzpicture}
\draw[dashed] (-0.5,-0.5) rectangle (0.5,0.5);
\def\l{0.3}
\draw (-\l,-\l) rectangle (\l,\l);
\node at (0,0) {\scalebox{0.8}{$\alpha_{s_i}$}};
\node at (0,1) {$U0$};\node at (0,-1) {$U0$};
    \end{tikzpicture}
  \end{array} \quad 
  \begin{array}{c}
    \begin{tikzpicture}
\draw[dashed] (-0.5,-0.5) rectangle (0.5,0.5);
\def\l{0.2}
\draw (0,-0.5) to (0,\l);
\draw[fill] (0,\l) circle (2pt);
\node at (0,1) {$U0$};\node at (0,-1) {$*$};
    \end{tikzpicture}
  \end{array} \quad 
  \begin{array}{c}
    \begin{tikzpicture}
\def\l{0.2}
\draw[dashed] (-0.5,-0.5) rectangle (0.5,0.5);
\draw (0,0.5) to (0,-\l);
\draw[fill] (0,-\l) circle (2pt);
\node at (0,1) {$*$};\node at (0,-1) {$U0$};
    \end{tikzpicture}
  \end{array} \quad 
  \begin{array}{c}
    \begin{tikzpicture}
\draw[dashed] (-0.5,-0.5) rectangle (0.5,0.5);
\draw (0,-0.5) to (0,0.5);
\node at (0,1) {$*$};\node at (0,-1) {$*$};
    \end{tikzpicture}
  \end{array}
\end{gather*}

In the category $\SD$, by using the nil Hecke relation
we may write $K$ as an $R$-linear combination of graphs consisting of
vertical edges, dotted edges and broken vertical edges. (See
\eqref{eq:nilHecke} for an example of this process and a broken edge.)
More precisely, 
consider the following sets:
\begin{gather*}
  \un{\e} := \{ i \; | \; (d_ie_i^1, d_ie_i^2) \ne (U0, U0) \},\\
\un{\e}_\phi := \{ i \; | \; \text{exactly one of $d_ie_i^1,
  d_ie_i^2$ is $U0$} \}.
\end{gather*}
Thus $\un{\e}$ indexes  those positions in $K$ which do not
correspond to boxes, and $\un{\e}_\phi$ corresponds to those edges in $K$
which carry one dot.

For any $\un{\g}$ with $\un{\e}_\phi \subset \un{\g} \subset \un{\e}$, let $K_{\un{\g}}$
be the graph obtained from $K$ by removing all boxes and breaking any edge corresponding to
$i \in \un{\g} - \un{\e}_\phi$. For example, with $K$ as
above:
\begin{gather*}
K_{\un{\e}_\phi} =
\begin{array}{c}
  \begin{tikzpicture}[scale=0.8]
\def\l{-0.2} \def\b{0.2}
    \foreach \x in {1,4,6,7}
     {\draw (\x,0.5) -- (\x,-0.5);}     
    \foreach \x in {2,3,8} 
       { \draw (\x,0.5) -- (\x,-\l);
         \draw[fill] (\x,-\l) circle (2pt); }  
    \foreach \x in {5,9} 
       { \draw (\x,-0.5) -- (\x,\l);
         \draw[fill] (\x,\l) circle (2pt); }  
 \def\l2{0.3}
  \end{tikzpicture}
\end{array} \\
K_{\un{\e}} =
\begin{array}{c}
  \begin{tikzpicture}[scale=0.8]
\def\l{-0.2} \def\b{0.2}
    \foreach \x in {1,4,6,7}
     {\draw (\x,0.5) -- (\x,\b); \draw (\x,-\b) -- (\x,-0.5);
     \draw[fill] (\x,\b) circle (2pt); \draw[fill] (\x,-\b) circle (2pt);}
    \foreach \x in {2,3,8} 
       { \draw (\x,0.5) -- (\x,-\l);
         \draw[fill] (\x,-\l) circle (2pt); }  
    \foreach \x in {5,9} 
       { \draw (\x,-0.5) -- (\x,\l);
         \draw[fill] (\x,\l) circle (2pt); }  
 \def\l2{0.3}
  \end{tikzpicture}
\end{array}
\end{gather*}

On the other hand, let $M$ be the $R$-algebra with generators $A_s$
and $B_s$ for $s \in S$ and ``nil Hecke'' relation
\begin{equation}
A_s f=s(f) A_s+\partial_{s}(f)
B_s\label{eq:nhab}
\end{equation}
for $s \in S$ and $f \in R$. Define $F=F_1 \cdots F_m \in M$, where \[
F_i = \begin{cases} \alpha_{s_i} & \text{if $e_i^1 = e_i^2 =
    U0$,}\\
    B_{s_i} & \text{if exactly one of }e_i^1 \text{ and } e_i^2 \text{ is } U0, \\
A_{s_i} &  \text{otherwise}. \end{cases}
\] 
For any $\un{\g}$ with $\un{\e}_\phi \subset \un{\g} \subset \un{\e}$, we define $F_{\un{\g}}=F_{1, \un{\g}} \cdots F_{m, \un{\g}}$, where 
\[
F_{i,\un{\g}} = \begin{cases} B_{s_i} & \text{if } i \in \un{\g}, \\
    A_{s_i} & \text{if } i \in \un{\e}-\un{\g}, \\
1 &  \text{otherwise}. \end{cases}
\]
Using \eqref{eq:nhab} we may write
\begin{equation}
F=\sum_{\un{\e}_\phi \subset \un{\g} \subset \un{\e}}a_{\un{\g}} F_{\un{\g}} 
\quad \text{with $a_{\un{\g}} \in R$}.
\label{eq:abexpansion}
\end{equation}

Because the nil Hecke relation holds in $\SD$, we have $$G_{\un{w'}^1}
\circ K \circ \overline{G}_{\un{w'}^2}=\sum_{\un{\e}_\phi \subset
  \un{\g} \subset \un{\e}} a_{\un{\g}} G_{\un{w'}^1} \circ K_{\un{\g}}
\circ \overline{G}_{\un{w'}^2}.$$

Let $K_{\un{\g}}'$ be the subgraph of $K_{\un{\g}}$ consisting of only
unbroken vertical edges. If $\un{w}'_{\un{\g}}$ denotes the subexpression of
$\un{w}$ corresponding to those $i \in \un{\e} - \un{\g}$, then
$K_{\un{\g}}'$ is the identity on $B_{\un{w}'_\g}$.
By Proposition \ref{dotgobble}, 
\[
G_{\un{w'}^1} \circ K_{\un{\g}} \circ \overline{G}_{\un{w'}^2}=\begin{cases}
G_{\un{w'}_{\un{\g}}} \circ K_{\un{\g}}' \circ \overline{G}_{\un{w'}_{\un{\g}}} & \text{if } (\un{w'}_{\un{\g}})_*=x, \\
0 & \text{otherwise}.
\end{cases}
\]
On the other hand we claim:
\[
G_{\un{w'}_{\un{\g}}} \circ K_{\un{\g}}' \circ
\overline{G}_{\un{w'}_{\un{\g}}} = \begin{cases}
\id_x & \text{if $\un{w}_{\un{\g}}'$ is reduced,} \\
0 & \text{otherwise.} \end{cases}
\]
Indeed, if $\un{w}_{\un{\g}}'$ is reduced then
$G_{\un{w'}_{\un{\g}}}$, $K_{\un{\g}}'$ and
$\overline{G}_{\un{w'}_{\un{\g}}}$ are all canonical isomorphisms
between different representatives for $x$, and if $\un{w}_{\un{\g}}'$
is not reduced then $G_{\un{w'}_{\un{\g}}} \circ K_{\un{\g}}' \circ
\overline{G}_{\un{w'}_{\un{\g}}}$ is an endomorphism of $x$ of
negative degree, and $\End_{\SD^{\ge x}}(x) = R$ is zero in negative
degree. Thus 
$$
\langle LL_{\un{e^1}}, LL_{\un{e^2}} \rangle \id_x = 
G_{\un{w'}^1} \circ K \circ
\overline{G}_{\un{w'}^2}=\sum_{\un{\e}_\phi \subset \un{\g} \subset
  \un{\e}, \atop \un{w}_{\un{\g}}' \text{is a reduced expression for
    $x$.}} a_{\un{\g}} \id_x.$$

We now explain how the right hand side can be computed in the nil
Hecke ring. Consider the homomorphism of $Q$-algebras 
\[p: M \to NH, \quad A_s \mapsto D_s,B_s \mapsto 1.\]
If $\un{\e}_\phi \subset \un{\g} \subset
  \un{\e}$ then $\un{w}_{\un{\g}}'$ is a subexpression of both $\un{w}'^1$
  and $\un{w}'^2$. In particular $(\un{w}_\g')_* \le \un{w}'^1_{*} =
  \un{w}'^2_* = x$ and hence
\[
p(F_{\un{\g}}) \in NH_{\le x} := \bigoplus_{y \le x} RD_y.
\]
Furthermore, in the quotient $NH_{\le x}/NH_{< x}$ (
$NH_{< x} := \bigoplus_{y < x} RD_y$) we have
\begin{equation}
  \label{eq:1}
  p(F_{\un{\g}}) =
  \begin{cases}
    D_x & \text{if $\un{w}_{\un{\g}}'$ is a reduced expression for $x$,}\\
0 & \text{otherwise}.
  \end{cases}
\end{equation}
Hence applying $p$ to \eqref{eq:abexpansion} we obtain
\[
p(F) = \sum_{\un{\e}_\phi \subset \un{\g} \subset
  \un{\e}, \atop \un{w}_{\un{\g}}' \text{is a reduced expression for
    $x$.}} a_{\un{\g}} D_x + NH_{< x}
= \langle LL_{\un{e^1}}, LL_{\un{e^2}} \rangle D_x + NH_{<x}.
\]
From the definitions we see $p(F) = f(\un{e}^1, \un{e}^2)$ and the result follows.
\end{proof}

\section{Examples}

\subsection{}
This example comes from \cite[Example 8.3.1]{KS}. 

Let $W=S_8$. Let 
\begin{gather*} \un{w}=(s_1, s_3, s_2, s_4, s_3, s_5, s_4, s_3, s_2, s_1, s_6, s_7, s_6, s_5, s_4, s_3), \\\un{e}=(U1, U1, U0, U1, U1, U1, U1, U1, U0, D0, U0, U1, U0, D0, D0, D0).
\end{gather*} 
Then $$f(\un{e}, \un{e})=D_1 D_3 \a_2 D_4 D_3 D_5 D_4 D_3 \a_2 D_1 \a_6 D_7 \a_6 D_5 D_4 D_3=2 D_x,$$ where $x=s_1 s_3 s_4 s_3 s_5 s_4 s_3 s_7$.

Notice that $\un{e}$ is the unique $01$-sequence of defect 0 with
$(\un{w}^{\un{e}})_\bullet=x$. Also note that no $D1$ appears in
$\un{e}$. Thus by Theorem \ref{thm:main}, $\langle LL_{\un{w},
  \un{e}}, LL_{\un{w}, \un{e}} \rangle=2$. This tells that the
character of $b_w$ in the Hecke algebra is not that predicted by Kazhdan-Lusztig theory if
the characteristic of $\Bbbk$ is 2.

In fact, the reducibility of the characteristic cycle shown in \cite[Example 8.3.1]{KS} is implied by the
above calculation, using the results of \cite{VW}.

\subsection{}
The following two examples were discovered by Braden
  \cite[Appendix A]{W1}.

\subsubsection{}
Let $W=S_8$. Let 
\begin{gather*}
  \un{w}=(s_3,s_2,s_1,s_5,s_4,s_3,s_2,s_6,s_5,s_4,s_3,s_7,s_6,s_5) \\
\un{e}=(U1, U1,U0, U1, U0, U1, D0, U1, U1, U0, D0, U0, D0, D0)
\end{gather*}
Then $$f(\un{e}, \un{e})=
D_3D_2\a_1D_5\a_4D_3D_2D_6D_5\a_4D_3\a_7D_6D_5 =2 D_x,$$ where
$x=s_2s_3s_2s_5s_6s_5$. Again $\un{e}$ is the unique defect zero
subexpression of $\un{w}$ such that $(\un{w}^{\un{e}})_\bullet = x$.

\subsubsection{}
We repeat the example in $D_4$ considered in \S \ref{ifeg}, this time
using our formula.
Let $W = D_4$ with $S = \{ s, t, u, v\}$ and such that $su = us, sv =
vs, uv = vu$. Let
\begin{gather}
\un{w} = (s,u,v,t,s,u,v),
\end{gather}
Then there are three subexpressions of $\un{w}$ for $x = suv$ of defect zero:
\begin{gather*}
\un{e}^1 = (U0, U1, U1, U0, U1, D0, D0)\\
\un{e}^2 = (U1, U0, U1, U0, D0, U1, D0)\\
\un{e}^3 = (U1, U1, U0, U0, D0, D0, U1)
\end{gather*}
we have
\begin{gather*}
  f(\un{e}^1,\un{e}^1) = \a_s D_uD_v \a_t D_sD_uD_v = 0\\
 f(\un{e}^1,\un{e}^2) = D_v \a_t D_sD_uD_v = -D_x.
\end{gather*}
By symmetry, the matrix $(\langle LL_{e^i}, LL_{e^j}
\rangle)_{1 \le i, j \le 3}$ is given by
\[
\left ( \begin{matrix} 0 & -1 & -1 \\ -1 & 0 & -1 \\ -1 & -1 &
    0 \end{matrix} \right ).
\]

\subsection{}
This example was discovered by the second author \cite{W3}.

Let $W = S_{12} = \langle s_1, s_2, \dots, s_9, s_a, s_b
\rangle$. Consider $\un{w}$ and $\un{e}$ as follows:
\begin{gather*}
(s_1,s_2,s_1,s_3,s_2,s_1,s_5,s_4,s_6,s_5,s_4,s_3,s_7,s_6,s_5,s_4,s_3,s_8,s_7,s_9,s_8,s_7,s_6,s_5,s_a,s_
b,s_a,s_9,s_8,s_7)\\
\scalebox{0.85}{
(U1 U1 U1 U1 U1 U1 U1 U0 U1 U1 U1 U1 U1 U1 U1 U1 U1 U1 U1 U1 U1 U1 U0
D0 U0 U1 U0 D0 D0 D0)}
\end{gather*}
Then $\un{e}$ is the unique defect zero subexpression for
\[
x = s_1s_2s_1s_3s_2s_1s_5s_6s_5s_4s_3s_7s_6s_5s_4s_3s_8s_7s_9s_8s_7s_{b}.
\]
Then $$f(\un{e}, \un{e})=
D_{1213215}\a_4 D_{65437654387987} \a_6 D_5 \a_a D_b \a_a D_{987} =2 D_x.$$

The significance of this example is that it is probably the first
example in type $A$ where $\un{w}_\bullet$ and $x$ lie in the same
two-sided cell.




\end{document}